\documentclass[10pt]{article}
\usepackage{amsmath}
\numberwithin{equation}{section}
\usepackage{amsfonts}
\usepackage{amssymb}
\usepackage{graphicx}
\usepackage{mathrsfs}
\usepackage{xcolor}
\usepackage{verbatim}
\usepackage{mathrsfs}
\usepackage[body={15.5cm,21cm}, top=3cm]{geometry}
\usepackage{paralist}
\usepackage{ntheorem}
\usepackage{appendix}

\allowdisplaybreaks[4]
\usepackage{hyperref}
\hypersetup{colorlinks=true,
linkcolor=blue,
anchorcolor=blue,
citecolor=blue}
\providecommand{\U}[1]{\protect \rule{.1in}{.1in}}
\newtheorem{theorem}{Theorem}[section]

\newtheorem{corollary}[theorem]{Corollary}

\newtheorem{definition}[theorem]{Definition}

\newtheorem{lemma}[theorem]{Lemma}

\newtheorem{remark}[theorem]{Remark}

\theoremstyle{empty}

\newenvironment{proof}[1][Proof]{\noindent \textbf{#1.} }{\  \rule{0.5em}{0.5em}}
\allowdisplaybreaks[2]
\begin{document}
\title{Turnpike properties for stochastic backward linear-quadratic optimal problems}
\author{Yuyang Chen\thanks{School of Mathematical Sciences, Shanghai Jiao Tong University, China (cyy0032@sjtu.edu.cn)}
\and
Peng Luo \thanks{School of Mathematical Sciences, Shanghai Jiao Tong University, China (peng.luo@sjtu.edu.cn). Financial support from the National Natural Science Foundation of China (Grant No. 12101400) is gratefully acknowledged.}}

\maketitle
\begin{abstract}
This paper deals with the long time behavior of the optimal solution of stochastic backward linear-quadratic optimal control problem over the finite time horizon. Both weak and strong turnpike properties are established under appropriate conditions, including stabilizability condition. The key ingredients are to formulate the corresponding  static optimization problem and determine the correction processes. However, our techniques are quite different from stochastic (forward) linear-quadratic case.
\end{abstract}

\textbf{Key words}: long time behavior; stabilizability; turnpike property; stochastic backward linear-quadratic optimal control problem.

\textbf{MSC-classification}: 49N10, 93D23, 93E15, 93E20.

\section{Introduction}
Let $(\Omega,\mathscr{F},\mathbb{P})$ be a complete probability space on which a standard one-dimensional Brownian motion $W=\{W(t);t\geqslant0\}$ is defined and denote by $\mathbb{F}=\left\{\mathscr{F}_t\right\}_{t \geqslant 0}$ the usual augmentation of the natural filtration generated by $W$. Consider the following controlled linear backward stochastic differential equations (BSDE, for short)
\begin{equation}\label{BSDE}
\left\{\begin{array}{l}
dY(t)=[AY(t)+Bu(t)+CZ(t)+b]dt+Z(t)dW_t,\quad t\in[0,T],\\
Y(T)=\xi,
\end{array}\right.
\end{equation}
and the following general quadratic cost functional
\begin{equation}\label{cost}
\begin{aligned}
J_{T}(\xi;u(\cdot))=\frac{1}{2}\mathbb{E}&\int_{0}^{T}[\langle QY(t),Y(t)\rangle+\langle NZ(t),Z(t)\rangle+\langle Ru(t), u(t)\rangle\\
&+2\langle q,Y(t)\rangle+2\langle \mathbf{n},Z(t)\rangle+2\langle r,u(t)\rangle]dt
\end{aligned}
\end{equation}
where $A,C\in\mathbb{R}^{n\times n},~B\in\mathbb{R}^{n\times m},~Q,N\in\mathbb{S}^{n},~R\in\mathbb{S}^{m},~b,q,\mathbf{n}\in\mathbb{R}^{n}$, and $r\in\mathbb{R}^{m}$ are constant matrices or vectors with $\mathbb{S}^{n}$ being the set of all $(n\times n)$ symmetric matrices, and $\xi\in L^{\infty}_{\mathcal{F}_{T}}(\Omega,\mathbb{R}^{n})$, i.e. $\xi\in\mathcal{F}_{T}$ and is bounded. The classical stochastic backward linear-quadratic (BLQ, for short) optimal control problem over the finite time horizon $[0,T]$ is to find a control $\bar{u}_{T}(\cdot)$ from the space
\begin{equation}\label{U}
\mathscr{U}_{m}[0,T]=\left\{u:[0,T]\times\Omega\rightarrow\mathbb{R}^{m}|u\text{ is }\mathbb{F}\text{-progressively measurable and }\mathbb{E}\int_{0}^{T}|u(t)|^{2} d t<\infty\right\}
\end{equation}
such that the cost functional \eqref{cost} is minimized over $\mathscr{U}_{m}[0,T]$, for a given terminal state $\xi\in L^{\infty}_{\mathcal{F}_{T}}(\Omega,\mathbb{R}^{n})$. More precisely, it can be stated as follows.\\
\\
$\mathbf{Problem~(BLQ)_{T}}$. For any given terminal state $\xi\in L^{\infty}_{\mathcal{F}_{T}}(\Omega,\mathbb{R}^{n})$, find a control $\bar{u}_{T}(\cdot)\in\mathscr{U}_{m}[0,T]$ such that
\begin{equation}\label{BLQ}
J_{T}\left(\xi;\bar{u}_{T}(\cdot)\right)=\inf_{u(\cdot)\in\mathscr{U}_{m}[0,T]}J_{T}(\xi;u(\cdot))\equiv V_{T}(\xi).
\end{equation}
The process $\bar{u}_{T}(\cdot)$ in \eqref{BLQ} (if exists) is called an open-loop optimal control of Problem (BLQ)$_{T}$ for the terminal state $\xi$, the corresponding state process $\left(\bar{Y}_{T}(\cdot),\bar{Z}_{T}(\cdot)\right)$ is called an open-loop optimal state process, $\left(\bar{Y}_{T}(\cdot),\bar{Z}_{T}(\cdot),\bar{u}_{T}(\cdot)\right)$ is called an open-loop optimal pair, and $V_{T}(\cdot)$ is called the value function of Problem (BLQ)$_{T}$.

The stochastic BLQ optimal control problem was first proposed by Dokuchaev and Zhou \cite{Dokuchaev and Zhou 1999}. By applying the completion-of-squares method and the decoupling method, Lim and Zhou \cite{Lim and Zhou 2001} solved the stochastic BLQ problem with deterministic coefficients and obtained the optimal control represented by the adjoint process. Since then, the stochastic BLQ optimal control problem has attracted much interest. For example, Zhang \cite{Zhang 2011} discussed the stochastic BLQ optimal control problem with random jumps; Wang et al. \cite{Wang et al. 2012} solved the stochastic BLQ optimal control problem under partial information; Du et al. \cite{Du et al. 2018} and Li et al. \cite{Li et al. 2019} studied stochastic mean-field BLQ optimal control problem; Bi et al. \cite{Bi et al. 2020} obtained a theory of optimal control for controllable stochastic linear systems; Sun et al. studied indefinite stochastic BLQ optimal control problem in \cite{Sun et al. 2021}. Recently, Sun and Wang \cite{Sun and Wang 2021} solved the stochastic BLQ optimal control problem with random coefficients, which is more difficult than the forward case. We also refer readers to the wide application of stochastic BLQ optimal control problems, such as the quadratic hedging problem \cite{Lim and Zhou 2001}, the non-zero sum differential game \cite{Zhang 2011, Wang  et al. 2018}, the pension fund optimization problem \cite{Huang et al. 2009}, the Sackelberg differential game \cite{Feng et al. 2022}, etc.

For the stochastic BLQ optimal control problem in finite time horizon, one can obtain its optimal control by constructing the FBSDE with optimal condition and represent its optimal pair with adjoint process, differential Riccati equation and related BSDE. However, to the best of our knowledge, there is no literature on the long time behavior of the optimal pair of Problem(BLQ)$_{T}$, which is the main aim of this paper. More precisely, we want to find out if there are positive constants $K,\mu$ independent of $T$, such that it holds
\begin{equation}\label{weak}
\left|\mathbb{E}\left[\bar{Y}_{T}(t)-y^{*}\right]\right|+\left|\mathbb{E}\left[\bar{u}_{T}(t)-u^{*}\right]\right| \leqslant K\left[e^{-\mu t}+e^{-\mu(T-t)}\right], \quad \forall t\in[0,T]
\end{equation}
for $(y^{*},z^{*},u^{*})\in\mathbb{R}^{n}\times\mathbb{R}^{n}\times\mathbb{R}^{m}$, which is the minimizer of 
\begin{equation}\label{static}
F(y,z,u)=\frac{1}{2}[\langle Qy,y\rangle+\langle Nz,z\rangle+\langle Ru,u\rangle+2\langle q,y\rangle+2\langle \mathbf{n},z\rangle+2\langle r,u\rangle+\langle \Sigma^{-1}z,z\rangle]
\end{equation}
under $Ay+Bu+Cz+b=0$, where $\Sigma$ is a positive definite solution to the following algebraic Riccati equation (ARE for short)
$$
\Sigma A^{\top}+A\Sigma+\Sigma Q\Sigma-BR^{-1}B^{\top}-C(I_{n}+\Sigma N)^{-1}\Sigma C^{\top}=0.
$$
This is called the \textbf{(weak) turnpike property} of Problem(BLQ)$_{T}$. 

The study of the turnpike phenomenon can be traced back to \cite{Neumann 1945}, arising from the problem of optimal economic growth. In 1958, Dorfman et al. \cite{Dorfman et al. 1958} coined the name ``turnpike". This phenomenon has been studied for a long time because it originates not only in economics \cite{Makarov and Rubinov 2012,McKenzie 1976,Rubinov 1984}, but also in engineering \cite{Zaslavski 2005,Zaslavski and Leizarowitz 1998}, game theory \cite{Kolokoltsov and Yang 2012,Zaslavski 2011}, thermodynamic equilibrium theory of materials \cite{Marcus and Zaslavski 2002,Coleman et al. 1991} and so on. Some recent results can be found in \cite{Breiten and Pfeiffer 2020,Faulwasser and Grune 2022,Gugat and Lazar 2023}. Nevertheless, all these results were concerned with turnpike properties for deterministic optimal problems whose state equations were ODEs. In 2022, Sun et al. \cite{Sun et al. 2022} creatively proposed turnpike properties for stochastic linear-quadratic optimal control problems, which were totally different from the deterministic ones. Soon after, Sun and Yong \cite{Sun and Yong 2022} extended this property to the mean-field stochastic linear-quadratic optimal control problems and proved the so-called strong turnpike property. In our case, the \textbf{strong turnpike property} means 
\begin{equation}\label{strong}
\mathbb{E}\left[|\bar{Y}_{T}(t)-\bar{Y}^{*}_{T}(t)|^{2}\right]+\mathbb{E}\left[|\bar{u}_{T}(t)-\bar{u}^{*}_{T}(t)|^{2}\right]\leqslant K\left[e^{-\zeta t}+e^{-\zeta(T-t)}\right], \quad \forall t\in[0,T]
\end{equation}
holds for some constants $K,\zeta>0$ independent of $T$ and processes $\bar{Y}^{*}_{T}(\cdot),\bar{u}^{*}_{T}(\cdot)$ determined explicitly.

In this paper, we study both weak and strong turnpike properties for the stochastic BLQ optimal control problem in the finite time horizon. We should emphasize that in \cite{Sun et al. 2022} and \cite{Sun and Yong 2022}, their results were based on the stability for controlled SDEs in an infinite time horizon. Unfortunately, there are no existing stability results for controlled BSDEs in the infinite time horizon. Therefore, we need to develop a different approach to deal with stochastic BLQ optimal control setting. To start with, instead of dealing directly with the algebraic Riccati equation associated with the stochastic BLQ optimal control problem, we consider the inverse of this algebraic Riccati equation. The key observation is that the inverse is associated with a forward LQ problem, for which, we can introduce the stability of the related controlled SDEs. We further consider a family of differential Riccati equations. We establish the well-posedness and study the long time behavior. In particular, we show that the difference between the solutions of the differential Riccati equation and related algebraic Riccati equation decays exponentially by imposing stabilizability condition for related controlled SDE. Relying on this exponential decay property, we could establish both weak and strong turnpike properties for the stochastic BLQ optimal control problem. The key ingredients are to find the correct formulation of the corresponding static optimization problem and determine the correction processes. However, due to the randomness of the terminal state which is critical for stochastic BLQ, the related adjoint equation is indeed BSDE instead of ODE, which brings additional difficulty. We overcome this difficulty by some delicate estimates based on BSDE theory.

The rest of this paper is organized as follows. Preliminaries including notations, definitions and some lemmas are presented in Section 2. Section 3 presents the main results of this paper and the proofs of the main results are given in Section 4.

\section{Preliminaries}
We begin with some notations that will be frequently used in the sequel. Let $\mathbb{R}^{n \times m}$ be the space of $(n \times m)$ real matrices equipped with the Frobenius inner product
$$
\langle M,N\rangle=tr\left(M^{\top} N\right),\quad M,N\in\mathbb{R}^{n\times m}
$$
where $M^{\top}$ denotes the transpose of $M$ and $tr\left(M^{\top} N\right)$ is the trace of $M^{\top} N$. The norm induced by the Frobenius inner product is denoted by $|\cdot|$. For a subset $\mathbb{H}$ of $\mathbb{R}^{n \times m}$, we denote by $C([0, T] ; \mathbb{H})$ the space of continuous functions from $[0, T]$ mapping into $\mathbb{H}$, $C([0, \infty) ; \mathbb{H})$ the space of continuous functions from $[0, \infty)$ mapping into $\mathbb{H}$, and by $L^{\infty}(0, T ; \mathbb{H})$ the space of Lebesgue measurable, essentially bounded functions from $[0, T]$ mapping into $\mathbb{H}$. Let $\mathbb{S}^n$ be the subspace of $\mathbb{R}^{n \times n}$ consisting of symmetric matrices and $\mathbb{S}_{+}^n$ the subset of $\mathbb{S}^n$ consisting of positive definite matrices. For $\mathbb{S}^n$-valued functions $M(\cdot)$ and $N(\cdot)$, we write $M(\cdot) \geq N(\cdot)$ (respectively, $M(\cdot)>N(\cdot)$ ) if $M(\cdot)-N(\cdot)$ is positive semidefinite (respectively, positive definite) almost everywhere with respect to the Lebesgue measure. For a positive definite matrix $M$, we denote its largest and smallest eigenvalue by $\lambda_{\max}(M)$ and $\lambda_{\min}(M)$. The identity matrix of size $n$ is denoted by $I_n$, and the zero matrix of size $n\times m$ is denoted by $0_{n\times m}$.

\subsection{Classical results of stochastic BLQ problem}
We recall some classical results of stochastic BLQ problem in the finite time horizon, as a preparation for our study. Consider the state equation
\begin{equation}\label{Y}
\left\{\begin{array}{l}
d\mathbf{Y}(t)=[\mathbf{A}(t)\mathbf{Y}(t)+\mathbf{B}(t)\mathbf{u}(t)+\mathbf{C}(t)\mathbf{Z}(t)+\mathbf{b}(t)]dt+\mathbf{Z}(t)dW_t,\quad t\in[0,T],\\
\mathbf{Y}(T)=\xi\in L^{\infty}_{\mathcal{F}_{T}}(\Omega,\mathbb{R}^{n}),
\end{array}\right.
\end{equation}
with the cost functional
\begin{equation}\label{J}
\begin{aligned}
\mathbf{J}(\xi;\mathbf{u}(\cdot))=\frac{1}{2}\mathbb{E}&\Bigg\{\langle\mathbf{G}\mathbf{Y}(0),\mathbf{Y}(0)\rangle+2\langle\mathbf{g},\mathbf{Y}(0)\rangle+\int_{0}^{T}[\langle \mathbf{Q}(t)\mathbf{Y}(t),\mathbf{Y}(t)\rangle+\langle\mathbf{N}(t)\mathbf{Z}(t),\mathbf{Z}(t)\rangle\\
&+\langle\mathbf{R}(t)\mathbf{u}(t), \mathbf{u}(t)\rangle+2\langle\mathbf{q}(t),\mathbf{Y}(t)\rangle+2\langle\mathbf{n}(t),\mathbf{Z}(t)\rangle+2\langle\mathbf{r}(t),\mathbf{u}(t)\rangle]dt\Bigg\}
\end{aligned}
\end{equation}
where in \eqref{Y}, the coefficients satisfy
$$
\mathbf{A}(\cdot),\mathbf{C}(\cdot)\in L^{\infty}\left(0,T ;\mathbb{R}^{n\times n}\right),\quad\mathbf{B}(\cdot)\in L^{\infty}\left(0,T ;\mathbb{R}^{n\times m}\right),\quad \mathbf{b}(\cdot)\in L^{\infty}\left(0,T;\mathbb{R}^{n}\right),
$$
and in \eqref{J}, the weighting coefficients satisfy
$$
\begin{aligned}
&\mathbf{G}\in\mathbb{S}^{n},\quad\mathbf{Q}(\cdot),\mathbf{N}(\cdot)\in L^{\infty}\left(0,T;\mathbb{S}^{n}\right),\quad\mathbf{R}(\cdot)\in L^{\infty}\left(0,T;\mathbb{S}^{m}\right),\\
&\mathbf{g}\in\mathbb{R}^{n},\quad\mathbf{q}(\cdot),\mathbf{n}(\cdot)\in L^{\infty}\left(0,T;\mathbb{R}^{n}\right),\quad\mathbf{r}(\cdot)\in L^{\infty}\left(0,T;\mathbb{R}^{m}\right).
\end{aligned}
$$
The standard stochastic BLQ optimal control problem on $[0,T]$ can be stated as follows.\\
\\
$\mathbf{Problem~(\mathbf{BLQ})_{[0,T]}}$. For a given terminal state $\xi\in L^{\infty}_{\mathcal{F}_{T}}(\Omega,\mathbb{R}^{n})$, find a control $\bar{\mathbf{u}}(\cdot)\in\mathscr{U}_{m}[0,T]$ such that
\begin{equation}\label{V}
\mathbf{J}(\xi;\bar{\mathbf{u}}(\cdot))=\inf_{\mathbf{u}(\cdot)\in\mathscr{U}_{m}[0,T]}\mathbf{J}(\xi;\mathbf{u}(\cdot))\equiv\mathbf{V}(\xi).
\end{equation}
The process $\bar{\mathbf{u}}(\cdot)$ (if exists) in \eqref{V} is called an (open-loop) optimal control for the terminal state $\xi$, and $\mathbf{V}(\xi)$ is called the value of Problem (BLQ)$_{[0,T]}$ at $\xi$.

The following lemma summarizes a few results for Problem (BLQ)$_{[0,T]}$. For more details, we refer to \cite{Lim and Zhou 2001,Sun and Wang 2021,Sun et al. 2021}.
\begin{lemma}\label{le-1}
Suppose that for some constant $\delta>0$ and all $t\in[0,T]$, it holds
$$
\mathbf{R}(t)\geqslant\delta I,\quad\mathbf{Q}(t),\mathbf{N}(t)\geqslant 0,\quad\mathbf{G}\geqslant 0.
$$
Then, the following hold:

(i) For every terminal state $\xi\in L^{\infty}_{\mathcal{F}_{T}}(\Omega,\mathbb{R}^{n})$, Problem (BLQ)$_{[0,T]}$ has a unique open-loop optimal control.

(ii) A pair $(\bar{\mathbf{Y}}(\cdot),\bar{\mathbf{Z}}(\cdot),\bar{\mathbf{u}}(\cdot))$ is an open-loop optimal pair of Problem (BLQ)$_{[0,T]}$ with the terminal state $\xi$ if and only if there exists an adapted process $\bar{\mathbf{X}}(\cdot)$ such that
$$
\left\{\begin{array}{l}
d\bar{\mathbf{Y}}(t)=[\mathbf{A}(t)\bar{\mathbf{Y}}(t)+\mathbf{B}(t)\bar{\mathbf{u}}(t)+\mathbf{C}(t)\bar{\mathbf{Z}}(t)+\mathbf{b}(t)]dt+\bar{\mathbf{Z}}(t)dW_t,\\
d\bar{\mathbf{X}}(t)=\left[-\mathbf{A}(t)^{\top}\bar{\mathbf{X}}(t)+\mathbf{Q}(t)\bar{\mathbf{Y}}(t)+\mathbf{q}(t)\right]dt
+\left[-\mathbf{C}(t)^{\top}\bar{\mathbf{X}}(t)+\mathbf{N}(t)\bar{\mathbf{Z}}(t)+\mathbf{n}(t)\right]dW_t,\quad t\in[0,T],\\
\bar{\mathbf{Y}}(T)=\xi,\quad\bar{\mathbf{X}}(0)=\mathbf{G}\bar{\mathbf{Y}}(0)+\mathbf{g},
\end{array}\right.
$$
and the following condition holds:
$$
-\mathbf{B}(t)^{\top}\bar{\mathbf{X}}(t)+\mathbf{R}(t)\bar{\mathbf{u}}(t)+\mathbf{r}(t)=0, \quad a.e.~t\in[0,T],~a.s.
$$

(iii) The differential Riccati equation
$$
\left\{\begin{array}{l}
\dot{\mathbf{\Sigma}}(t)-\mathbf{\Sigma}(t)\mathbf{A}(t)^{\top}-\mathbf{A}(t)\mathbf{\Sigma}(t)-\mathbf{\Sigma}(t)\mathbf{Q}(t)\mathbf{\Sigma}(t)+\mathbf{B}(t)\mathbf{R}(t)^{-1}\mathbf{B}(t)^{\top}\\
\quad+\mathbf{C}(t)\left(I_{n}+\mathbf{\Sigma}(t)\mathbf{N}(t)\right)^{-1}\mathbf{\Sigma}(t)\mathbf{C}(t)^{\top}=0,\quad t\in[0,T],\\
\mathbf{\Sigma}(T)=0
\end{array}\right.
$$
admits a unique positive semidefinite solution $\mathbf{\Sigma}(\cdot)\in C\left([0,T];\mathbb{S}^{n}\right)$. In particular, if
$$
\mathbf{Q}(t),\mathbf{N}(t)>0,
$$
then $\mathbf{\Sigma}(t)>0$ for all $t\in[0,T)$.

(iv) The unique open-loop optimal pair $(\bar{\mathbf{Y}}(\cdot),\bar{\mathbf{Z}}(\cdot),\bar{\mathbf{u}}(\cdot))$ for the terminal state $\xi$ is given by
$$
\left\{\begin{array}{l}
\bar{\mathbf{Y}}(t)=-\mathbf{\Sigma}(t)\bar{\mathbf{X}}(t)-\boldsymbol{\varphi}(t),\\
\bar{\mathbf{Z}}(t)=\left(I_{n}+\mathbf{\Sigma}(t)\mathbf{N}(t)\right)^{-1}\left(\mathbf{\Sigma}(t)\mathbf{C}(t)^{\top}\bar{\mathbf{X}}(t)-\mathbf{\Sigma}(t)\mathbf{n}(t)-\beta(t)\right),\\
\bar{\mathbf{u}}(t)=\mathbf{R}(t)^{-1}\left(\mathbf{B}(t)^{\top}\bar{\mathbf{X}}(t)-\mathbf{r}(t)\right),
\end{array}\right.
$$
where $(\boldsymbol{\varphi},\beta)$ is the solution of the following BSDE
$$
\left\{\begin{array}{l}
\begin{aligned}
d\boldsymbol{\varphi}(t)=&\left[\left(\mathbf{A}(t)+\mathbf{\Sigma}(t)\mathbf{Q}(t)\right)\boldsymbol{\varphi}(t)+\mathbf{C}(t)\left(I_{n}+\mathbf{\Sigma}(t)\mathbf{N}(t)\right)^{-1}\beta(t)-\mathbf{\Sigma}(t)\mathbf{q}(t)-\mathbf{b}(t)\right.\\
&\left.+\mathbf{C}(t)\left(I_{n}+\mathbf{\Sigma}(t)\mathbf{N}(t)\right)^{-1}\mathbf{\Sigma}(t)\mathbf{n}(t)+\mathbf{B}(t)\mathbf{R}(t)^{-1}\mathbf{r}(t)\right]dt+\beta(t)dW_{t},\quad t\in[0,T],
\end{aligned}\\
\boldsymbol{\varphi}(T)=-\xi.
\end{array}\right.
$$

(v) The value function is given by
$$
\begin{aligned}
\mathbf{V}(\xi)=&\frac{1}{2}\mathbb{E}\left[\langle \left(I_{n}+\mathbf{G}\mathbf{\Sigma}(0)\right)^{-1}\mathbf{G}\boldsymbol{\varphi}(0),\boldsymbol{\varphi}(0)\rangle-2\langle \mathbf{g},\left(I_{n}+\mathbf{\Sigma}(0)\mathbf{G}\right)^{-1}\boldsymbol{\varphi}(0)\rangle-\langle\left(I_{n}+\mathbf{\Sigma}(0)\mathbf{G}\right)^{-1}\mathbf{\Sigma}(0)\mathbf{g},\mathbf{g}\rangle\right.\\
&~\left.+\int_{0}^{T}\left(\langle\mathbf{Q}(t)\boldsymbol{\varphi}(t),\boldsymbol{\varphi}(t)\rangle-2\langle\mathbf{q},\boldsymbol{\varphi}(t)\rangle
-\langle\left(I_{n}+\mathbf{\Sigma}(t)\mathbf{N}(t)\right)^{-1}\mathbf{\Sigma}(t)\mathbf{n},\mathbf{n}\rangle-\langle\mathbf{R}(t)^{-1}\mathbf{r},\mathbf{r}\rangle\right.\right.\\
&\qquad+\left.\left.\langle\left(I_{n}+\mathbf{N}(t)\mathbf{\Sigma}(t)\right)^{-1}\mathbf{N}(t)\beta(t),\beta(t)\rangle-\langle \mathbf{n},\left(I_{n}+\mathbf{\Sigma}(t)\mathbf{N}(t)\right)^{-1}\beta(t)\rangle\right)dt\right].
\end{aligned}
$$
\end{lemma}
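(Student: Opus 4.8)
The plan is to establish the five assertions in the order stated, since each feeds the next, following the classical treatment in \cite{Lim and Zhou 2001,Sun and Wang 2021,Sun et al. 2021}. \emph{For (i)--(ii)}, I would first note that linearity of \eqref{Y} together with well-posedness of linear BSDEs with bounded coefficients makes $\mathbf{u}(\cdot)\mapsto(\mathbf{Y}(\cdot),\mathbf{Z}(\cdot))$ an affine map, so $\mathbf{u}(\cdot)\mapsto\mathbf{J}(\xi;\mathbf{u}(\cdot))$ is a quadratic functional on the Hilbert space $\mathscr{U}_{m}[0,T]$. Writing $\mathbf{u}=\bar{\mathbf{u}}+\varepsilon v$ and expanding in $\varepsilon$, the coefficient of $\varepsilon^{2}$ is the cost of the homogeneous problem (zero terminal state, zero affine data) at $v$, which under $\mathbf{R}\ge\delta I$ and $\mathbf{Q},\mathbf{N},\mathbf{G}\ge0$ is $\ge\frac{\delta}{2}\mathbb{E}\int_{0}^{T}|v|^{2}dt$; hence $\mathbf{J}$ is strictly convex and coercive and has a unique minimizer, giving (i). For (ii), I would set the $\varepsilon^{1}$-coefficient to zero, introduce the adjoint process $\bar{\mathbf{X}}(\cdot)$ as the solution of the forward linear SDE dual to the backward state equation, and use the Itô product rule on $\langle\bar{\mathbf{X}},\mathbf{Y}^{v}\rangle$ (with $\mathbf{Y}^{v}$ the state variation corresponding to $v$) to rewrite the first variation as $\mathbb{E}\int_{0}^{T}\langle-\mathbf{B}^{\top}\bar{\mathbf{X}}+\mathbf{R}\bar{\mathbf{u}}+\mathbf{r},v\rangle dt$; the boundary contributions of that integration by parts fix the coupling $\bar{\mathbf{X}}(0)=\mathbf{G}\bar{\mathbf{Y}}(0)+\mathbf{g}$, and arbitrariness of $v$ yields the stationarity condition. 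The converse (that any solution of the optimality system is optimal) is immediate from convexity.

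\emph{For (iii)}, the right-hand side of the Riccati ODE is locally Lipschitz in $\mathbf{\Sigma}$ on the open set where $I_{n}+\mathbf{\Sigma}\mathbf{N}(t)$ is invertible, so running backward from $\mathbf{\Sigma}(T)=0$ there is a unique maximal local solution, and the work is to make it global and nonnegative. I would use that $\mathbf{\Sigma}\ge0$, $\mathbf{N}\ge0$ imply $(I_{n}+\mathbf{\Sigma}\mathbf{N})^{-1}\mathbf{\Sigma}=\mathbf{\Sigma}^{1/2}(I_{n}+\mathbf{\Sigma}^{1/2}\mathbf{N}\mathbf{\Sigma}^{1/2})^{-1}\mathbf{\Sigma}^{1/2}\ge0$ and $\le\mathbf{\Sigma}$: reading the equation backward from $\mathbf{\Sigma}(T)=0$, an invariance argument for the positive-semidefinite cone (on a kernel direction $v$ of $\mathbf{\Sigma}$ only the $\mathbf{B}\mathbf{R}^{-1}\mathbf{B}^{\top}$ and $\mathbf{C}(I_{n}+\mathbf{\Sigma}\mathbf{N})^{-1}\mathbf{\Sigma}\mathbf{C}^{\top}$ terms survive and both are $\ge0$) gives $\mathbf{\Sigma}(t)\ge0$ wherever defined; then, dropping the nonpositive $-\mathbf{\Sigma}\mathbf{Q}\mathbf{\Sigma}$ term and using boundedness of the $\mathbf{C}$-term, one gets $|\dot{\mathbf{\Sigma}}|\le c(1+|\mathbf{\Sigma}|)$, ruling out finite-time blow-up and extending the solution continuously to $[0,T]$. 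When $\mathbf{Q},\mathbf{N}>0$, a standard strict-comparison (strict monotonicity) argument upgrades this to $\mathbf{\Sigma}(t)>0$ on $[0,T)$.

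\emph{For (iv)--(v)}, with $\mathbf{\Sigma}(\cdot)$ in hand I would first solve the linear BSDE for $(\boldsymbol{\varphi},\beta)$ with $\boldsymbol{\varphi}(T)=-\xi$ (well-posed: affine generator, bounded coefficients, $\xi\in L^{\infty}_{\mathcal{F}_{T}}$), then let $\bar{\mathbf{X}}(\cdot)$ solve the forward linear SDE obtained by substituting the claimed formulas for $(\bar{\mathbf{Y}},\bar{\mathbf{Z}},\bar{\mathbf{u}})$ into the adjoint equation of (ii), with $\bar{\mathbf{X}}(0)=(I_{n}+\mathbf{G}\mathbf{\Sigma}(0))^{-1}(\mathbf{g}-\mathbf{G}\boldsymbol{\varphi}(0))$ (the matrix $I_{n}+\mathbf{G}\mathbf{\Sigma}(0)$ being invertible since $\mathbf{G},\mathbf{\Sigma}(0)\ge0$). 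Applying Itô's formula to $-\mathbf{\Sigma}(t)\bar{\mathbf{X}}(t)-\boldsymbol{\varphi}(t)$ and using the Riccati equation to cancel the drift and the $\boldsymbol{\varphi}$-BSDE to absorb the affine terms, one checks that $\bar{\mathbf{Y}}:=-\mathbf{\Sigma}\bar{\mathbf{X}}-\boldsymbol{\varphi}$ solves \eqref{Y} with the stated $\bar{\mathbf{Z}}$ and $\bar{\mathbf{Y}}(T)=\xi$; since the stationarity condition holds by construction, (ii) identifies the triple as the optimal pair, proving (iv). For (v) I would substitute the optimal pair into \eqref{J}, apply the Itô product rule to $\langle\bar{\mathbf{X}},\bar{\mathbf{Y}}\rangle$ and to the quadratic forms $\langle\mathbf{\Sigma}\bar{\mathbf{X}},\bar{\mathbf{X}}\rangle$ and $\langle\bar{\mathbf{X}},\boldsymbol{\varphi}\rangle$, integrate over $[0,T]$, insert the boundary conditions, and complete the squares: the terms quadratic in $\bar{\mathbf{X}}$ and in $\bar{\mathbf{Z}}$ cancel against the Riccati contributions and the cross terms collapse, leaving the displayed expression in $(\boldsymbol{\varphi},\beta)$ alone.

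I expect the main obstacle to be Part (iii): unlike the forward LQ Riccati equation, here $\mathbf{\Sigma}\mathbf{Q}\mathbf{\Sigma}$ enters with the ``wrong'' sign and the equation carries the extra term $\mathbf{C}(I_{n}+\mathbf{\Sigma}\mathbf{N})^{-1}\mathbf{\Sigma}\mathbf{C}^{\top}$, so global solvability and strict positivity are not automatic from ODE theory and must be squeezed out of the structural estimates above, all while tracking the invertibility of $I_{n}+\mathbf{\Sigma}(t)\mathbf{N}(t)$ along the solution; the parallel completion-of-squares bookkeeping in (v) is the other place where care is required, whereas everything else reduces to routine linear SDE/BSDE estimates.
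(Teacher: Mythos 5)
The paper does not prove this lemma at all: it is stated as a summary of known results and the reader is referred to \cite{Lim and Zhou 2001,Sun and Wang 2021,Sun et al. 2021}, so there is no in-paper argument to compare against. Your sketch is a faithful reconstruction of the classical route those references take (strict convexity and coercivity from $\mathbf{R}\geqslant\delta I$ for (i), first variation plus duality with a forward adjoint SDE for (ii), backward propagation of positive semidefiniteness and a verification/completion-of-squares computation for (iii)--(v)), and the algebra you indicate — e.g.\ the initial condition $\bar{\mathbf{X}}(0)=(I_{n}+\mathbf{G}\mathbf{\Sigma}(0))^{-1}(\mathbf{g}-\mathbf{G}\boldsymbol{\varphi}(0))$ and the identity $(I_{n}+\mathbf{\Sigma}\mathbf{N})\bar{\mathbf{Z}}=\mathbf{\Sigma}\mathbf{C}^{\top}\bar{\mathbf{X}}-\mathbf{\Sigma}\mathbf{n}-\beta$ — checks out. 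The one step in your outline that is stated too loosely is the global-existence bound in (iii): under the hypothesis $\mathbf{N}(t)\geqslant0$ only, the term $\mathbf{C}(I_{n}+\mathbf{\Sigma}\mathbf{N})^{-1}\mathbf{\Sigma}\mathbf{C}^{\top}$ is \emph{not} bounded uniformly in $\mathbf{\Sigma}$ (one only has $(I_{n}+\mathbf{\Sigma}\mathbf{N})^{-1}\mathbf{\Sigma}\leqslant\mathbf{\Sigma}$), and ``dropping'' the $\mathbf{\Sigma}\mathbf{Q}\mathbf{\Sigma}$ term does not by itself yield a two-sided bound $|\dot{\mathbf{\Sigma}}|\leqslant c(1+|\mathbf{\Sigma}|)$; the correct mechanism is a one-sided matrix comparison $0\leqslant\mathbf{\Sigma}(t)\leqslant\hat{\mathbf{\Sigma}}(t)$ against the linear Lyapunov equation obtained by discarding the two nonpositive contributions, after which the norm of $\mathbf{\Sigma}$, and hence of $\dot{\mathbf{\Sigma}}$, is controlled on $[0,T]$. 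With that repair your plan is complete and is exactly the argument the cited literature carries out.
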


\subsection{Stability of solution for SDE}
 In this subsection, we collect some stability notions and results of solution for SDE. For constant matrices $\mathcal{A},\mathcal{C}\in\mathbb{R}^{n\times n},~\mathcal{B},\mathcal{D}\in\mathbb{R}^{n\times m}$, let us denote by $[\mathcal{A},\mathcal{C}]$ the following linear homogeneous uncontrolled SDE:
$$
d\mathcal{X}(t)=\mathcal{A}\mathcal{X}(t)dt+\mathcal{C}\mathcal{X}(t)dW_t,\quad t\geqslant0.
$$
For any $x\in \mathbb{R}^{n}$, there exists a unique solution $\mathcal{X}(\cdot)\equiv\mathcal{X}(\cdot;x)$ of the above satisfying $\mathcal{X}(0;x)=x$. We recall the following classical notions, which can be seen in \cite{Sun et al. 2022}.
\begin{definition}
System $[\mathcal{A},\mathcal{C}]$ is said to be

(i) $L^{2}$-stable if
$$
\mathbb{E}\int_{0}^{\infty}|\mathcal{X}(t;x)|^{2}dt<\infty,\quad\forall x\in\mathbb{R}^{n}.
$$

(ii) mean-square exponentially stable if there exists a constant $\beta>0$ such that
$$
\sup _{t\in[0,\infty)}e^{\beta t}|\mathcal{X}(t;x)|<\infty,\quad \forall x\in\mathbb{R}^{n}.
$$
\end{definition}
To characterize the above stability notions, we let $\Phi(\cdot)$ be the solution to the matrix SDE:
$$
\left\{\begin{array}{l}
d\Phi(t)=\mathcal{A}\Phi(t)dt+\mathcal{C}\Phi(t)dW_{t},\quad t\geqslant0,\\
\Phi(0)=I_{n}
\end{array}\right.
$$
We recall the following result from \cite{Sun et al. 2022}, whose proof can be seen in \cite{Huang et al. 2015,Rami and Zhou 2000,Sun and Yong 2020}.
\begin{lemma}\label{le-2}
The following are equivalent:

(i) The system $[\mathcal{A},\mathcal{C}]$ is mean-square exponentially stable.

(ii) There exist constants $\alpha,\beta>0$ such that
\begin{equation}\label{alpha,beta}
\mathbb{E}|\Phi(t)|^{2}\leqslant\alpha e^{-\beta t},\quad\forall t\geqslant0.
\end{equation}

(iii) It holds that
$$
\mathbb{E}\int_{0}^{\infty}|\Phi(t)|^{2}dt<\infty.
$$

(iv) The system $[\mathcal{A},\mathcal{C}]$ is $L^{2}$-stable.

(v) There exists a constant matrix $P\in\mathbb{S}_{+}^{n}$ such that
$$
P\mathcal{A}+\mathcal{A}^{\top}P+\mathcal{C}^{\top}P\mathcal{C}<0.
$$
\end{lemma}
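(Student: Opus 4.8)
The approach I would take is the classical cyclic chain $(\mathrm{v})\Rightarrow(\mathrm{ii})\Rightarrow(\mathrm{iii})\Rightarrow(\mathrm{iv})\Rightarrow(\mathrm{v})$, together with the separate equivalence $(\mathrm{i})\Leftrightarrow(\mathrm{ii})$, with all the analysis funneled through the second moment of the fundamental matrix. A routine application of It\^o's formula to $\Phi(t)^{\top}M\Phi(t)$ for a constant $M\in\mathbb{S}^{n}$ yields
\[
d\big(\Phi(t)^{\top}M\Phi(t)\big)=\Phi(t)^{\top}\big(\mathcal{A}^{\top}M+M\mathcal{A}+\mathcal{C}^{\top}M\mathcal{C}\big)\Phi(t)\,dt+\Phi(t)^{\top}\big(M\mathcal{C}+\mathcal{C}^{\top}M\big)\Phi(t)\,dW_{t},
\]
whence $Q(t):=\mathbb{E}[\Phi(t)^{\top}\Phi(t)]\geqslant0$ solves the generalized Lyapunov equation $\dot Q=\mathcal{A}^{\top}Q+Q\mathcal{A}+\mathcal{C}^{\top}Q\mathcal{C}$, $Q(0)=I_{n}$. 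Since $\mathcal{X}(t;x)=\Phi(t)x$, this also gives $\mathbb{E}|\mathcal{X}(t;x)|^{2}=x^{\top}Q(t)x$ and $\mathbb{E}|\Phi(t)|^{2}=tr(Q(t))=\sum_{j=1}^{n}\mathbb{E}|\mathcal{X}(t;e_{j})|^{2}$, from which $(\mathrm{iii})\Leftrightarrow(\mathrm{iv})$ and $(\mathrm{i})\Leftrightarrow(\mathrm{ii})$ follow at once (in each case the nontrivial direction only needs evaluation at $x=e_{1},\dots,e_{n}$ together with $|\mathcal{X}(t;x)|^{2}\leqslant|x|^{2}|\Phi(t)|^{2}$).

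For $(\mathrm{v})\Rightarrow(\mathrm{ii})$ I would fix $P\in\mathbb{S}_{+}^{n}$ and $\varepsilon>0$ with $P\mathcal{A}+\mathcal{A}^{\top}P+\mathcal{C}^{\top}P\mathcal{C}\leqslant-\varepsilon I_{n}$ and use the Lyapunov matrix $N(t):=\mathbb{E}[\Phi(t)^{\top}P\Phi(t)]$. Taking $M=P$ in the identity above and using the Lyapunov inequality pointwise, $\dot N(t)\leqslant-\varepsilon Q(t)$, while $\lambda_{\min}(P)Q(t)\leqslant N(t)\leqslant\lambda_{\max}(P)Q(t)$. Taking traces, $\frac{d}{dt}tr(N(t))\leqslant-\varepsilon\,tr(Q(t))\leqslant-\frac{\varepsilon}{\lambda_{\max}(P)}tr(N(t))$, so Gr\"onwall's inequality gives $tr(N(t))\leqslant e^{-\kappa t}\,tr(P)$ with $\kappa=\varepsilon/\lambda_{\max}(P)$, hence $\mathbb{E}|\Phi(t)|^{2}=tr(Q(t))\leqslant\frac{tr(P)}{\lambda_{\min}(P)}e^{-\kappa t}$, which is $(\mathrm{ii})$. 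Then $(\mathrm{ii})\Rightarrow(\mathrm{iii})$ is a one-line integration.

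The step with real content is $(\mathrm{iv})\Rightarrow(\mathrm{v})$, which I would settle by constructing the Lyapunov matrix explicitly. From $(\mathrm{iv})$, $\int_{0}^{\infty}x^{\top}Q(t)x\,dt=\mathbb{E}\int_{0}^{\infty}|\mathcal{X}(t;x)|^{2}dt<\infty$ for every $x$; since $Q(t)\geqslant0$ (so $|Q_{ij}(t)|\leqslant\tfrac12(Q_{ii}(t)+Q_{jj}(t))$), the matrix $P:=\int_{0}^{\infty}Q(t)\,dt=\mathbb{E}\int_{0}^{\infty}\Phi(t)^{\top}\Phi(t)\,dt$ is a well-defined element of $\mathbb{S}^{n}$, and $Q(0)=I_{n}$ with continuity forces $Q(t)\geqslant\tfrac12 I_{n}$ on some $[0,\delta]$, so $P\geqslant\tfrac{\delta}{2}I_{n}>0$. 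As $Q\in L^{1}(0,\infty;\mathbb{S}^{n})$ and $\dot Q=\mathcal{A}^{\top}Q+Q\mathcal{A}+\mathcal{C}^{\top}Q\mathcal{C}$ depends boundedly and linearly on $Q$, the derivative $\dot Q$ is integrable too, so $\lim_{t\to\infty}Q(t)$ exists and, being the limit of an integrable function, must equal $0$. Integrating $\dot Q=\mathcal{A}^{\top}Q+Q\mathcal{A}+\mathcal{C}^{\top}Q\mathcal{C}$ over $[0,\infty)$ and passing the bounded linear map through the integral then yields $\mathcal{A}^{\top}P+P\mathcal{A}+\mathcal{C}^{\top}P\mathcal{C}=\lim_{t\to\infty}Q(t)-Q(0)=-I_{n}<0$, which is $(\mathrm{v})$, closing the cycle.

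The hard part is exactly this last implication: upgrading mere $L^{2}$-integrability of the second-moment matrix to the genuine decay $Q(t)\to0$. It works cleanly only because $Q$ obeys a \emph{finite-dimensional} linear ODE, so the elementary ``integrable function with integrable derivative tends to $0$'' applies; equivalently, one can argue spectrally, noting that $\int_{0}^{\infty}\|e^{t\mathscr{L}}\|\,dt<\infty$ for the generator $\mathscr{L}(M)=\mathcal{A}^{\top}M+M\mathcal{A}+\mathcal{C}^{\top}M\mathcal{C}$ forces $\sigma(\mathscr{L})\subset\{\lambda:\mathrm{Re}\,\lambda<0\}$ and hence $\|e^{t\mathscr{L}}\|\leqslant\alpha e^{-\beta t}$, which in passing also re-proves $(\mathrm{iii})\Rightarrow(\mathrm{ii})$ directly. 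Every other step reduces to It\^o's formula, Gr\"onwall's inequality, and elementary linear algebra; in particular, no infinite-horizon BSDE input enters, which is precisely what makes this lemma a usable ``forward'' tool in the rest of the paper.
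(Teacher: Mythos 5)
The paper does not prove Lemma \ref{le-2} at all: it is recalled verbatim from [Sun, Wang and Yong 2022], with the proof delegated to [Huang, Li and Yong 2015], [Rami and Zhou 2000] and [Sun and Yong 2020]. So there is no in-paper argument to compare against; your proposal supplies the standard self-contained proof, and the overall architecture (cycle $(\mathrm{v})\Rightarrow(\mathrm{ii})\Rightarrow(\mathrm{iii})\Rightarrow(\mathrm{iv})\Rightarrow(\mathrm{v})$ plus $(\mathrm{i})\Leftrightarrow(\mathrm{ii})$, all routed through second moments of $\Phi$) is correct and is essentially the argument in those references. The trace identities, the $(\mathrm{v})\Rightarrow(\mathrm{ii})$ Gr\"onwall step (where the Lyapunov inequality sits \emph{inside} $\Phi^{\top}(\cdot)\Phi$, so taking expectations is harmless), and the $L^{1}$-plus-integrable-derivative argument for $Q(t)\to0$ are all fine. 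You are also tacitly reading the paper's definition of mean-square exponential stability as $\sup_{t}e^{\beta t}\mathbb{E}|\mathcal{X}(t;x)|^{2}<\infty$ rather than the (surely typographical) pathwise statement printed there; that is the intended meaning and the right one for $(\mathrm{i})\Leftrightarrow(\mathrm{ii})$.

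The one step you should not label ``routine'' is the passage from It\^o's formula to the closed equation $\dot Q=\mathcal{A}^{\top}Q+Q\mathcal{A}+\mathcal{C}^{\top}Q\mathcal{C}$ for $Q(t)=\mathbb{E}[\Phi(t)^{\top}\Phi(t)]$. It\^o only gives $\dot Q(t)=\mathbb{E}\big[\Phi(t)^{\top}\big(\mathcal{A}^{\top}+\mathcal{A}+\mathcal{C}^{\top}\mathcal{C}\big)\Phi(t)\big]$, with the coefficient matrices trapped between $\Phi^{\top}$ and $\Phi$; the individual terms do \emph{not} factor (e.g.\ $\mathbb{E}[\Phi^{\top}\mathcal{A}^{\top}\Phi]\neq\mathcal{A}^{\top}\mathbb{E}[\Phi^{\top}\Phi]$ in general, as a second-order Taylor expansion in $t$ already shows when $\mathcal{A}$ and $\mathcal{C}$ do not commute). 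The identity you want is nevertheless true, but it requires the flow property: writing $\Phi(t+h)=\Psi\Phi(t)$ with $\Psi$ independent of $\mathscr{F}_{t}$ and distributed as $\Phi(h)$ shows that $S_{t}(M):=\mathbb{E}[\Phi(t)^{\top}M\Phi(t)]$ is a one-parameter semigroup on $\mathbb{S}^{n}$ with generator $\mathscr{L}(M)=\mathcal{A}^{\top}M+M\mathcal{A}+\mathcal{C}^{\top}M\mathcal{C}$, whence $S_{t}=e^{t\mathscr{L}}$ commutes with $\mathscr{L}$ and $\dot Q=S_{t}(\mathscr{L}I)=\mathscr{L}(S_{t}I)=\mathscr{L}(Q)$. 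You should either insert this semigroup argument, or bypass it in $(\mathrm{iv})\Rightarrow(\mathrm{v})$ by the equally standard route: set $P=\mathbb{E}\int_{0}^{\infty}\Phi(t)^{\top}\Phi(t)\,dt$ and use the same flow property to get $\mathbb{E}\big[\mathcal{X}(s;x)^{\top}P\,\mathcal{X}(s;x)\big]=\int_{s}^{\infty}\mathbb{E}|\mathcal{X}(t;x)|^{2}dt$, then differentiate at $s=0$ via It\^o to obtain $x^{\top}\big(P\mathcal{A}+\mathcal{A}^{\top}P+\mathcal{C}^{\top}P\mathcal{C}\big)x=-|x|^{2}$. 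With that one repair the proof is complete, and your closing remark is apt: nothing here uses infinite-horizon BSDE theory, which is exactly why the lemma can serve as the forward stability input for the rest of the paper.
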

We further denote by $[\mathcal{A},\mathcal{C};\mathcal{B},\mathcal{D}]$ the following controlled (homogeneous) linear system:
$$
\left\{\begin{array}{l}
d\mathcal{X}(t)=[\mathcal{A}\mathcal{X}(t)+\mathcal{B}v(t)]dt+[\mathcal{C}\mathcal{X}(t)+\mathcal{D}v(t)]dW_t,\quad t\geqslant0,\\
\mathcal{X}(0)=x,
\end{array}\right.
$$
where $v(\cdot)$ is taken from the following set of admissible controls
$$
\mathscr{U}_{m}[0,\infty)=\left\{u:[0,\infty)\times\Omega\rightarrow\mathbb{R}^{m}|u(\cdot) \text { is } \mathbb{F} \text {-progressively measurable, } \mathbb{E} \int_{0}^{\infty}|u(t)|^{2} d t<\infty\right\} .
$$
The following notion can be seen in \cite{Huang et al. 2015,Sun et al. 2022,Sun and Yong 2020}.
\begin{definition}
The system $[\mathcal{A},\mathcal{C};\mathcal{B},\mathcal{D}]$ is said to be $L^{2}$-stabilizable if there exists a matrix $\Theta\in\mathbb{R}^{m\times n}$ such that the (closed-loop) system $[\mathcal{A}+\mathcal{B}\Theta,\mathcal{C}+\mathcal{D}\Theta]$ is $L^{2}$-stable. In this case, $\Theta$ is called a stabilizer of $[\mathcal{A},\mathcal{C};\mathcal{B},\mathcal{D}]$.
\end{definition}

\section{Main results}
In this section, we present our main results. In particular, turnpike properties for Problem (BLQ)$_{T}$ are stated. First of all, we introduce the following hypotheses.\\
\\
$\mathbf{(H1)}$ The system $[A,0_{n\times n};(B~C),(0_{n\times m}~I_{n})]$ is $L^{2}$-stabilizable.\\
\\
$\mathbf{(H2)}$ The weighting matrices $Q,N\in\mathbb{S}_{+}^{n}$ and $R\in\mathbb{S}_{+}^{m}$.\\
\\
We consider the following two AREs:
\begin{equation}\label{P}
PA+A^{\top}P+Q-PBR^{-1}B^{\top}P-PC(N+P)^{-1}C^{\top}P=0,
\end{equation}
\begin{equation}\label{Sigma}
\Sigma A^{\top}+A\Sigma+\Sigma Q\Sigma-BR^{-1}B^{\top}-C(I_{n}+\Sigma N)^{-1}\Sigma C^{\top}=0.
\end{equation}
and obtain the solvability of \eqref{P} and \eqref{Sigma} in the next lemma.
\begin{lemma}\label{le-4}
Let (H1)-(H2) be satisfied, then the ARE \eqref{P} admits a unique solution $P\in\mathbb{S}_{+}^{n}$ and ARE \eqref{Sigma} admits a unique solution $\Sigma\in\mathbb{S}_{+}^{n}$.
\end{lemma}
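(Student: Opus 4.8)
The plan is to solve the two equations in turn: first \eqref{P}, by recognizing it as the algebraic Riccati equation of an infinite-horizon forward stochastic LQ problem, and then \eqref{Sigma}, by passing to the inverse $\Sigma=P^{-1}$.

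For \eqref{P}, I would consider the controlled system $[A,0_{n\times n};(B~C),(0_{n\times m}~I_{n})]$ from (H1), i.e. the dynamics
\[
dX(t)=[AX(t)+Bv_{1}(t)+Cv_{2}(t)]\,dt+v_{2}(t)\,dW_{t},\qquad v=(v_{1}^{\top},v_{2}^{\top})^{\top}\in\mathbb{R}^{m+n},
\]
together with the cost functional $\mathbb{E}\int_{0}^{\infty}[\langle QX,X\rangle+\langle Rv_{1},v_{1}\rangle+\langle Nv_{2},v_{2}\rangle]\,dt$. Writing out the usual LQ algebraic Riccati equation for $\mathcal{A}=A$, $\mathcal{C}=0_{n\times n}$, $\mathcal{B}=(B~C)$, $\mathcal{D}=(0_{n\times m}~I_{n})$, with state weight $Q$ and control weight $\mathrm{diag}(R,N)$, and using that $\mathrm{diag}(R,N)+\mathcal{D}^{\top}P\mathcal{D}=\mathrm{diag}(R,N+P)$ is block diagonal, one checks that this Riccati equation is precisely \eqref{P}. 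Now (H1) says this system is $L^{2}$-stabilizable and (H2) gives $\mathrm{diag}(R,N)>0$ and $Q>0$; the strict positivity of the state weight makes the system exactly observable, hence detectable. By the standard infinite-horizon stochastic LQ theory (see \cite{Sun and Yong 2020} and the references around Lemma \ref{le-2}), \eqref{P} then has a stabilizing solution $P\geqslant0$, which is in fact positive definite because $Q>0$ (via the Lyapunov representation of $P$, or the strict positivity of the optimal cost). For uniqueness, rewriting \eqref{P} as a Lyapunov equation for the optimal closed-loop coefficients and invoking Lemma \ref{le-2}(v) shows that \emph{every} solution in $\mathbb{S}_{+}^{n}$ is stabilizing, hence equals the (unique) stabilizing solution $P$.

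For \eqref{Sigma}, I would set $\Sigma:=P^{-1}\in\mathbb{S}_{+}^{n}$. Multiplying \eqref{P} on the left and on the right by $\Sigma$ and using $\Sigma P=P\Sigma=I_{n}$ gives
\[
A\Sigma+\Sigma A^{\top}+\Sigma Q\Sigma-BR^{-1}B^{\top}-C(N+P)^{-1}C^{\top}=0 .
\]
The one remaining point is the identity $(N+P)^{-1}=(I_{n}+\Sigma N)^{-1}\Sigma$, which follows from $I_{n}+\Sigma N=\Sigma(P+N)$, so that $(I_{n}+\Sigma N)^{-1}=(P+N)^{-1}P$ and $(I_{n}+\Sigma N)^{-1}\Sigma=(P+N)^{-1}P\Sigma=(P+N)^{-1}$. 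Substituting, $\Sigma$ solves \eqref{Sigma}. Conversely, if $\Sigma'\in\mathbb{S}_{+}^{n}$ solves \eqref{Sigma}, multiplying on both sides by $(\Sigma')^{-1}$ and using the same identity shows $(\Sigma')^{-1}\in\mathbb{S}_{+}^{n}$ solves \eqref{P}; by the uniqueness just proved, $(\Sigma')^{-1}=P$, whence $\Sigma'=P^{-1}=\Sigma$.

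I expect the only genuinely delicate step to be the identification of \eqref{P} with the LQ algebraic Riccati equation of the augmented control $(v_{1},v_{2})$: one must track the block structure so that $\mathrm{diag}(R,N)+\mathcal{D}^{\top}P\mathcal{D}$ comes out block diagonal as $\mathrm{diag}(R,N+P)$, and one must check that positive definiteness of $Q$ in (H2) supplies exactly the observability (hence detectability) needed to upgrade ``stabilizing solution'' to ``unique positive definite solution''. The algebraic identity $(N+P)^{-1}=(I_{n}+P^{-1}N)^{-1}P^{-1}$ is the heart of the $P\leftrightarrow\Sigma$ correspondence; the remaining manipulations are routine.
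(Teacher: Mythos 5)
Your proposal is correct and follows essentially the same route as the paper: both recognize \eqref{P} as the algebraic Riccati equation of the forward stochastic LQ problem for the augmented-control system $[A,0_{n\times n};(B~C),(0_{n\times m}~I_{n})]$ with control weight $\mathrm{diag}(R,N)$, and then obtain \eqref{Sigma} from \eqref{P} via the inversion $\Sigma=P^{-1}$ together with the identity $(N+P)^{-1}=(I_{n}+\Sigma N)^{-1}\Sigma$. The only difference is that the paper simply cites \cite[Proposition 3.5]{Sun et al. 2022} for the unique positive definite solvability of \eqref{P}, whereas you sketch that result's proof (stabilizing solution, positivity from $Q>0$, uniqueness via the closed-loop Lyapunov equation and Lemma \ref{le-2}(v)), which is a correct expansion of the same step.
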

\begin{proof}
We rewrite ARE \eqref{P} as
\begin{equation}\label{P'}
PA+A^{\top}P+Q-P(B~C)\left[\begin{pmatrix}
	R &  \\
	  & N
\end{pmatrix}+\begin{pmatrix}
0_{m\times n}\\
I_{n}
\end{pmatrix}
P
\begin{pmatrix}
	0_{n\times m} & I_{n}
\end{pmatrix}
\right]^{-1}(B~C)^{\top}P=0.
\end{equation}
Thus, under (H1)-(H2), it follows from \cite[Propsition 3.5]{Sun et al. 2022} that ARE \eqref{P} has a unique solution $P\in\mathbb{S}_{+}^{n}$. Letting $\Sigma=P^{-1}>0$, \eqref{P} is equivalent to 
\begin{equation}\label{Sigma'}
\Sigma^{-1}A+A^{\top}\Sigma^{-1}+Q-\Sigma^{-1}BR^{-1}B^{\top}\Sigma^{-1}-\Sigma^{-1}C(N+\Sigma^{-1})^{-1}C^{\top}\Sigma^{-1}=0,
\end{equation}
which yields ARE \eqref{Sigma} by multiplying first left and then right by $\Sigma$. Thus $\Sigma=P^{-1}$ is a solution of ARE \eqref{Sigma}. Conversely, let $\Sigma$ be a solution of ARE \eqref{Sigma} such that $\Sigma\in\mathbb{S}_{+}^{n}$ and $P=\Sigma^{-1}$. Similarly, by multiplying first left and then right by $\Sigma^{-1}$ in \eqref{Sigma}, we verify that $P$ is a solution of ARE \eqref{P} and its uniqueness leads to the uniqueness of $\Sigma$.
\end{proof}

The following lemma provides another perspective of hypotheses (H1)-(H2), which is useful for our study.
\begin{lemma}\label{th-1}
Let (H1)-(H2) be satisfied and set
\begin{equation}\label{A,C_Sigma}
A_{\Sigma}=-(A+\Sigma Q)^{\top},\quad C_{\Sigma}=-[C(I_{n}+\Sigma N)^{-1}]^{\top}
\end{equation}
where $\Sigma$ is the solution of ARE \eqref{Sigma}, then the system $[A_{\Sigma},C_{\Sigma}]$ is $L^{2}$-stable.
\end{lemma}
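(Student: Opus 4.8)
The plan is to reduce the $L^2$-stability of $[A_\Sigma,C_\Sigma]$ to the $L^2$-stability of the optimal closed-loop system of the forward LQ problem behind ARE \eqref{P}, via a constant change of coordinates. Recall from the proof of Lemma \ref{le-4} that $P:=\Sigma^{-1}\in\mathbb{S}_+^n$ is the unique solution of \eqref{P}, and that \eqref{P'} exhibits $P$ as the (hence stabilizing) solution of the ARE of the forward stochastic LQ problem with state coefficients $[A,0_{n\times n};(B~C),(0_{n\times m}~I_n)]$ and weights $Q$, $\mathrm{diag}(R,N)$. Under (H1)--(H2), the theory of forward stochastic LQ problems (\cite{Sun et al. 2022}, cf. \cite{Sun and Yong 2020}) then guarantees that the corresponding optimal closed-loop system $[\hat A,\hat C]$ is $L^2$-stable, where
\begin{equation*}
\hat A=A-BR^{-1}B^{\top}P-C(N+P)^{-1}C^{\top}P,\qquad \hat C=-(N+P)^{-1}C^{\top}P .
\end{equation*}

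Next I would verify the two algebraic identities $\hat A=\Sigma A_\Sigma\Sigma^{-1}$ and $\hat C=\Sigma C_\Sigma\Sigma^{-1}$. For $\hat C$, this is the elementary identity $\Sigma(I_n+N\Sigma)^{-1}=[(I_n+N\Sigma)\Sigma^{-1}]^{-1}=(\Sigma^{-1}+N)^{-1}=(P+N)^{-1}$, combined with $C_\Sigma^{\top}=-C(I_n+\Sigma N)^{-1}$, $C_\Sigma=-(I_n+N\Sigma)^{-1}C^{\top}$, and $\Sigma^{-1}=P$. For $\hat A$, the point is to left-multiply \eqref{P} (equivalently \eqref{P'}) by $\Sigma$, which gives $BR^{-1}B^{\top}P+C(N+P)^{-1}C^{\top}P=A+\Sigma A^{\top}P+\Sigma Q$; substituting this into the formula for $\hat A$ collapses it to $\hat A=-\Sigma A^{\top}\Sigma^{-1}-\Sigma Q=-\Sigma(A^{\top}+Q\Sigma)\Sigma^{-1}=-\Sigma(A+\Sigma Q)^{\top}\Sigma^{-1}=\Sigma A_\Sigma\Sigma^{-1}$, where I used that $Q,\Sigma$ are symmetric.

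Finally, since $\Sigma$ is a fixed invertible matrix, $[A_\Sigma,C_\Sigma]$ and $[\hat A,\hat C]=[\Sigma A_\Sigma\Sigma^{-1},\Sigma C_\Sigma\Sigma^{-1}]$ are conjugate: if $\mathcal X(\cdot)$ solves $d\mathcal X=A_\Sigma\mathcal X\,dt+C_\Sigma\mathcal X\,dW$ then $\mathcal Y:=\Sigma\mathcal X$ solves $d\mathcal Y=\hat A\mathcal Y\,dt+\hat C\mathcal Y\,dW$, and the two-sided bound $|\mathcal Y|\le\|\Sigma\|\,|\mathcal X|$, $|\mathcal X|\le\|\Sigma^{-1}\|\,|\mathcal Y|$ shows that $\mathbb{E}\int_0^{\infty}|\mathcal X(t)|^2dt<\infty$ for all initial data if and only if the same holds for $\mathcal Y$; hence $[A_\Sigma,C_\Sigma]$ is $L^2$-stable. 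Equivalently, one may invoke Lemma \ref{le-2}(v): choosing $\bar P\in\mathbb{S}_+^n$ with $\bar P\hat A+\hat A^{\top}\bar P+\hat C^{\top}\bar P\hat C<0$, the matrix $\hat S:=\Sigma\bar P\Sigma\in\mathbb{S}_+^n$ satisfies $\hat S A_\Sigma+A_\Sigma^{\top}\hat S+C_\Sigma^{\top}\hat S C_\Sigma<0$. The only delicate point is the algebra of the second paragraph — in particular using \eqref{P} to rewrite the drift $\hat A$ — with everything else being bookkeeping; the step I expect to require the most care is making sure the matrix transposes and the $\Sigma$--$P$ interchanges are handled correctly, since $A$, $C$, $\Sigma Q$, and $\Sigma N$ are not symmetric.
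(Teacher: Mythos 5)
Your proof is correct, but it takes a genuinely different route from the paper. The paper's argument is a one-step direct verification: it computes $\Sigma A_{\Sigma}+A_{\Sigma}^{\top}\Sigma+C_{\Sigma}^{\top}\Sigma C_{\Sigma}$ and uses the ARE \eqref{Sigma} to rewrite it as $-\Sigma Q\Sigma-BR^{-1}B^{\top}-C(I_{n}+\Sigma N)^{-1}\Sigma N\Sigma(I_{n}+N\Sigma)^{-1}C^{\top}<0$, so that $\Sigma$ itself serves as the Lyapunov matrix in Lemma \ref{le-2}(v). You instead identify $[A_{\Sigma},C_{\Sigma}]$, up to the similarity $\mathcal{Y}=\Sigma\mathcal{X}$, with the optimal closed-loop system $[\hat A,\hat C]$ of the forward LQ problem behind \eqref{P'}, and import its $L^{2}$-stability from the forward theory; your algebraic identities $\hat A=\Sigma A_{\Sigma}\Sigma^{-1}$ and $\hat C=\Sigma C_{\Sigma}\Sigma^{-1}$ both check out, as does the conjugation/congruence step. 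What your approach buys is a conceptual explanation of \emph{why} $[A_{\Sigma},C_{\Sigma}]$ is stable (it is the backward-problem avatar of the stabilizing closed loop of the inverted forward problem), at the cost of leaning on the closed-loop-stability portion of \cite[Proposition 3.5]{Sun et al. 2022} rather than only its solvability portion, which is all the paper cites in Lemma \ref{le-4}. What the paper's computation buys, and yours does not directly deliver, is the explicit strict Lyapunov inequality $\Sigma A_{\Sigma}+A_{\Sigma}^{\top}\Sigma+C_{\Sigma}^{\top}\Sigma C_{\Sigma}<0$ with the specific matrix $\Sigma$; this exact inequality (not merely stability) is reused quantitatively later, e.g.\ to choose $\varepsilon$ in the proof of Lemma \ref{le-7} and to define $k_{71}$ in \eqref{k71}, so if you adopted your route you would still want to record that inequality separately (your $\hat S=\Sigma\bar P\Sigma$ gives a Lyapunov matrix, but not $\Sigma$ itself).
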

\begin{proof}
From \eqref{Sigma}, we deduce that
$$
\begin{aligned}
&\Sigma A_{\Sigma}+A_{\Sigma}^{\top}\Sigma+C_{\Sigma}^{\top}\Sigma C_{\Sigma}\\
&=-\Sigma(A+\Sigma Q)^{\top}-(A+\Sigma Q)\Sigma+C(I_{n}+\Sigma N)^{-1}\Sigma(I_{n}+N\Sigma)^{-1}C^{\top}\\
&=-\Sigma Q\Sigma-BR^{-1}B^{\top}+C(I_{n}+\Sigma N)^{-1}\Sigma(I_{n}+N\Sigma)^{-1}C^{\top}-C(I_{n}+\Sigma N)^{-1}\Sigma C^{\top}\\
&=-\Sigma Q\Sigma-BR^{-1}B^{\top}-C(I_{n}+\Sigma N)^{-1}\Sigma N\Sigma (I_{n}+N\Sigma)^{-1}C^{\top}.
\end{aligned}
$$
Since
$$
\Sigma,Q,N,R>0,
$$
then
$$
\Sigma Q\Sigma>0,~BR^{-1}B^{\top}\geqslant 0,~\Sigma N\Sigma>0.
$$
Thus,
\begin{equation*}
C(I_{n}+\Sigma N)^{-1}\Sigma N\Sigma (I_{n}+N\Sigma)^{-1}C^{\top}\geqslant 0.
\end{equation*}
Therefore
$$
\Sigma A_{\Sigma}+A_{\Sigma}^{\top}\Sigma+C_{\Sigma}^{\top}\Sigma C_{\Sigma}<0,
$$
which, combining with Lemma \ref{le-2}, completes the proof.
\end{proof}

Inspired by \cite{Sun et al. 2022}, we will introduce the static optimization problem related to Problem (BLQ)$_{T}$. By means of the positive definite matrix $\Sigma$, we can define
$$
\begin{aligned}
&H=\left\{(y,z,u)\in\mathbb{R}^{n}\times\mathbb{R}^{n}\times\mathbb{R}^{m}|Ay+Bu+Cz+b=0\right\},\\
&F(y,z,u)=\frac{1}{2}[\langle Qy,y\rangle+\langle Nz,z\rangle+\langle Ru,u\rangle+2\langle q,y\rangle+2\langle \mathbf{n},z\rangle+2\langle r,u\rangle+\langle \Sigma^{-1}z,z\rangle],
\end{aligned}
$$
and consider the following static optimization problem.\\
\\
$\mathbf{Problem~(S)}$. Find $\left(y^{*},z^{*},u^{*}\right)\in H$ such that
$$
F\left(y^{*},z^{*},u^{*}\right)=\min_{(y,z,u)\in H}F(y,z,u).
$$

For Problem (S), we have the following result.
\begin{lemma}\label{th-2}
Let (H1)-(H2) hold. Then Problem (S) admits a unique solution $\left(y^{*},z^{*},u^{*}\right)\in H$ which, together with a Lagrange multiplier $\lambda^{*} \in \mathbb{R}^{n}$, is characterized by the following system of linear equations:
\begin{equation}\label{*}
\left\{\begin{array}{l}
Qy^{*}+A^{\top}\lambda^{*}+q=0,\\
(N+\Sigma^{-1})z^{*}+C^{\top}\lambda^{*}+\mathbf{n}=0,\\
R u^{*}+B^{\top} \lambda^{*}+r=0 .
\end{array}\right.
\end{equation}
\end{lemma}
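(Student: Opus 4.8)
The plan is to recognize Problem~(S) as the minimization of a strictly convex, coercive quadratic function over a nonempty affine subspace of $\mathbb{R}^{n}\times\mathbb{R}^{n}\times\mathbb{R}^{m}$, and then to read off \eqref{*} from the first-order (Lagrange) conditions. First I would note that the quadratic part of $F$ has Hessian $\mathrm{diag}(Q,\,N+\Sigma^{-1},\,R)$, which is positive definite by (H2) together with $\Sigma\in\mathbb{S}_{+}^{n}$ from Lemma~\ref{le-4}; hence $F$ is strictly convex and coercive. It remains to check that the feasible set $H$ is nonempty. Here (H1) is used: a stabilizer $\Theta=(\Theta_{1}^{\top}\ \Theta_{2}^{\top})^{\top}$, with $\Theta_{1}\in\mathbb{R}^{m\times n}$ and $\Theta_{2}\in\mathbb{R}^{n\times n}$, of $[A,0_{n\times n};(B\ C),(0_{n\times m}\ I_{n})]$ makes $[A+B\Theta_{1}+C\Theta_{2},\,\Theta_{2}]$ $L^{2}$-stable, so Lemma~\ref{le-2}(v) provides $P\in\mathbb{S}_{+}^{n}$ with $P(A+B\Theta_{1}+C\Theta_{2})+(A+B\Theta_{1}+C\Theta_{2})^{\top}P+\Theta_{2}^{\top}P\Theta_{2}<0$; since $\Theta_{2}^{\top}P\Theta_{2}\geqslant0$, this forces the strict Lyapunov inequality $P(A+B\Theta_{1}+C\Theta_{2})+(A+B\Theta_{1}+C\Theta_{2})^{\top}P<0$, so $A+B\Theta_{1}+C\Theta_{2}$ is Hurwitz and in particular invertible. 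Taking $y_{0}=-(A+B\Theta_{1}+C\Theta_{2})^{-1}b$, $u_{0}=\Theta_{1}y_{0}$, $z_{0}=\Theta_{2}y_{0}$ then gives a point of $H$. A strictly convex coercive function on the nonempty closed affine set $H$ attains its infimum at exactly one point $(y^{*},z^{*},u^{*})$, which is the unique solution of Problem~(S).

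Next I would derive the characterization \eqref{*}. The constraint $Ay+Bu+Cz+b=0$ being affine, the constraint qualification holds automatically, so the Lagrange multiplier theorem yields $\lambda^{*}\in\mathbb{R}^{n}$ such that the gradient in $(y,z,u)$ of $\mathcal{L}(y,z,u;\lambda^{*})=F(y,z,u)+\langle\lambda^{*},Ay+Bu+Cz+b\rangle$ vanishes at $(y^{*},z^{*},u^{*})$; writing out the three blocks of this gradient gives precisely the system \eqref{*}. For the converse direction, suppose $(y^{*},z^{*},u^{*})\in H$ and $\lambda^{*}\in\mathbb{R}^{n}$ satisfy \eqref{*}. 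For that fixed $\lambda^{*}$ the map $(y,z,u)\mapsto\mathcal{L}(y,z,u;\lambda^{*})$ is convex (it has the same positive definite Hessian as $F$), so the stationarity condition \eqref{*} makes $(y^{*},z^{*},u^{*})$ a global minimizer of $\mathcal{L}(\cdot;\lambda^{*})$; since $\mathcal{L}(\cdot;\lambda^{*})$ coincides with $F$ on $H$, the triple minimizes $F$ over $H$, hence equals the unique solution obtained above. As a byproduct, invertibility of $A+B\Theta_{1}+C\Theta_{2}$ shows that $(A\ B\ C)$ is surjective, so $(A^{\top}\ B^{\top}\ C^{\top})^{\top}$ is injective and $\lambda^{*}$ is in fact unique.

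I expect the only genuinely non-routine step to be the nonemptiness of $H$, i.e. the solvability of the linear equation $Ay+Bu+Cz+b=0$; this is precisely where hypothesis (H1) enters, through the chain ``$L^{2}$-stabilizable $\Rightarrow$ closed-loop drift Hurwitz $\Rightarrow$ invertible''. The strict convexity, the existence and uniqueness of the minimizer, and the passage between Problem~(S) and the Lagrange system \eqref{*} are then standard finite-dimensional convex-optimization arguments.
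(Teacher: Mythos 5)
Your proposal is correct and follows the same overall strategy as the paper: establish that the feasible set $H$ is nonempty using (H1), get existence and uniqueness of the minimizer from coercivity and strict convexity under (H2) and $\Sigma\in\mathbb{S}^{n}_{+}$, and then read off \eqref{*} from the Lagrange conditions. The one place where you genuinely diverge is the non-routine step, the nonemptiness of $H$. The paper passes from $L^{2}$-stabilizability of $[A,0;(B~C),(0~I_{n})]$ to that of $[A,0;(B~C),0]$ via the Lyapunov inequality, and then invokes a cited equivalence with the Hautus-type rank condition to conclude that $(A,B,C)$ has rank $n$. You instead stay inside Lemma~\ref{le-2}(v): dropping the nonnegative term $\Theta_{2}^{\top}P\Theta_{2}$ gives the strict deterministic Lyapunov inequality, so the closed-loop drift $A+B\Theta_{1}+C\Theta_{2}$ is Hurwitz, hence invertible, which produces an explicit feasible point and in fact shows surjectivity of $(A~B~C)$ directly. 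This is more self-contained (no appeal to the external Hautus equivalence) and yields the rank statement the paper needs as a byproduct. Your treatment of the converse direction is also slightly cleaner: you argue sufficiency of \eqref{*} via convexity of the Lagrangian and uniqueness of $\lambda^{*}$ via injectivity of the stacked transpose, whereas the paper solves for $\lambda^{*}$ explicitly using the invertibility of $(A,C,B)\,\mathrm{diag}(Q,N+\Sigma^{-1},R)^{-1}(A,C,B)^{\top}$; the two arguments carry the same content.
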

\begin{proof}
Since $[A,0_{n\times n};(B~C),(0_{n\times m}~I_{n})]$ is $L^{2}$-stabilizable, so must be $[A,0_{n\times n};(B~C),0_{n\times (n+m)}]$. In fact, if $\Theta$ is a stabilizer of $[A,0_{n\times n};(B~C),(0_{n\times m}~I_{n})]$, then there exists a $P\in \mathbb{S}_{+}^{n}$ such that
$$
P[A+(B~C)\Theta]+[A+(B~C)\Theta]^{\top}P+[(0~I_{n})\Theta]^{\top}P[(0~I_{n})\Theta]<0,
$$
which yields
$$
P[A+(B~C)\Theta]+[A+(B~C)\Theta]^{\top}P<0.
$$
Thus, $[A,0_{n\times n};(B~C),0_{n\times (n+m)}]$ is $L^{2}$-stabilizable, which is equivalent to that the matrix $\left(A-\lambda I,(B~C)\right)$ is of full rank for any $\lambda\in\mathbb{C}$ with $\operatorname{Re}\lambda \geqslant0$ (see \cite{Li et al. 2010}). So, by letting $\lambda=0$, it holds that the matrix $\left(A,B,C\right)$ has rank $n$. Then the feasible set H of Problem (S) is a non-empty closed convex set. Further, since $\Sigma>0$,
$$
F(y,z,u)\geqslant\frac{1}{2}[\langle Qy,y\rangle+\langle Nz,z\rangle+\langle Ru,u\rangle+2\langle q,y\rangle+2\langle \mathbf{n},z\rangle+2\langle r,u\rangle]
$$
is coercive on $\mathbb{R}^{n}\times\mathbb{R}^{n}\times\mathbb{R}^{m}$ due to (H2). Therefore, Problem (S) admits a unique solution.

Also, since $\left(A,B,C\right)$ is of full rank, we can obtain the optimal solution $\left(y^{*},z^{*},u^{*}\right)$ by the Lagrange multiplier method (see \cite{Yong 2018}). We denote
$$
L(y,z,u,\lambda)=F(y,z,u)+\lambda^{\top}(Ay+Bu+Cz+b).
$$
Then the unique optimal solution $\left(y^{*},z^{*},u^{*}\right)$ can be given by
\begin{equation}\label{Lyzu}
\begin{aligned}
&0=L_{y}\left(y^{*},z^{*},u^{*},\lambda^{*}\right)^{\top}=Qy^{*}+q+A^{\top}\lambda^{*},\\
&0=L_{z}\left(y^{*},z^{*},u^{*},\lambda^{*}\right)^{\top}=\left(N+\Sigma^{-1}\right)z^{*}+\mathbf{n}+C^{\top}\lambda^{*},\\
&0=L_{u}\left(y^{*},z^{*},u^{*},\lambda^{*}\right)^{\top}=Ru^{*}+r+B^{\top}\lambda^{*},\\
\end{aligned}
\end{equation}
which is actually \eqref{*}. Afterwards, we rewrite \eqref{Lyzu} as
$$
\left(\begin{array}{ccc}
Q &               &  \\
  & N+\Sigma^{-1} &  \\
  &               & R
\end{array}\right)\left(\begin{array}{c}
y^{*} \\
z^{*} \\
u^{*}
\end{array}\right)+\left(\begin{array}{c}
q \\
\mathbf{n} \\
r
\end{array}\right)+\left(\begin{array}{c}
A^{\top} \\
C^{\top} \\
B^{\top}
\end{array}\right)\lambda^{*}=0.
$$
Since the coefficient matrix is invertible, there exists a unique solution $\left(y^{*},z^{*},u^{*}\right)$ and
$$
\left(\begin{array}{c}
y^{*} \\
z^{*} \\
u^{*}
\end{array}\right)=-\left(\begin{array}{ccc}
Q &               &  \\
  & N+\Sigma^{-1} &  \\
  &               & R
\end{array}\right)^{-1}\left[\left(\begin{array}{c}
q \\
\mathbf{n} \\
r
\end{array}\right)+\left(\begin{array}{c}
A^{\top} \\
C^{\top} \\
B^{\top}
\end{array}\right)\lambda^{*}\right].
$$
Recalling the equality constraint, we can obtain that
$$
-b=Ay^{*}+Cz^{*}+Bu^{*}=-(A,C,B)\left(\begin{array}{ccc}
Q &               &  \\
  & N+\Sigma^{-1} &  \\
  &               & R
\end{array}\right)^{-1}\left[\left(\begin{array}{c}
q \\
\mathbf{n} \\
r
\end{array}\right)+\left(\begin{array}{c}
A^{\top} \\
C^{\top} \\
B^{\top}
\end{array}\right)\lambda^{*}\right].
$$
Since $(A,C,B)\in \mathbb{R}^{n\times(2n+m)}$ has rank $n$, we claim that $\lambda^{*}$ is uniquely determined by the following:
$$
\begin{aligned}
\lambda^{*}=&\left[(A,C,B)\left(\begin{array}{ccc}
Q &               &  \\
  & N+\Sigma^{-1} &  \\
  &               & R
\end{array}\right)^{-1}\left(\begin{array}{c}
A^{\top} \\
C^{\top} \\
B^{\top}
\end{array}\right)\right]^{-1}\\
&\cdot\left[b-(A,C,B)\left(\begin{array}{ccc}
Q &               &  \\
  & N+\Sigma^{-1} &  \\
  &               & R
\end{array}\right)^{-1}\left(\begin{array}{c}
q \\
\mathbf{n} \\
r
\end{array}\right)\right]
\end{aligned}
$$
Hence, $\left(y^{*},z^{*},u^{*}\right)$ is uniquely determined by \eqref{*}.
\end{proof}

Let $\left(y^{*},z^{*},u^{*}\right)$ be the solution of Problem (S) and $\lambda^{*} \in \mathbb{R}^{n}$ the corresponding Lagrange multiplier in \eqref{*}. Now we present the following main results of this paper, whose proofs are given in the Section 4. The first main theorem characterizes the weak turnpike property of Problem (BLQ)$_{T}$.
\begin{theorem}\label{th-4}
Let (H1)-(H2) hold. There exist positive constants $K_{1},\mu$, independent of $T$, such that
$$
\left|\mathbb{E}\left[\bar{Y}_{T}(t)-y^*\right]\right|+\left|\mathbb{E}\left[\bar{u}_{T}(t)-u^*\right]\right|+\left|\mathbb{E}\left[\bar{X}_{T}(t)+\lambda^*\right]\right| \leqslant K_{1}\left[e^{-\mu t}+e^{-\mu(T-t)}\right], \quad \forall t\in[0,T].
$$
\end{theorem}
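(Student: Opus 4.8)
The plan is to realise the optimal triple as a constant ``static'' part plus correction processes that decay exponentially away from each endpoint of $[0,T]$, by combining the explicit feedback representation of Lemma~\ref{le-1} with the $L^{2}$-stability established in Lemma~\ref{th-1}. Throughout, $K,\mu>0$ denote constants independent of $T$ that may change from line to line.

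\emph{Step 1: reduction and Riccati decay.} Since the cost \eqref{cost} carries no $Y(0)$-terms, and (H2) guarantees applicability, I would apply Lemma~\ref{le-1} to Problem~(BLQ)$_{T}$ with constant coefficients and $\mathbf{G}=0$, $\mathbf{g}=0$: part (ii) provides the adjoint FBSDE for $(\bar Y_{T},\bar Z_{T},\bar u_{T},\bar X_{T})$ with $\bar X_{T}(0)=0$ and $R\bar u_{T}=B^{\top}\bar X_{T}-r$, part (iii) gives the differential Riccati solution $\mathbf{\Sigma}(\cdot)$ (with $\mathbf{\Sigma}(T)=0$ and $\mathbf{\Sigma}(t)>0$ for $t<T$), and part (iv) gives $\bar Y_{T}(t)=-\mathbf{\Sigma}(t)\bar X_{T}(t)-\boldsymbol{\varphi}(t)$, $\bar u_{T}(t)=R^{-1}(B^{\top}\bar X_{T}(t)-r)$, where $(\boldsymbol{\varphi},\beta)$ solves the associated BSDE with $\boldsymbol{\varphi}(T)=-\xi$. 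Put $\varphi^{*}:=(A+\Sigma Q)^{-1}\bigl(\Sigma q+b-C(I_{n}+\Sigma N)^{-1}\Sigma\mathbf{n}-BR^{-1}r\bigr)$, which is well defined because $A+\Sigma Q=-A_{\Sigma}^{\top}$ and $A_{\Sigma}$ is Hurwitz (Lemma~\ref{th-1} with Lemma~\ref{le-2}). A routine computation from \eqref{*}, the constraint $Ay^{*}+Bu^{*}+Cz^{*}+b=0$ and the ARE \eqref{Sigma} then yields the consistency identity $y^{*}=\Sigma\lambda^{*}-\varphi^{*}$. The first substantial ingredient I would prove is the exponential decay of the Riccati map, $|\mathbf{\Sigma}(t)-\Sigma|\le Ke^{-\mu(T-t)}$ on $[0,T]$ together with $\sup_{[0,T]}|\mathbf{\Sigma}(\cdot)|\le K$ uniformly in $T$; for this I would pass to $\mathbf{P}(\cdot)=\mathbf{\Sigma}(\cdot)^{-1}$, which satisfies the Riccati equation of a standard forward LQ problem (cf.\ \eqref{P}), and invoke (H1)--(H2) to obtain exponential convergence of $\mathbf{P}(\cdot)$ to $P=\Sigma^{-1}$.

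\emph{Step 2: the BSDE estimate (the hard part).} Because the adjoint here is a BSDE rather than an ODE, the standard a priori bounds for $(\boldsymbol{\varphi},\beta)$ deteriorate as $T\to\infty$; this is the main obstacle. Instead I would prove directly that $\mathbb{E}|\boldsymbol{\varphi}(t)-\varphi^{*}|^{2}\le Ke^{-\mu(T-t)}$, hence $|\mathbb{E}[\boldsymbol{\varphi}(t)-\varphi^{*}]|\le Ke^{-\mu(T-t)}$. Setting $\psi:=\boldsymbol{\varphi}-\varphi^{*}$, Step 1 shows $\psi$ solves a linear BSDE $d\psi=[-A_{\Sigma}^{\top}\psi-C_{\Sigma}^{\top}\beta+g]dt+\beta\,dW$ with $\psi(T)=-\xi-\varphi^{*}$ bounded and $|g(t)|\le K|\mathbf{\Sigma}(t)-\Sigma|\,(1+|\boldsymbol{\varphi}(t)|+|\beta(t)|)\le Ke^{-\mu(T-t)}(1+|\boldsymbol{\varphi}(t)|+|\beta(t)|)$. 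Pairing $\psi$ with the fundamental solution $\Phi_{t}(\cdot)$ of the system $[A_{\Sigma},C_{\Sigma}]$ started from $I_{n}$ at time $t$ and applying It\^o, the $A_{\Sigma}$- and $C_{\Sigma}$-contributions cancel exactly, which leaves $\langle x,\mathbb{E}[\psi(t)]\rangle=\mathbb{E}\langle\Phi_{t}(T)x,\psi(T)\rangle-\mathbb{E}\int_{t}^{T}\langle\Phi_{t}(s)x,g(s)\rangle\,ds$ for every $x\in\mathbb{R}^{n}$; since $[A_{\Sigma},C_{\Sigma}]$ is autonomous and $L^{2}$-stable, Lemma~\ref{le-2} yields $\mathbb{E}|\Phi_{t}(s)|^{2}\le Ke^{-\mu(s-t)}$, and a Cauchy--Schwarz estimate against the bound on $g$ gives the claimed decay --- provided one also has the $T$-uniform a priori bounds $\sup_{t}\mathbb{E}|\boldsymbol{\varphi}(t)|^{2}\le K$ and $\mathbb{E}\int_{0}^{T}e^{-\mu(T-s)}|\beta(s)|^{2}\,ds\le K$, which I would obtain by an It\^o/Lyapunov argument based on the matrix $P_{0}\in\mathbb{S}_{+}^{n}$ with $P_{0}A_{\Sigma}+A_{\Sigma}^{\top}P_{0}+C_{\Sigma}^{\top}P_{0}C_{\Sigma}<0$ from Lemma~\ref{le-2}(v).

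\emph{Step 3: closing the argument.} From $Ru^{*}+B^{\top}\lambda^{*}+r=0$ one has $\bar u_{T}(t)-u^{*}=R^{-1}B^{\top}(\bar X_{T}(t)+\lambda^{*})$, and from the consistency identity $\bar Y_{T}(t)-y^{*}=-\mathbf{\Sigma}(t)(\bar X_{T}(t)+\lambda^{*})+(\mathbf{\Sigma}(t)-\Sigma)\lambda^{*}-(\boldsymbol{\varphi}(t)-\varphi^{*})$, so everything reduces to estimating $\eta(t):=\bar X_{T}(t)+\lambda^{*}$. Inserting $\bar X_{T}=\eta-\lambda^{*}$, $\boldsymbol{\varphi}=\psi+\varphi^{*}$ and $\bar Y_{T}=-\mathbf{\Sigma}\bar X_{T}-\boldsymbol{\varphi}$ into the $\bar X_{T}$-equation and taking expectations (which kills the diffusion), the constant drift terms cancel thanks to $Qy^{*}+A^{\top}\lambda^{*}+q=0$ and the consistency identity, leaving $\frac{d}{dt}\mathbb{E}[\eta(t)]=-(A^{\top}+Q\mathbf{\Sigma}(t))\mathbb{E}[\eta(t)]-Q\mathbb{E}[\psi(t)]+Q(\mathbf{\Sigma}(t)-\Sigma)\lambda^{*}$ with $\mathbb{E}[\eta(0)]=\lambda^{*}$. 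Since $-(A^{\top}+Q\mathbf{\Sigma}(t))=A_{\Sigma}-Q(\mathbf{\Sigma}(t)-\Sigma)$ is $A_{\Sigma}$ (Hurwitz) perturbed by a term that is $L^{1}$ in time uniformly in $T$ (Step 1), its transition matrix satisfies $|\Psi(t,s)|\le Ke^{-\mu(t-s)}$ uniformly in $T$; the inhomogeneity is $O(e^{-\mu(T-t)})$ by Steps 1--2; and the initial value $\lambda^{*}$ is the $O(1)$ source of the boundary layer at $t=0$. Variation of constants then gives $|\mathbb{E}[\eta(t)]|\le Ke^{-\mu t}+K\int_{0}^{t}e^{-\mu(t-s)}e^{-\mu(T-s)}\,ds\le K(e^{-\mu t}+e^{-\mu(T-t)})$. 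Plugging this, $\sup_{[0,T]}|\mathbf{\Sigma}(\cdot)|\le K$ and the Step-2 bound back into the three displayed expressions for $\bar u_{T}-u^{*}$, $\bar Y_{T}-y^{*}$ and $\bar X_{T}+\lambda^{*}$ yields the theorem; the two exponentials are exactly the boundary layers at $t=0$ (because $\bar X_{T}(0)=0\ne-\lambda^{*}$) and at $t=T$ (because $\boldsymbol{\varphi}(T)=-\xi\ne-\varphi^{*}$ and $\mathbf{\Sigma}(T)=0\ne\Sigma$).
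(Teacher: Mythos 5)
Your proposal is correct in its overall architecture, which coincides with the paper's: decay of the finite-horizon Riccati solution to $\Sigma$, an exponential estimate for the adjoint BSDE, and then an ODE-with-boundary-layers argument for $\mathbb{E}[\bar X_{T}(t)+\lambda^{*}]$. The consistency identity $y^{*}=\Sigma\lambda^{*}-\varphi^{*}$ and the cancellation leaving the source term $Q(\mathbf{\Sigma}(t)-\Sigma)\lambda^{*}$ both check out against the ARE \eqref{Sigma} and \eqref{*}. The differences are in execution. First, the paper does not work with the unshifted $(\boldsymbol{\varphi},\beta)$ and a stationary point $\varphi^{*}$: it first shifts the whole problem by $(y^{*},z^{*},u^{*},\lambda^{*})$ (display \eqref{widehat} and Lemma \ref{le-6}), so that the resulting $\widehat{\varphi}_{T}$ itself decays to zero; your $\psi=\boldsymbol{\varphi}-\varphi^{*}$ equals $\widehat{\varphi}_{T}+(\Sigma_{T}(t)-\Sigma)\lambda^{*}$, so the two are equivalent. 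Second, for the BSDE estimate the paper (Lemma \ref{le-7}) proves the stronger pathwise bound $|\widehat{\varphi}_{T}(t)|^{2}\leqslant K_{5}e^{-2\theta(T-t)}$ via a Lyapunov functional $e^{2\theta(T-s)}\langle P\widehat{\varphi}_{T},\widehat{\varphi}_{T}\rangle$ and conditional Gronwall, whereas your duality pairing with the fundamental solution of $[A_{\Sigma},C_{\Sigma}]$ only delivers $|\mathbb{E}[\psi(t)]|\leqslant Ke^{-\mu(T-t)}$ (not the $L^{2}$ bound you state before the word ``hence''); that first-moment bound is enough for Theorem \ref{th-4}, but note the pathwise bound is what gets reused for Theorems \ref{th-5} and \ref{th-6}, and your route still owes the $T$-uniform a priori bounds on $\sup_{t}\mathbb{E}|\boldsymbol{\varphi}(t)|^{2}$ and the weighted $\beta$-integral, which cost essentially the same Lyapunov computation. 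Third, and this is the one step that does not work as written: you propose to obtain $|\mathbf{\Sigma}(t)-\Sigma|\leqslant Ke^{-\mu(T-t)}$ by inverting to $\mathbf{P}(\cdot)=\mathbf{\Sigma}(\cdot)^{-1}$ and quoting forward-LQ Riccati convergence. But $\mathbf{\Sigma}(T)=0$, so $\mathbf{P}$ blows up at $t=T$ and does not correspond to a forward LQ problem with a finite terminal weight; moreover convergence of $\mathbf{P}$ to $P$ transfers to convergence of $\mathbf{\Sigma}$ to $\Sigma$ only where $\mathbf{\Sigma}$ is uniformly bounded below. This is repairable (by monotonicity one has $\mathbf{\Sigma}(t)\geqslant\Sigma_{T}(T-t_{0})>0$ for $t\leqslant T-t_{0}$ uniformly in $T$, and the estimate is trivial on $[T-t_{0},T]$), but the paper avoids the issue entirely by working directly with $\Pi_{T}=\Sigma-\Sigma_{T}$, showing it satisfies a Riccati-type equation driven by the $L^{2}$-stable system $[A_{\Sigma},C_{\Sigma}]$ (Theorem \ref{th-3}). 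Finally, your variation-of-constants estimate for $\mathbb{E}[\eta]$ with an integrably perturbed Hurwitz matrix is a legitimate substitute for the paper's quadratic Lyapunov function $\langle\Sigma\,\cdot\,,\cdot\rangle$ and comparison argument.
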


The second main theorem states that the value function $V_{T}(\xi)$ of Problem (BLQ)$_{T}$ is close (in the sense of time-average) to the optimal value $V$ of Problem (S) where
$$
V=\frac{1}{2}[\langle Qy^{*},y^{*}\rangle+\langle(N+\Sigma^{-1})z^{*},z^{*}\rangle+\langle Ru^{*},u^{*}\rangle+2\langle q,y^{*}\rangle+2\langle \mathbf{n},z^{*}\rangle+2\langle r,u^{*}\rangle].
$$
\begin{theorem}\label{th-5}
Let (H1)-(H2) hold. It holds for some positive constant $K_{2}$ independent of $T$ that
$$
\left|\frac{1}{T}V_{T}(\xi)-V\right|\leqslant K_{2}\left(\frac{1}{T}+\frac{1}{T^{\frac{1}{2}}}\right).
$$
\end{theorem}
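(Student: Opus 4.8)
The plan is to start from the closed-form value function of Lemma~\ref{le-1}(v), specialized to the constant-coefficient Problem~(BLQ)$_T$ with $\mathbf{G}=0$ and $\mathbf{g}=0$. Writing $\Sigma_T(\cdot)$ for the solution of the differential Riccati equation with $\Sigma_T(T)=0$ and $(\varphi_T,\beta_T)$ for the decoupling pair of Lemma~\ref{le-1}(iv) (so that $\varphi_T(T)=-\xi$), that formula reads
$$
\begin{aligned}
V_T(\xi)=\frac12\mathbb{E}\int_0^T\Big[&\langle Q\varphi_T,\varphi_T\rangle-2\langle q,\varphi_T\rangle+\langle(I_n+N\Sigma_T)^{-1}N\beta_T,\beta_T\rangle\\
&-\langle\mathbf{n},(I_n+\Sigma_T N)^{-1}\beta_T\rangle-\langle(I_n+\Sigma_T N)^{-1}\Sigma_T\mathbf{n},\mathbf{n}\rangle-\langle R^{-1}r,r\rangle\Big]dt.
\end{aligned}
$$
As a first input I would use the exponential stabilization of the Riccati flow, $\|\Sigma_T(t)-\Sigma\|\le Ce^{-\gamma(T-t)}$ with $C,\gamma>0$ independent of $T$ (the exponential convergence of the differential Riccati equation towards the ARE solution $\Sigma$, obtained from (H1)-(H2) in Section~4); in particular $\sup_{t,T}\|\Sigma_T(t)\|<\infty$, so the purely deterministic part of the integrand differs from its stationary value $-\langle(I_n+\Sigma N)^{-1}\Sigma\mathbf{n},\mathbf{n}\rangle-\langle R^{-1}r,r\rangle$ by a term whose integral over $[0,T]$ is $O(1)$.

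Next I would bring in the stationary profile of the $\varphi_T$-equation. Since $[A_\Sigma,C_\Sigma]$ is $L^2$-stable (Lemma~\ref{th-1}), $A+\Sigma Q=-A_\Sigma^{\top}$ is invertible, and the constant
$$
\bar\varphi:=(A+\Sigma Q)^{-1}\big[\Sigma q+b-C(I_n+\Sigma N)^{-1}\Sigma\mathbf{n}-BR^{-1}r\big]
$$
solves the equation obtained by setting the $\varphi_T$-drift to zero; using \eqref{*} and \eqref{Sigma} one checks $\bar\varphi=\Sigma\lambda^*-y^*$. Setting $\tilde\varphi_T:=\varphi_T-\bar\varphi$, the pair $(\tilde\varphi_T,\beta_T)$ solves a linear BSDE with bounded terminal datum $-\xi-\bar\varphi$ and drift perturbation $g_T$ obeying $|g_T(t)|\le C\|\Sigma_T(t)-\Sigma\|\le Ce^{-\gamma(T-t)}$. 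The key step is the uniform-in-$T$ estimate
$$
\sup_{0\le t\le T}\mathbb{E}|\tilde\varphi_T(t)|^2+\mathbb{E}\int_0^T\big(|\tilde\varphi_T(t)|^2+|\beta_T(t)|^2\big)dt\le C.
$$
Let $P_0\in\mathbb{S}^{n}_{+}$ be the Lyapunov matrix of Lemma~\ref{le-2}(v) for $[A_\Sigma,C_\Sigma]$, so $P_0A_\Sigma+A_\Sigma^{\top}P_0+C_\Sigma^{\top}P_0C_\Sigma\le-\epsilon I_n$. Applying Itô's formula to $\langle P_0^{-1}\tilde\varphi_T,\tilde\varphi_T\rangle$ and completing the square in $\beta_T$, this inequality forces the $dt$-integrand to be bounded below by $c_1|\beta_T|^2+c_2|\tilde\varphi_T|^2-C|g_T|^2$ wherever $\|\Sigma_T(t)-\Sigma\|$ is below a fixed threshold; on a terminal window $[T-T_0,T]$ of fixed length $T_0$ (chosen, via the exponential decay of $\Sigma_T-\Sigma$, so that this threshold holds on $[0,T-T_0]$) the ordinary a priori estimate for linear BSDEs bounds $\sup_{[T-T_0,T]}\mathbb{E}|\tilde\varphi_T|^2$ and $\mathbb{E}\int_{T-T_0}^T|\beta_T|^2$ by $\mathbb{E}|\xi+\bar\varphi|^2$ up to a constant depending only on $T_0$. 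Integrating the Itô inequality over $[0,T-T_0]$, using $\mathbb{E}|\tilde\varphi_T(T-T_0)|^2\le C(T_0)$ from the window estimate and $\int_0^T|g_T|^2\le C$, gives the displayed bound. Independently, combining Theorem~\ref{th-4} with $\varphi_T(t)=-\Sigma_T(t)\bar{X}_T(t)-\bar{Y}_T(t)$ and $\|\Sigma_T(t)-\Sigma\|\le Ce^{-\gamma(T-t)}$ yields $|\mathbb{E}[\varphi_T(t)]-\bar\varphi|\le C(e^{-\mu t}+e^{-\mu(T-t)})$, hence $\int_0^T|\mathbb{E}[\tilde\varphi_T(t)]|\,dt\le C$.

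With these ingredients the result follows by substitution and tracking the powers of $T$. Expanding the quadratic term, $\mathbb{E}\int_0^T\langle Q\varphi_T,\varphi_T\rangle dt=T\langle Q\bar\varphi,\bar\varphi\rangle+2\langle Q\bar\varphi,\int_0^T\mathbb{E}[\tilde\varphi_T]dt\rangle+\mathbb{E}\int_0^T\langle Q\tilde\varphi_T,\tilde\varphi_T\rangle dt=T\langle Q\bar\varphi,\bar\varphi\rangle+O(1)$, and likewise $\mathbb{E}\int_0^T\langle(I_n+N\Sigma_T)^{-1}N\beta_T,\beta_T\rangle dt=O(1)$, $\mathbb{E}\int_0^T\langle q,\varphi_T\rangle dt=T\langle q,\bar\varphi\rangle+O(1)$, and the deterministic part equals $T$ times a constant plus $O(1)$; all of these contribute $O(1/T)$ to $\tfrac1TV_T(\xi)$. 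The only non-$O(1)$ term is the $\beta_T$-linear one: by Cauchy--Schwarz, $\big|\mathbb{E}\int_0^T\langle\mathbf{n},(I_n+\Sigma_T N)^{-1}\beta_T\rangle dt\big|\le\big(\int_0^T|(I_n+N\Sigma_T)^{-1}\mathbf{n}|^2dt\big)^{1/2}\big(\mathbb{E}\int_0^T|\beta_T|^2dt\big)^{1/2}\le C\,T^{1/2}$, which is the source of the $T^{-1/2}$ error. Collecting everything,
$$
\tfrac1TV_T(\xi)=\tfrac12\big[\langle Q\bar\varphi,\bar\varphi\rangle-2\langle q,\bar\varphi\rangle-\langle(I_n+\Sigma N)^{-1}\Sigma\mathbf{n},\mathbf{n}\rangle-\langle R^{-1}r,r\rangle\big]+O\big(\tfrac1T+\tfrac1{T^{1/2}}\big),
$$
so it remains to verify that the bracket equals $2V$. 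Inserting $\bar\varphi=\Sigma\lambda^*-y^*$ and repeatedly using \eqref{*}, the constraint $Ay^*+Bu^*+Cz^*+b=0$, the ARE \eqref{Sigma}, and the identity $(N+\Sigma^{-1})(I_n+\Sigma N)^{-1}\Sigma=I_n$, the bracket collapses to $\langle b,\lambda^*\rangle+\langle q,y^*\rangle+\langle\mathbf{n},z^*\rangle+\langle r,u^*\rangle$, which by the same relations in \eqref{*} equals $2V$. The main obstacle is the uniform-in-$T$ second-moment estimate for $(\tilde\varphi_T,\beta_T)$: since no infinite-horizon stability theory for controlled BSDEs is available, this backward dissipativity must be extracted by hand from the $L^2$-stability of $[A_\Sigma,C_\Sigma]$, with the mismatch between $\Sigma_T$ and $\Sigma$ near the terminal time absorbed through a fixed-length terminal window and ordinary BSDE a priori estimates.
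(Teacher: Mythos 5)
Your proposal is correct, and its engine is the same as the paper's: the explicit value-function representation of Lemma~\ref{le-1}(v), the exponential convergence $|\Sigma_T(t)-\Sigma|\leqslant K_4e^{-2\sigma(T-t)}$ of Theorem~\ref{th-3}, a uniform-in-$T$ estimate for the adjoint BSDE pair extracted from the $L^2$-stability of $[A_\Sigma,C_\Sigma]$ via a Lyapunov/It\^o argument with an $\varepsilon$ of room to retain a fraction of $|\beta_T|^2$ (this is exactly the paper's Lemma~\ref{le-7}, where the Lyapunov matrix is $P=\Sigma^{-1}$ rather than your $P_0^{-1}$, and where the perturbation multiplying $\widehat{\varphi}_T$ and $\widehat{\beta}_T$ is handled by Gronwall with the integrable weight $e^{-2\sigma(T-s)}$ instead of your fixed terminal window), the integrated weak-turnpike bounds of Corollary~\ref{co-2}, and Cauchy--Schwarz on the $\beta_T$-linear term as the unique source of the $T^{-1/2}$. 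The genuine difference is in the bookkeeping of the constant: the paper first recenters the whole problem at $(y^*,z^*,u^*,\lambda^*)$, observes that $(\widehat{Y}_T,\bar{Z}_T,\widehat{u}_T)$ optimally solves the shifted problem with cost \eqref{cost_1}, and reads off $V$ from the resulting representation of $\tfrac1TV_T(\xi)-V$; you instead work with the unshifted $(\varphi_T,\beta_T)$, introduce the stationary profile $\bar\varphi=\Sigma\lambda^*-y^*$ explicitly, and identify the leading constant with $2V$ by a direct algebraic computation (which checks out: both sides reduce to $\langle b,\lambda^*\rangle+\langle q,y^*\rangle+\langle\mathbf{n},z^*\rangle+\langle r,u^*\rangle$ using \eqref{*}, the constraint and the ARE). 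Your route costs an extra algebraic verification at the end but avoids setting up the auxiliary shifted LQ problem; the paper's shift makes the constant emerge automatically but requires re-deriving the optimality system for the shifted data. Two small points of looseness, neither fatal: your bound $|g_T(t)|\leqslant C|\Sigma_T(t)-\Sigma|$ should be read as applying only to the pure inhomogeneity, since the coefficient mismatch also multiplies $\tilde\varphi_T$ and $\beta_T$ (you do address absorbing those into the coercive terms); and the claim $\int_0^T|\mathbb{E}[\tilde\varphi_T(t)]|\,dt\leqslant C$ correctly leans on Theorem~\ref{th-4}, which is proved independently of this theorem, so there is no circularity.
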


Let $X^{*}_{T}(t)$ be the unique solution to the SDE:
\begin{equation}\label{SDE-2}
\left\{\begin{array}{l}
dX^{*}_{T}(t)=\left[-\left(A+\Sigma Q\right)^{\top}X^{*}_{T}(t)\right]dt+\left[-\left(C(I_{n}+\Sigma N)^{-1}\right)^{\top}X^{*}_{T}(t)-\Sigma^{-1}z^{*}\right]dW_{t},\quad t\in[0,T]\\
X^{*}_{T}(0)=0,
\end{array}\right.
\end{equation}
where $\Sigma$ is the unique solution of ARE \eqref{Sigma}. Now we define
$$
\begin{array}{ll}
\bar{X}^{*}_{T}(t)=X^{*}_{T}(t)-\lambda^{*},&\bar{u}^{*}_{T}(t)=R^{-1}B^{\top}X^{*}_{T}(t)+u^{*}\\
\bar{Y}^{*}_{T}(t)=-\Sigma X^{*}_{T}(t)+y^{*},&\bar{Z}^{*}_{T}(t)=(I+\Sigma N)^{-1}\Sigma C^{\top}X^{*}_{T}(t)+z^{*}.
\end{array}
$$
Our third main theorem establishes the strong turnpike property of Problem (BLQ)$_{T}$.

\begin{theorem}\label{th-6}
Let (H1)-(H2) hold. There exist positive constants $K_{3},\zeta$, independent of $T$, such that
$$
\mathbb{E}\left[|\bar{Y}_{T}(t)-\bar{Y}^{*}_{T}(t)|^{2}\right]+\mathbb{E}\left[|\bar{u}_{T}(t)-\bar{u}^{*}_{T}(t)|^{2}\right]+\mathbb{E}\left[|\bar{X}_{T}(t)-\bar{X}^{*}_{T}(t)|^{2}\right]\leqslant K_{3}\left[e^{-\zeta t}+e^{-\zeta(T-t)}\right], \quad \forall t\in[0,T].
$$
\end{theorem}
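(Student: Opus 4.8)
\noindent\emph{Proof proposal.} The plan is to decouple the optimality system of Problem $(\mathrm{BLQ})_{T}$ by the associated differential Riccati equation, identify the correct stationary counterpart, and reduce the whole statement to one exponential decay estimate for the error of the adjoint process. Since Problem $(\mathrm{BLQ})_{T}$ corresponds to $\mathbf G=0,\ \mathbf g=0$, Lemma~\ref{le-1}(ii)--(iv) applies with the differential Riccati solution $\Sigma_{T}(\cdot)$ on $[0,T]$ ($\Sigma_{T}(T)=0$), giving $\bar X_{T}(0)=0$, $\bar Y_{T}=-\Sigma_{T}\bar X_{T}-\varphi_{T}$, $\bar u_{T}=R^{-1}(B^{\top}\bar X_{T}-r)$, together with the BSDE for $(\varphi_{T},\beta_{T})$. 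Substituting into the $\bar X_{T}$-equation of Lemma~\ref{le-1}(ii) and simplifying with the identities $N(I+\Sigma_{T}N)^{-1}=(I+N\Sigma_{T})^{-1}N$ and $I-N(I+\Sigma_{T}N)^{-1}\Sigma_{T}=(I+N\Sigma_{T})^{-1}$, one finds that $\bar X_{T}$ solves a forward linear SDE with coefficients $A_{\Sigma_{T}}(t)=-(A+\Sigma_{T}(t)Q)^{\top}$, $C_{\Sigma_{T}}(t)=-[C(I+\Sigma_{T}(t)N)^{-1}]^{\top}$ plus inhomogeneous terms linear in $\varphi_{T},\beta_{T}$; replacing $\Sigma_{T}(t)$ by the ARE solution $\Sigma$ turns these into the $A_{\Sigma},C_{\Sigma}$ of Lemma~\ref{th-1}. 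Using \eqref{*} and ARE~\eqref{Sigma} one checks that $\varphi^{*}:=\Sigma\lambda^{*}-y^{*}$, $\beta^{*}:=0$ solve the ``$\Sigma_{T}\equiv\Sigma$'' version of that BSDE, that $-Q\varphi^{*}+q=A_{\Sigma}\lambda^{*}$ and $(I+N\Sigma)^{-1}\mathbf n=C_{\Sigma}\lambda^{*}-\Sigma^{-1}z^{*}$, and hence that $\bar X^{*}_{T}=X^{*}_{T}-\lambda^{*}$, with $X^{*}_{T}$ the solution of \eqref{SDE-2}, solves exactly the SDE obtained from the $\bar X_{T}$-equation by the replacements $\Sigma_{T}\to\Sigma$, $\varphi_{T}\to\varphi^{*}$, $\beta_{T}\to0$, while $\bar Y^{*}_{T}=-\Sigma\bar X^{*}_{T}-\varphi^{*}$ and $\bar u^{*}_{T}=R^{-1}(B^{\top}\bar X^{*}_{T}-r)$.

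Subtracting the two SDEs, the error $e_{T}:=\bar X_{T}-\bar X^{*}_{T}$ satisfies a linear SDE
\[
de_{T}=\big[A_{\Sigma_{T}}(t)e_{T}+R_{1}(t)\big]\,dt+\big[C_{\Sigma_{T}}(t)e_{T}+R_{2}(t)\big]\,dW_{t},\qquad e_{T}(0)=\lambda^{*},
\]
where the constant terms cancel by \eqref{*} and $R_{1},R_{2}$ are bounded-coefficient combinations of $(\Sigma_{T}(t)-\Sigma)\bar X^{*}_{T}(t)$, $(\Sigma_{T}(t)-\Sigma)$ and $\varphi_{T}(t)-\varphi^{*}$, respectively $\beta_{T}(t)$. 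Since $\bar Y_{T}-\bar Y^{*}_{T}=-\Sigma_{T}e_{T}-(\Sigma_{T}-\Sigma)\bar X^{*}_{T}-(\varphi_{T}-\varphi^{*})$ and $\bar u_{T}-\bar u^{*}_{T}=R^{-1}B^{\top}e_{T}$, and $\sup_{T}\sup_{t\le T}\mathbb E|\bar X^{*}_{T}(t)|^{2}<\infty$ by the $L^{2}$-stability of $[A_{\Sigma},C_{\Sigma}]$ (Lemma~\ref{th-1}), it suffices to bound $\mathbb E|e_{T}(t)|^{2}$, $\mathbb E|\varphi_{T}(t)-\varphi^{*}|^{2}$ and $|\Sigma_{T}(t)-\Sigma|^{2}$ by $K[e^{-\zeta t}+e^{-\zeta(T-t)}]$. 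For the last one I use the exponential decay estimate $|\Sigma_{T}(t)-\Sigma|\le Ke^{-\mu(T-t)}$ for the differential Riccati equation, established as a preliminary result. The heart of the matter is the BSDE estimate for $\psi:=\varphi_{T}-\varphi^{*}$, which solves a linear BSDE with bounded coefficients, terminal value $-\xi-\varphi^{*}\in L^{\infty}_{\mathcal F_{T}}$, and inhomogeneous term of order $|\Sigma_{T}(t)-\Sigma|=O(e^{-\mu(T-t)})$. The structural point is that $A_{\Sigma}=-(A+\Sigma Q)^{\top}$ is Hurwitz — a consequence of Lemma~\ref{th-1} and Lemma~\ref{le-2}(v) — so $A+\Sigma Q$ has spectrum in the open right half-plane and the BSDE contracts backward; applying It\^o's formula to $\langle\Sigma^{-1}\psi,\psi\rangle$ and using the completion-of-squares inequality behind Lemma~\ref{th-1}, which after conjugation by $\Sigma^{-1}$ reads $\Sigma^{-1}(A+\Sigma Q)+(A+\Sigma Q)^{\top}\Sigma^{-1}-\Sigma^{-1}C(I+\Sigma N)^{-1}\Sigma(I+N\Sigma)^{-1}C^{\top}\Sigma^{-1}>0$, the $\beta_{T}$-cross term is exactly absorbed. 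Treating the $\Sigma_{T}-\Sigma$ perturbations as exponentially small, this gives $\tfrac{d}{dt}\mathbb E\langle\Sigma^{-1}\psi(t),\psi(t)\rangle\ge c\,\mathbb E\langle\Sigma^{-1}\psi(t),\psi(t)\rangle-Ce^{-2\mu(T-t)}$ on $[0,T-T_{0}]$ for fixed large $T_{0}$, whence by a backward Gronwall inequality $\mathbb E|\varphi_{T}(t)-\varphi^{*}|^{2}\le Ke^{-\mu(T-t)}$; returning the $\langle\Sigma^{-1}\beta_{T},\beta_{T}\rangle$ term to the left of the It\^o identity over unit subintervals then yields the averaged bound $\mathbb E\int_{t}^{(t+1)\wedge T}|\beta_{T}(s)|^{2}\,ds\le Ke^{-\mu(T-t)}$.

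With these inputs, $\mathbb E|R_{1}(t)|^{2}\le Ke^{-\mu(T-t)}$ and $\int_{t}^{(t+1)\wedge T}\mathbb E|R_{2}(s)|^{2}\,ds\le Ke^{-\mu(T-t)}$. On $[0,T-T_{0}]$, where $A_{\Sigma_{T}},C_{\Sigma_{T}}$ are uniformly close to $A_{\Sigma},C_{\Sigma}$, I apply It\^o's formula to $\langle\widehat Pe_{T},e_{T}\rangle$ for the Lyapunov matrix $\widehat P\in\mathbb S^{n}_{+}$ of Lemma~\ref{le-2}(v), absorb the perturbation and source terms by Young's inequality, obtain $\tfrac{d}{dt}\mathbb E\langle\widehat Pe_{T}(t),e_{T}(t)\rangle\le-\zeta\,\mathbb E\langle\widehat Pe_{T}(t),e_{T}(t)\rangle+C(\mathbb E|R_{1}(t)|^{2}+\mathbb E|R_{2}(t)|^{2})$, and a forward Gronwall inequality gives $\mathbb E|e_{T}(t)|^{2}\le C\big(e^{-\zeta t}|\lambda^{*}|^{2}+\int_{0}^{t}e^{-\zeta(t-s)}(\mathbb E|R_{1}(s)|^{2}+\mathbb E|R_{2}(s)|^{2})\,ds\big)$; inserting the source bounds and summing the convolution over unit intervals produces $\mathbb E|e_{T}(t)|^{2}\le K[e^{-\zeta t}+e^{-\zeta(T-t)}]$ on $[0,T-T_{0}]$. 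On $[T-T_{0},T]$, of fixed length, a crude Gronwall estimate from the value at $T-T_{0}$ keeps $\mathbb E|e_{T}(t)|^{2}$ bounded, which there is dominated by $K'e^{-\zeta(T-t)}$. Combining with $|\Sigma_{T}(t)-\Sigma|^{2}\le Ke^{-2\mu(T-t)}$ and $\mathbb E|\varphi_{T}(t)-\varphi^{*}|^{2}\le Ke^{-\mu(T-t)}$ and using the affine relations for $\bar Y_{T}-\bar Y^{*}_{T}$ and $\bar u_{T}-\bar u^{*}_{T}$ yields the assertion.

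\emph{Main obstacle.} The delicate part is the backward estimate. Unlike the (forward) stochastic-LQ case, the adjoint here is genuinely a BSDE and its terminal datum $-\xi$ is random and merely bounded — not small — so $\varphi_{T}-\varphi^{*}$ and $\beta_{T}$ cannot be small near $t=T$, only for $T-t$ large; reconciling this with the forward $e_{T}$-equation, whose source $\beta_{T}$ must be small precisely near $t=0$, is exactly what forces the two-sided decay and the careful alternation between the backward and forward Gronwall arguments. Everything else is the stochastic-LQ turnpike machinery adapted to the present coefficients.
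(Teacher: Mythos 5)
Your proposal is correct and follows the same overall strategy as the paper: decouple the optimality system by the differential Riccati equation, identify $\bar X^{*}_{T}=X^{*}_{T}-\lambda^{*}$ as the stationary counterpart through \eqref{*} and the ARE \eqref{Sigma}, prove a backward exponential-decay estimate for the adjoint BSDE, and run a forward Lyapunov--Gronwall argument on the adjoint error using the dissipativity of Lemma \ref{th-1} and the Riccati convergence of Theorem \ref{th-3}; your structural identifications ($\varphi^{*}=\Sigma\lambda^{*}-y^{*}$, $\beta^{*}=0$, the cancellation of the constant terms) all check out against \eqref{SDE-2} and the paper's definitions of $\bar Y^{*}_{T},\bar Z^{*}_{T},\bar u^{*}_{T}$. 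There are two tactical differences. First, the paper does not estimate $e_{T}=\bar X_{T}-\bar X^{*}_{T}$ directly: it splits off the mean and works with the centered process $\mathbb{X}_{T}=\widehat X_{T}-\mathbb{E}[\widehat X_{T}]-X^{*}_{T}$, applying It\^o with the Lyapunov matrix $\Sigma$ (so that $\mathbb{E}[\mathbb{X}_{T}]=0$ kills several cross terms) and recycling the weak turnpike Theorem \ref{th-4} for the mean part; you keep the full error with initial datum $\lambda^{*}$ and extract the $e^{-\zeta t}$ half of the bound from the decay of that initial condition, which is equally valid. Second, and more substantively, you upgrade the martingale-integrand estimate to a localized one, $\mathbb{E}\int_{t}^{(t+1)\wedge T}|\beta_{T}|^{2}\,ds\leqslant Ke^{-\mu(T-t)}$, before feeding it into the forward convolution, whereas the paper's Lemma \ref{le-7} only records the global bound $\mathbb{E}\int_{0}^{T}|\widehat\beta_{T}|^{2}\,dt\leqslant K_{5}$ and the proof of the theorem absorbs that source (together with $\int_{0}^{T}\mathbb{E}|\widehat X_{T}|^{2}\,dt$) into a $T$-independent constant sitting outside the time integral. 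Your localized bound follows easily from the pointwise decay of the adjoint BSDE solution via the It\^o identity over unit intervals, and it is exactly what makes the convolution $\int_{0}^{t}e^{-\zeta(t-s)}\mathbb{E}|R_{2}(s)|^{2}\,ds$ genuinely yield the two-sided exponential rate rather than mere boundedness; on this point your treatment of the $\beta_{T}$-source is more explicit and more robust than the published one. The only soft spots are presentational: ``exactly absorbed'' for the $\beta$-cross term should be ``absorbed up to an $\varepsilon$-margin'' (the strict inequality of Lemma \ref{th-1} is needed to leave room for the exponentially small perturbations), and the uniform bound $\sup_{T}\sup_{t\leqslant T}\mathbb{E}|X^{*}_{T}(t)|^{2}<\infty$ deserves the one-line Lyapunov computation the paper gives in Lemma \ref{le-8}; neither is a gap.
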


\section{The turnpike property}
\subsection{Long time behavior of solution of differential Riccati equation}
The main goal of this subsection is to investigate the long time behavior of the solution of the following differential Riccati equation
\begin{equation}\label{Sigma(t)}
  \left\{\begin{array}{l}
  \dot{\Sigma}(t)+\Sigma(t)A^{\top}+A\Sigma(t)+\Sigma(t)Q\Sigma(t)-BR^{-1}B^{\top}\\
  \quad-C\left(I_{n}+\Sigma(t)N\right)^{-1}\Sigma(t)C^{\top}=0,\quad t\in[0,\infty),\\
  \Sigma(0)=0.
  \end{array}\right.
  \end{equation}
For this aim, for each $T>0$, we introduce the following differential Riccati equation:
\begin{equation}\label{SigmaT}
\left\{\begin{array}{l}
\dot{\Sigma}_{T}(t)-\Sigma_{T}(t)A^{\top}-A\Sigma_{T}(t)-\Sigma_{T}(t)Q\Sigma_{T}(t)+BR^{-1}B^{\top}\\
\quad+C\left(I_{n}+\Sigma_{T}(t)N\right)^{-1}\Sigma_{T}(t)C^{\top}=0,\quad t\in[0,T],\\
\Sigma_{T}(T)=0.
\end{array}\right.
\end{equation}
According to Lemma \ref{le-1}, we know that under (H1)-(H2), the differential Riccati equation \eqref{SigmaT} admits a unique solution $\Sigma_{T}(\cdot)\in C\left([0,T]; \mathbb{S}^{n}\right)$ satisfying $\Sigma_{T}(t)>0$ for all $0\leqslant t<T$.

In the following theorem, we show that the differential Riccati equation \eqref{Sigma(t)} is well-posed and further provide the long time behavior.
\begin{theorem}\label{th-3}
Let (H1)-(H2) be satisfied, then the differential Riccati equation $\eqref{Sigma(t)}$ admits a unique solution $\Sigma(\cdot)\in C([0,\infty);\mathbb{S}^{n})$ satisfying
$$
0<\Sigma(s)\leqslant\Sigma(t)\leqslant\Sigma,\qquad \forall~ 0<s<t<\infty,
$$
where $\Sigma$ is the unique solution of the ARE \eqref{Sigma}. Moreover, there exist positive constants $K_{4},\sigma$ such that
\begin{equation}\label{convergence}
|\Sigma-\Sigma(t)|\leqslant K_{4}e^{-2\sigma t},\quad\forall~ t\geqslant0.
\end{equation}
\end{theorem}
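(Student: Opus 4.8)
The plan is to obtain the solution $\Sigma(\cdot)$ of \eqref{Sigma(t)} as the limit of the family $\Sigma_T(\cdot)$ solving \eqref{SigmaT}, exploiting the time-reversal structure connecting the two equations, and then to extract the exponential rate from the stability statement of Lemma \ref{th-1}. First I would observe that if $\Sigma_T(\cdot)$ solves \eqref{SigmaT} on $[0,T]$, then $t\mapsto\Sigma_T(T-t)$ solves \eqref{Sigma(t)} on $[0,T]$; thus it suffices to study the terminal-value family and pass to the limit $T\to\infty$. By Lemma \ref{le-1}(iii), under (H1)–(H2) each $\Sigma_T(\cdot)$ is the unique positive semidefinite solution, with $\Sigma_T(t)>0$ for $t<T$. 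The core monotonicity claim, $0<\Sigma(s)\le\Sigma(t)\le\Sigma$ for $0<s<t$, I would prove at the level of the reversed equation: standard comparison for matrix Riccati equations shows $T\mapsto\Sigma_T(t)$ is monotone (enlarging the horizon only helps), and comparison against the constant solution $\Sigma$ of the ARE \eqref{Sigma} — viewed as a stationary solution of the same Riccati flow — gives the uniform upper bound $\Sigma_T(t)\le\Sigma$. Monotone plus bounded yields a pointwise limit $\Sigma(t):=\lim_{T\to\infty}\Sigma_T(t)$, and the locally uniform bounds together with the Riccati ODE itself (giving equicontinuity of $\dot\Sigma_T$ on compacts) upgrade this to convergence in $C([0,\infty);\mathbb{S}^n)$, so the limit genuinely solves \eqref{Sigma(t)}; uniqueness follows from local Lipschitz dependence of the right-hand side on $\Sigma$ on the region $\{0\le\Sigma\le\Sigma\}$.

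For the exponential estimate \eqref{convergence}, I would set $\Delta(t)=\Sigma-\Sigma(t)\ge0$ and subtract \eqref{Sigma(t)} from the ARE \eqref{Sigma}. Writing the nonlinear terms as differences and collecting, one gets a linear-in-$\Delta$ equation of the form
$$
\dot\Delta(t)=\mathcal{A}_\Sigma(t)^{\top}\Delta(t)+\Delta(t)\mathcal{A}_\Sigma(t)+(\text{quadratic remainder in }\Delta),
$$
where the linearization coefficient converges, as $t\to\infty$, to $A_\Sigma=-(A+\Sigma Q)^{\top}$ with the $\mathcal{C}$-part limiting to $C_\Sigma=-[C(I_n+\Sigma N)^{-1}]^{\top}$ exactly as defined in \eqref{A,C_Sigma}. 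The key point is that Lemma \ref{th-1} guarantees $[A_\Sigma,C_\Sigma]$ is $L^2$-stable, hence mean-square exponentially stable, hence (by Lemma \ref{le-2}(v)) there is $P\in\mathbb{S}_+^n$ with $PA_\Sigma+A_\Sigma^{\top}P+C_\Sigma^{\top}PC_\Sigma<0$; in fact for the present deterministic Lyapunov argument it is the (even simpler) fact that the spectral abscissa of the deterministic drift controlling $\Delta$ is negative that I need. I would then use $\langle P\Delta(t),\Delta(t)\rangle$ — or more directly the matrix inequality obtained by sandwiching — as a Lyapunov functional: along the flow, $\frac{d}{dt}$ of this quantity is bounded above by $-4\sigma\langle P\Delta,\Delta\rangle + C|\Delta|^3$ for some $\sigma>0$, and since $|\Delta(t)|\le|\Sigma|$ is already bounded and, by the monotone convergence $\Sigma(t)\uparrow\Sigma$, tends to $0$, the cubic term is absorbed for $t$ large; Grönwall then yields $|\Delta(t)|\le K_4 e^{-2\sigma t}$ first for $t\ge t_0$ and, after adjusting $K_4$, for all $t\ge0$.

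The main obstacle I anticipate is making the linearization rigorous and uniform: the coefficient $\mathcal{A}_\Sigma(t)$ involves the terms $(I_n+\Sigma(t)N)^{-1}$ versus $(I_n+\Sigma N)^{-1}$, so one must carefully expand $C(I_n+\Sigma N)^{-1}\Sigma C^{\top}-C(I_n+\Sigma(t)N)^{-1}\Sigma(t)C^{\top}$ using the resolvent identity, separating the part that is linear in $\Delta(t)$ (contributing to the stable linear operator) from a genuinely quadratic remainder, and then controlling that remainder uniformly in $t$ using the a priori bound $0<\Sigma(t)\le\Sigma$ — which keeps all the inverses uniformly bounded. A secondary technical point is that the natural Lyapunov inequality from Lemma \ref{le-2}(v) is phrased for the \emph{matrix} $\Delta$ flow, so I would either work with $\mathrm{tr}(P\Delta)$ exploiting $\Delta\ge0$, or bound $\lambda_{\max}(\Delta(t))$ directly; either way the positive semidefiniteness $\Delta(t)\ge0$ coming from the monotonicity step is what makes the scalarization clean. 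Once the exponential decay of $\Delta$ is in hand, the stated bounds $0<\Sigma(s)\le\Sigma(t)\le\Sigma$ and \eqref{convergence} are exactly the two assertions of the theorem.
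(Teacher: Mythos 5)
Your overall architecture coincides with the paper's: time reversal linking \eqref{Sigma(t)} to the terminal-value family \eqref{SigmaT}, monotonicity plus the uniform bound $\Sigma_T(t)\leqslant\Sigma$, and then an equation for $\Pi(t)=\Sigma-\Sigma(t)$ of the form $\dot\Pi=\Pi A_\Sigma+A_\Sigma^{\top}\Pi+C_\Sigma^{\top}\Pi C_\Sigma+f(t,\Pi)$ with $f$ quadratic, closed by the stability of $[A_\Sigma,C_\Sigma]$ from Lemma \ref{th-1}. But there are two concrete gaps. First, the construction ``$\Sigma(t):=\lim_{T\to\infty}\Sigma_T(t)$'' is not the right object: since $\Sigma_T(s)=\Sigma(T-s)$ by the uniqueness you yourself invoke, that limit is the \emph{constant} matrix $\Sigma_\infty$, not the solution of the initial-value problem; existence needs no limit at all, only the observation that $\Sigma_T(T-t)$ is independent of $T\geqslant t$. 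More seriously, you use ``$\Sigma(t)\uparrow\Sigma$'' to absorb the cubic remainder for large $t$, but monotone convergence only yields some limit $\Sigma_\infty\leqslant\Sigma$; the identification $\Sigma_\infty=\Sigma$ is a separate step (the paper integrates \eqref{Sigma(t)} over $[T,T+1]$, lets $T\to\infty$ to see that $\Sigma_\infty$ solves the ARE \eqref{Sigma}, and invokes uniqueness of the positive definite solution). Without it, the smallness of $|\Pi(t)|$ needed to start the Gronwall absorption is not available.

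Second, your parenthetical claim that only ``the spectral abscissa of the deterministic drift'' is needed is wrong and would break the estimate if followed literally. The flow for $\Pi\geqslant0$ contains the term $C_\Sigma^{\top}\Pi C_\Sigma\geqslant0$, which is a \emph{destabilizing} perturbation of the purely deterministic Lyapunov flow; Hurwitz-ness of $A_\Sigma$ alone does not control it. What is genuinely required is the stochastic Lyapunov inequality $\Sigma A_\Sigma+A_\Sigma^{\top}\Sigma+C_\Sigma^{\top}\Sigma C_\Sigma<0$ of Lemma \ref{th-1}, equivalently the mean-square exponential stability of $[A_\Sigma,C_\Sigma]$ via Lemma \ref{le-2}; this is exactly what the paper feeds into the cited decay lemma of Sun--Wang--Yong. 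Your fallback of using $\mathrm{tr}(P\Pi)$ with $\Pi\geqslant0$ is the correct repair, provided $P$ comes from the stochastic (not deterministic) Lyapunov inequality. Finally, the ``standard comparison'' you invoke for monotonicity has to be adapted to the rational term $C(I_n+\Sigma_T N)^{-1}\Sigma_T C^{\top}$: the paper does this explicitly by differentiating \eqref{SigmaT} and rewriting the equation for $\Sigma-\Sigma_T$ in a form to which Tang's comparison theorem applies, a step your sketch leaves unaddressed.
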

\begin{proof}The proof is divided into several steps.\\
$\textbf{Step 1. (Solvability of the differential Riccati equation \eqref{Sigma(t)})}$
First of all, for fixed but arbitrary $0<T_{1}<T_{2}<\infty$, we define
$$
\begin{aligned}
\Sigma^{1}(t)\triangleq \Sigma_{T_{1}}(T_{1}-t),\qquad 0\leqslant t\leqslant T_{1},\\
\Sigma^{2}(t)\triangleq \Sigma_{T_{2}}(T_{2}-t),\qquad 0\leqslant t\leqslant T_{2}.
\end{aligned}
$$
On the interval $[0,T_{1}]$, both $\Sigma^{1}$ and $\Sigma^{2}$ solve the equation 
\begin{equation*}
\left\{\begin{array}{l}
\dot{\Sigma}(t)+\Sigma(t)A^{\top}+A\Sigma(t)+\Sigma(t)Q\Sigma(t)-BR^{-1}B^{\top}-C\left(I_{n}+\Sigma(t)N\right)^{-1}\Sigma(t)C^{\top}=0,\\
\Sigma(0)=0.
\end{array}\right.
\end{equation*}
By the uniqueness, we must have $\Sigma^{1}=\Sigma^{2}$ on $[0,T_{1}]$. Then the function $\Sigma:[0,\infty)\rightarrow\mathbb{S}$ defined by
$$
\Sigma(t)\triangleq \Sigma_{T}(T-t)
$$
is independent of the choice of $T\geqslant t$ and is a solution of the differential Riccati equation \eqref{Sigma(t)}. Moreover, since for any $T\in (0,\infty)$,
$$
\Sigma_{T}(t)>0,\qquad t\in[0,T),
$$
it holds for $\forall~ t\in(0,\infty)$,
$$
\Sigma(t)>0.
$$\\
$\textbf{Step 2. (Long time behavior)}$ Taking derivative in \eqref{SigmaT} yields
\begin{equation*}
\left\{\begin{array}{l}
\ddot{\Sigma}_{T}(t)+\dot{\Sigma}_{T}(t)A_{T}(t)+A_{T}(t)^{\top}\dot{\Sigma}_{T}(t)+C_{T}(t)^{\top}\dot{\Sigma}_{T}(t)C_{T}(t)=0,\quad t\in[0,T],\\
\dot{\Sigma}_{T}(T)=-BR^{-1}B^{\top}<0.
\end{array}\right.
\end{equation*}
where
$$
\begin{aligned}
A_{T}(t)=-\left[A+\Sigma_{T}(t)Q\right]^{\top},\qquad C_{T}(t)=-[C(I_{n}+\Sigma_{T}(t)N)^{-1}]^{\top}.
\end{aligned}
$$
From \cite[Theorem 2.4]{Tang 2003}, we know that the above equation admits a unique solution $\dot{\Sigma}_{T}(t)\leqslant0$ on $[0,T]$. Following the method in Step 1, we can easily obtain $\dot{\Sigma}(t)\geqslant0$ for any $t\in(0,\infty)$, which implies 
$$0<\Sigma(s)\leqslant\Sigma(t)$$
for any $0<s<t<\infty$. Now letting $\Pi_{T}(t)=\Sigma-\Sigma_{T}(t)$, it holds that 
$$
\begin{aligned}
\dot{\Pi}_{T}(t)=&\Pi_{T}(t)A^{\top}+A\Pi_{T}(t)+\Sigma Q\Sigma-\Sigma_{T}(t)Q\Sigma_{T}(t)\\
&-C(I_{n}+\Sigma N)^{-1}\Sigma C^{\top}+C(I_{n}+\Sigma_{T}(t)N)^{-1}\Sigma_{T}(t)C^{\top}\\
=&\Pi_{T}(t)A^{\top}+A\Pi_{T}(t)+\Pi_{T}(t)Q\Pi_{T}(t)+\Pi_{T}(t)Q\Sigma_{T}(t)+\Sigma_{T}(t)Q\Pi_{T}(t)\\
&-C(I_{n}+\Sigma_{T}(t)N)^{-1}\Pi_{T}(t)(I_{n}+N\Sigma)^{-1}C^{\top}\\
=&\Pi_{T}(t)A^{\top}+\Pi_{T}(t)Q\Sigma_{T}(t)+A\Pi_{T}(t)+\Sigma_{T}(t)Q\Pi_{T}(t)+\Pi_{T}(t)Q\Pi_{T}(t)\\
&-C(I_{n}+\Sigma_{T}(t)N)^{-1}\Pi_{T}(t)(I_{n}+N\Sigma_{T}(t))^{-1}C^{\top}\\
&+C(I_{n}+\Sigma_{T}(t)N)^{-1}\Pi_{T}(t)(I_{n}+N\Sigma)^{-1}N\Pi_{T}(t)(I_{n}+N\Sigma_{T}(t))^{-1}C^{\top}\\
=&-\Bigg [\Pi_{T}(t)A_{T}(t)+A_{T}(t)^{\top}\Pi_{T}(t)+C_{T}(t)^{\top}\Pi_{T}(t)C_{T}(t)\\
&\quad-\left(\Pi_{T}(t)B_{T}+C_{T}(t)^{\top}\Pi_{T}(t)D_{T}\right)\left(R_{T}(t)+D_{T}^{\top}\Pi_{T}(t)D_{T}\right)^{-1}\left(B_{T}^{\top}\Pi_{T}(t)+D_{T}^{\top}\Pi_{T}(t)C_{T}(t)\right)\Bigg ]
\end{aligned}
$$
where
$$
\begin{aligned}
R_{T}(t)=\begin{pmatrix}
	Q^{-1} &  \\
	  & N^{-1}+\Sigma_{T}(t)
\end{pmatrix},\qquad B_{T}=\begin{pmatrix}
	I_{n} & 0_{n\times n}  
\end{pmatrix},\qquad D_{T}=\begin{pmatrix}
 0_{n\times n} & I_{n}  
\end{pmatrix}.
\end{aligned}
$$
Combining with $\Pi_{T}(T)=\Sigma>0$, it follows from \cite[Theorem 2.4]{Tang 2003} that for any $t\in[0,T]$, 
$$
\Pi_{T}(t)\geqslant0,\qquad i.e.\qquad \Sigma_{T}(t)\leqslant\Sigma.
$$
Since for any $0<s<t<\infty$, there exists $T\geqslant t$ such that 
$$
0<\Sigma_{T}(T-s)\leqslant\Sigma_{T}(T-t)\leqslant\Sigma,
$$
$$
i.e.\qquad 0<\Sigma(s)\leqslant\Sigma(t)\leqslant\Sigma,
$$
then we can deduce from the monotone convergence theorem that there exists constant matrix $\Sigma_{\infty}>0$ such that
$$
\lim\limits_{t\rightarrow\infty}\Sigma(t)=\Sigma_{\infty}\leqslant\Sigma.
$$
Now we obtain from \eqref{Sigma(t)} that for any $T>0$,
 $$
\Sigma(T+1)-\Sigma(T)=\int_{T}^{T+1}[-\Sigma(t)A^{\top}-A\Sigma(t)-\Sigma(t)Q\Sigma(t)+BR^{-1}B^{\top}+C\left(I_{n}+\Sigma(t)N\right)^{-1}\Sigma(t)C^{\top}]dt.
$$
Let $T\rightarrow\infty$ and we find that $\Sigma_{\infty}$ satisfies ARE \eqref{Sigma}, which implies
$$
\lim\limits_{t\rightarrow\infty}\Sigma(t)=\Sigma_{\infty}=\Sigma.
$$\\
$\textbf{Step 3. (Exponential rate)}$ Let $\Pi(t)=\Sigma-\Sigma(t)$. Then
$$
\begin{aligned}
\dot{\Pi}(t)=&-\Bigg[\Pi(t)A^{\top}+A\Pi(t)+\Sigma Q\Sigma-\Sigma(t)Q\Sigma(t)-C(I_{n}+\Sigma N)^{-1}\Sigma C^{\top}+C(I_{n}+\Sigma(t)N)^{-1}\Sigma(t)C^{\top}\Bigg]\\
=&-\Bigg[\Pi(t)A^{\top}+A\Pi(t)+\Sigma Q\Sigma-\big(\Pi(t)-\Sigma\big)Q\big(\Pi(t)-\Sigma\big)\\
&\quad\qquad\qquad-C\big[(I_{n}+\Sigma N)^{-1}\Sigma-(I_{n}+\Sigma(t)N)^{-1}\Sigma(t)\big]C^{\top}\Bigg]\\
=&-\Bigg[\Pi(t)A^{\top}+A\Pi(t)-\Pi(t)Q\Pi(t)+\Pi(t)Q\Sigma+\Sigma Q\Pi(t)\\
&\quad\qquad\qquad+C(I_{n}+\Sigma N)^{-1}\Pi(t)(I_{n}+N\Sigma(t))^{-1}C^{\top}\Bigg]\\
=&-\Bigg[\Pi(t)A^{\top}+A\Pi(t)-\Pi(t)Q\Pi(t)+\Pi(t)Q\Sigma+\Sigma Q\Pi(t)-C(I_{n}+\Sigma N)^{-1}\Pi(t)(I_{n}+N\Sigma)^{-1}C^{\top}\\
&\quad\qquad\qquad-C(I_{n}+\Sigma N)^{-1}\Pi(t)(I_{n}+N\Sigma(t))^{-1}N\Pi(t)(I_{n}+N\Sigma)^{-1}C^{\top}\Bigg]\\
=&-\Pi(t)(A+\Sigma Q)^{\top}-(A+\Sigma Q)\Pi(t)+C(I_{n}+\Sigma N)^{-1}\Pi(t)(I_{n}+N\Sigma)^{-1}C^{\top}\\
&\quad+\Pi(t)Q\Pi(t)+C(I_{n}+\Sigma N)^{-1}\Pi(t)(I_{n}+N\Sigma(t))^{-1}N\Pi(t)(I_{n}+N\Sigma)^{-1}C^{\top}.
\end{aligned}
$$
We recall the notation of $A_{\Sigma},C_{\Sigma}$ in \eqref{A,C_Sigma} and denote
\begin{equation}\label{f}
f(t,\Pi(t))\triangleq\Pi(t)Q\Pi(t)+C(I_{n}+\Sigma N)^{-1}\Pi(t)(I_{n}+N\Sigma(t))^{-1}N\Pi(t)(I_{n}+N\Sigma)^{-1}C^{\top}.
\end{equation}
Then we can rewrite the equation for $\Pi(\cdot)$ as follows:
$$
\dot{\Pi}(t)=\Pi(t)A_{\Sigma}+A_{\Sigma}^{\top}\Pi(t)+C_{\Sigma}^{\top}\Pi(t)C_{\Sigma}+f(t,\Pi(t)).
$$
From Lemma \ref{th-1} we know that the system $[A_{\Sigma},C_{\Sigma}]$ is $L^{2}$-stable. Thus, we conclude from \cite[Lemma 4.3]{Sun et al. 2022} and Lemma \ref{le-2} that \eqref{convergence} holds for large $t$ and hence all $t\geqslant0$.
\end{proof}

\begin{remark}\label{re-1}
As a direct consequence of Theorem \ref{th-3}, we can see
$$
|\Sigma(t)|\leqslant|\Sigma|+|\Sigma-\Sigma(t)|\leqslant |\Sigma|+K_{4},
$$
which implies $\Sigma(t)$ is uniformly bounded.
\end{remark}

Let $\left(y^{*},z^{*},u^{*}\right)$ be the unique solution of Problem (S), $\lambda^{*} \in \mathbb{R}^{n}$ be the corresponding Lagrange multiplier in Lemma \ref{th-2} and $\left(\bar{Y}_{T}(\cdot),\bar{Z}_{T}(\cdot),\bar{u}_{T}(\cdot)\right)$ be the optimal pair of Problem (BLQ)$_{T}$. Define
\begin{equation}\label{widehat}
\widehat{Y}_{T}(\cdot)=\bar{Y}_{T}(\cdot)-y^{*},\quad\widehat{Z}_{T}(\cdot)=\bar{Z}_{T}(\cdot)-z^{*},\quad\widehat{u}_{T}(\cdot)=\bar{u}_{T}(\cdot)-u^{*},\quad \widehat{X}_{T}(\cdot)=\bar{X}_{T}(\cdot)+\lambda^{*} .
\end{equation}
Then we can directly obtain from Lemma \ref{le-1} and \eqref{*} that
$$
\left\{\begin{array}{l}
d\widehat{Y}_{T}(t)=[A\widehat{Y}_{T}(t)+B\widehat{u}_{T}(t)+C\bar{Z}_{T}(t)-Cz^{*}]dt+\bar{Z}_{T}(t)dW_t,\\
d\widehat{X}_{T}(t)=\left[-A^{\top}\widehat{X}_{T}(t)+Q\widehat{Y}_{T}(t)\right]dt
+\left[-C^{\top}\widehat{X}_{T}(t)+N\bar{Z}_{T}(t)-(N+\Sigma^{-1})z^{*}\right]dW_t,\quad t\in[0,T],\\
\widehat{Y}_{T}(T)=\xi-y^{*},\quad\widehat{X}_{T}(0)=\lambda^{*},\\
-B^{\top}\widehat{X}_{T}(t)+R\widehat{u}_{T}(t)=0, \quad a.e.~t\in[0,T],~a.s.
\end{array}\right.
$$
In light of Lemma \ref{le-1}(ii), we claim that $(\widehat{Y}_{T}(\cdot),\bar{Z}_{T}(\cdot),\widehat{u}_{T}(\cdot))$ is an optimal pair of the stochastic BLQ problem with the state equation
$$
\left\{\begin{array}{l}
dY(t)=[AY(t)+Bu(t)+CZ(t)-Cz^{*}]dt+Z(t)dW_t,\quad t\in[0,T],\\
Y(T)=\xi-y^{*}
\end{array}\right.
$$
and cost functional
\begin{equation}\label{cost_1}
\begin{aligned}
J(\xi;u)=\frac{1}{2}\mathbb{E}\left\{2\left\langle\lambda^{*},Y(0)\right\rangle+\int_{0}^{T}[\right.&\langle QY(t),Y(t)\rangle+\langle NZ(t),Z(t)\rangle+\langle Ru(t),u(t)\rangle\\
& \left.\left.-2\left\langle(N+\Sigma^{-1})z^{*},Z(t)\right\rangle\right]dt\right\}.
\end{aligned}
\end{equation}

We can also obtain the following result from Lemma \ref{le-1}.
\begin{lemma}\label{le-6}
Let (H1)-(H2) be satisfied. Let $\Sigma_{T}(\cdot)$ be the solution to \eqref{SigmaT} and $(\widehat{\varphi}_{T}(\cdot),\widehat{\beta}_T(\cdot))$ be the solution to the following BSDE:
\begin{equation}\label{SDE}
\left\{\begin{array}{l}
\begin{aligned}
d\widehat{\varphi}_{T}(t)=&\left[\left(A+\Sigma_{T}(t)Q\right)\widehat{\varphi}_{T}(t)+C\left(I_{n}+\Sigma_{T}(t)N\right)^{-1}\widehat{\beta}_{T}(t)
\right.\\
&\left.+C\left(I_{n}+\Sigma_{T}(t)N\right)^{-1}(\Sigma-\Sigma_{T}(t))\Sigma^{-1}z^{*}\right]dt+\widehat{\beta}_{T}(t)dW_{t},\quad t\in[0,T],
\end{aligned}\\
\widehat{\varphi}_{T}(T)=y^{*}-\xi.
\end{array}\right.
\end{equation}
Then the optimal pair $(\widehat{Y}_{T}(\cdot),\bar{Z}_{T}(\cdot),\widehat{u}_{T}(\cdot))$ is given by
$$
\left\{\begin{array}{l}
\widehat{Y}_{T}(t)=-\Sigma_{T}(t)\widehat{X}_{T}(t)-\widehat{\varphi}_{T}(t),\\
\bar{Z}_{T}(t)=\left(I_{n}+\Sigma_{T}(t)N\right)^{-1}\left[\Sigma_{T}(t)C^{\top}\widehat{X}_{T}(t)+\Sigma_{T}(t)(N+\Sigma^{-1})z^{*}-\widehat{\beta}_{T}(t)\right],\\
\widehat{u}_{T}(t)=R^{-1}B^{\top}\widehat{X}_{T}(t).
\end{array}\right.
$$
\end{lemma}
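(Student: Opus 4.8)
The plan is to recognise the auxiliary stochastic BLQ problem introduced just before the statement (state equation with terminal $\xi-y^*$ and cost functional \eqref{cost_1}, whose optimality was already established via Lemma \ref{le-1}(ii)) as a particular instance of Problem (BLQ)$_{[0,T]}$, and then simply read off the conclusion from Lemma \ref{le-1}(iii)--(iv), the only genuine work being one short matrix identity. Comparing \eqref{cost_1} and the state equation with \eqref{Y}--\eqref{J}, the relevant (constant) data are
$$
\mathbf{A}=A,\ \ \mathbf{B}=B,\ \ \mathbf{C}=C,\ \ \mathbf{b}=-Cz^*,\ \ \mathbf{G}=0,\ \ \mathbf{g}=\lambda^*,
$$
$$
\mathbf{Q}=Q,\ \ \mathbf{N}=N,\ \ \mathbf{R}=R,\ \ \mathbf{q}=0,\ \ \mathbf{n}=-(N+\Sigma^{-1})z^*,\ \ \mathbf{r}=0,
$$
with terminal state $\xi-y^*$. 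By (H2) we have $\mathbf{R}=R\geqslant\lambda_{\min}(R)I$ with $\lambda_{\min}(R)>0$, while $\mathbf{Q}=Q>0$, $\mathbf{N}=N>0$ (hence both $\geqslant0$) and $\mathbf{G}=0\geqslant0$, so the standing hypotheses of Lemma \ref{le-1} are met.

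Next I would invoke Lemma \ref{le-1}(iii): for the data above, the differential Riccati equation appearing there is exactly \eqref{SigmaT}, so the relevant Riccati solution is $\mathbf{\Sigma}(\cdot)=\Sigma_T(\cdot)$. Substituting the data into the representation formulas of Lemma \ref{le-1}(iv) then gives, with $(\boldsymbol\varphi,\beta)\equiv(\widehat\varphi_T,\widehat\beta_T)$, the identities $\widehat Y_T(t)=-\Sigma_T(t)\widehat X_T(t)-\widehat\varphi_T(t)$ and $\widehat u_T(t)=R^{-1}B^\top\widehat X_T(t)$ (using $\mathbf{r}=0$), while from $-\mathbf{\Sigma}(t)\mathbf{n}(t)=\Sigma_T(t)(N+\Sigma^{-1})z^*$ one gets
$$
\bar Z_T(t)=(I_n+\Sigma_T(t)N)^{-1}\bigl[\Sigma_T(t)C^\top\widehat X_T(t)+\Sigma_T(t)(N+\Sigma^{-1})z^*-\widehat\beta_T(t)\bigr],
$$
which are precisely the claimed formulas. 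It thus remains only to check that the BSDE for $(\boldsymbol\varphi,\beta)$ furnished by Lemma \ref{le-1}(iv) is the BSDE \eqref{SDE}.

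Its terminal value is $-(\xi-y^*)=y^*-\xi$, as in \eqref{SDE}. In its driver, the terms carrying $\boldsymbol\varphi$ and $\beta$ are $(A+\Sigma_T(t)Q)\widehat\varphi_T(t)+C(I_n+\Sigma_T(t)N)^{-1}\widehat\beta_T(t)$, and (using $\mathbf{q}=0$, $\mathbf{r}=0$) the remaining inhomogeneous part is $-\mathbf{b}+C(I_n+\Sigma_T(t)N)^{-1}\Sigma_T(t)\mathbf{n}=Cz^*-C(I_n+\Sigma_T(t)N)^{-1}\Sigma_T(t)(N+\Sigma^{-1})z^*$. Factoring out $(I_n+\Sigma_T(t)N)^{-1}$ and using the one-line identity $(I_n+\Sigma_T(t)N)-\Sigma_T(t)(N+\Sigma^{-1})=I_n-\Sigma_T(t)\Sigma^{-1}=(\Sigma-\Sigma_T(t))\Sigma^{-1}$, this inhomogeneous part equals $C(I_n+\Sigma_T(t)N)^{-1}(\Sigma-\Sigma_T(t))\Sigma^{-1}z^*$, matching \eqref{SDE} term for term. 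Since $\Sigma_T(\cdot)$ is continuous on $[0,T]$ and $y^*-\xi\in L^{\infty}_{\mathcal{F}_{T}}(\Omega,\mathbb{R}^{n})$, \eqref{SDE} is a linear BSDE with bounded coefficients, hence uniquely solvable, so $(\widehat\varphi_T,\widehat\beta_T)=(\boldsymbol\varphi,\beta)$ and the representation follows. I do not expect any real obstacle here: the only points requiring care are keeping track of the several affine terms in Lemma \ref{le-1}(iv) and verifying the identity $(I_n+\Sigma_T(t)N)-\Sigma_T(t)(N+\Sigma^{-1})=(\Sigma-\Sigma_T(t))\Sigma^{-1}$.
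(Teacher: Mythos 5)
Your proposal is correct and follows exactly the route the paper intends: the paper states this lemma as a direct consequence of Lemma \ref{le-1} applied to the auxiliary BLQ problem with state equation having terminal value $\xi-y^{*}$ and cost functional \eqref{cost_1}, and your identification of the data ($\mathbf{b}=-Cz^{*}$, $\mathbf{G}=0$, $\mathbf{g}=\lambda^{*}$, $\mathbf{n}=-(N+\Sigma^{-1})z^{*}$, $\mathbf{q}=0$, $\mathbf{r}=0$) together with the identity $(I_{n}+\Sigma_{T}(t)N)-\Sigma_{T}(t)(N+\Sigma^{-1})=(\Sigma-\Sigma_{T}(t))\Sigma^{-1}$ is precisely the verification that the paper leaves implicit. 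No gaps.
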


Now we will study the property of $(\widehat{\varphi}_{T}(\cdot),\widehat{\beta}_{T}(\cdot))$. 
\begin{lemma}\label{le-7}Let (H1)-(H2) hold. The solution $(\widehat{\varphi}_{T}(\cdot), \widehat{\beta}_{T}(\cdot))$ to BSDE \eqref{SDE} satisfies
\begin{equation}\label{Ktheta}
\left|\widehat{\varphi}_{T}(t)\right|^{2}\leqslant K_{5}e^{-2\theta(T-t)},\quad\mathbb{E}\left[\int_{0}^{T}|\widehat{\beta}_{T}(t)|^{2}dt\right]\leqslant K_{5},\quad\forall t\in[0,T],
\end{equation}
for some constants $K_{5},\theta>0$ independent of $T$.
\end{lemma}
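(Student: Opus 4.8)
The plan is to recognize BSDE \eqref{SDE} as, up to an exponentially small inhomogeneity, the adjoint equation of the system $[A_{\Sigma},C_{\Sigma}]$ --- which is $L^{2}$-stable by Lemma \ref{th-1} --- and to convert this into a contraction estimate through a Lyapunov argument weighted by $\Sigma^{-1}$. First I would rewrite \eqref{SDE} using the $\Sigma_{T}(\cdot)$-analogues of \eqref{A,C_Sigma}, namely $A_{\Sigma_{T}}(t):=-(A+\Sigma_{T}(t)Q)^{\top}$ and $C_{\Sigma_{T}}(t):=-[C(I_{n}+\Sigma_{T}(t)N)^{-1}]^{\top}$, so that the generator of \eqref{SDE} becomes $-A_{\Sigma_{T}}(t)^{\top}\widehat{\varphi}_{T}(t)-C_{\Sigma_{T}}(t)^{\top}\widehat{\beta}_{T}(t)+g_{T}(t)$ with $g_{T}(t)=-C_{\Sigma_{T}}(t)^{\top}(\Sigma-\Sigma_{T}(t))\Sigma^{-1}z^{*}$. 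Since $\Sigma_{T}(t)=\Sigma(T-t)$ and $0<\Sigma_{T}(t)\leqslant\Sigma$ uniformly (Theorem \ref{th-3} and Remark \ref{re-1}), the matrices $A_{\Sigma_{T}}(\cdot)$ and $C_{\Sigma_{T}}(\cdot)$ are bounded independently of $T$, and \eqref{convergence} gives $|g_{T}(t)|\leqslant c_{1}e^{-2\sigma(T-t)}$ together with $|A_{\Sigma_{T}}(t)-A_{\Sigma}|+|C_{\Sigma_{T}}(t)-C_{\Sigma}|\leqslant c_{2}e^{-2\sigma(T-t)}$, with $c_{1},c_{2}$ independent of $T$. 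The proof of Lemma \ref{th-1} shows $\Sigma A_{\Sigma}+A_{\Sigma}^{\top}\Sigma+C_{\Sigma}^{\top}\Sigma C_{\Sigma}\leqslant-\epsilon_{0}I_{n}$ for some $\epsilon_{0}>0$, so by continuity there is a $T_{0}$, independent of $T$, such that
$$
\Xi_{T}(t):=-\big(\Sigma A_{\Sigma_{T}}(t)+A_{\Sigma_{T}}(t)^{\top}\Sigma+C_{\Sigma_{T}}(t)^{\top}\Sigma C_{\Sigma_{T}}(t)\big)\geqslant\tfrac{\epsilon_{0}}{2}I_{n}\qquad\text{whenever }T-t\geqslant T_{0}.
$$

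I would then split $[0,T]$ into the fixed-length window $[T-T_{0},T]$ and the interval $[0,T-T_{0}]$. On $[T-T_{0},T]$, \eqref{SDE} is a linear BSDE whose coefficients, free term and terminal value $y^{*}-\xi$ are bounded independently of $T$; the standard a priori estimate for linear BSDEs over an interval of length $T_{0}$ (via the closed-form representation of the solution) then yields a constant $C_{*}$, independent of $T$, with $|\widehat{\varphi}_{T}(t)|^{2}\leqslant C_{*}$ a.s.\ for all $t\in[T-T_{0},T]$; in particular $|\widehat{\varphi}_{T}(T-T_{0})|^{2}\leqslant C_{*}$ a.s.

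On $[0,T-T_{0}]$ I would apply It\^o's formula to $\psi(t):=\langle\Sigma^{-1}\widehat{\varphi}_{T}(t),\widehat{\varphi}_{T}(t)\rangle$; completing the square in $\widehat{\beta}_{T}$ turns the drift into $\langle\Sigma^{-1}\widehat{\varphi}_{T},\Xi_{T}(t)\Sigma^{-1}\widehat{\varphi}_{T}\rangle+(\text{a nonnegative square})+2\langle\Sigma^{-1}\widehat{\varphi}_{T},g_{T}\rangle$, whence, discarding the square term and using $\Xi_{T}(t)\geqslant\frac{\epsilon_{0}}{2}I_{n}$ together with Young's inequality,
$$
d\psi(t)\geqslant\big(\kappa\,\psi(t)-c_{3}|g_{T}(t)|^{2}\big)dt+2\langle\Sigma^{-1}\widehat{\varphi}_{T}(t),\widehat{\beta}_{T}(t)\rangle dW_{t},\qquad\kappa:=\tfrac{\epsilon_{0}}{4\lambda_{\max}(\Sigma)},\quad t\leqslant T-T_{0}.
$$
Multiplying by $e^{-\kappa t}$, integrating over $[t,T-T_{0}]$ and taking $\mathbb{E}[\,\cdot\mid\mathscr{F}_{t}]$ (the stochastic integral is a genuine martingale, since $\widehat{\varphi}_{T}$ and $\widehat{\beta}_{T}$ enjoy moments of every order for a linear BSDE with bounded data) gives
$$
\psi(t)\leqslant e^{-\kappa(T-T_{0}-t)}\,\mathbb{E}\big[\psi(T-T_{0})\mid\mathscr{F}_{t}\big]+c_{3}\int_{t}^{T-T_{0}}e^{-\kappa(s-t)}|g_{T}(s)|^{2}ds.
$$
Bounding the first term by $\lambda_{\max}(\Sigma^{-1})C_{*}e^{\kappa T_{0}}e^{-\kappa(T-t)}$ via the a priori bound, and the convolution against $|g_{T}(s)|^{2}\leqslant c_{1}^{2}e^{-4\sigma(T-s)}$ by a $T$-independent multiple of $e^{-2\theta(T-t)}$ for any $\theta$ with $2\theta<\min\{\kappa,4\sigma\}$, I obtain $|\widehat{\varphi}_{T}(t)|^{2}\leqslant\lambda_{\max}(\Sigma)\psi(t)\leqslant c_{4}e^{-2\theta(T-t)}$ on $[0,T-T_{0}]$; combined with the a priori bound on $[T-T_{0},T]$ (where $e^{-2\theta(T-t)}\geqslant e^{-2\theta T_{0}}$), this gives the first inequality of \eqref{Ktheta}. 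For the second, apply It\^o's formula to $|\widehat{\varphi}_{T}(t)|^{2}$ over $[0,T]$, take expectations, move $\mathbb{E}\int_{0}^{T}|\widehat{\beta}_{T}|^{2}dt$ to the left-hand side and absorb the cross term $\mathbb{E}\int_{0}^{T}\langle\widehat{\varphi}_{T},C_{\Sigma_{T}}^{\top}\widehat{\beta}_{T}\rangle dt$ by Young's inequality; since $\mathbb{E}|\widehat{\varphi}_{T}(T)|^{2}\leqslant\|y^{*}-\xi\|_{\infty}^{2}$, $\mathbb{E}\int_{0}^{T}|\widehat{\varphi}_{T}(t)|^{2}dt\leqslant c_{4}/(2\theta)$ by the first part, and $\int_{0}^{T}|g_{T}(t)|^{2}dt\leqslant c_{1}^{2}/(4\sigma)$, this yields $\mathbb{E}\int_{0}^{T}|\widehat{\beta}_{T}(t)|^{2}dt\leqslant K_{5}$ uniformly in $T$.

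I expect the Lyapunov step on $[0,T-T_{0}]$ to be the main obstacle --- specifically, extracting the correct sign from It\^o's formula. This is exactly why the weight $\Sigma^{-1}$ is used: only with that weight does the drift produce the combination $\Sigma A_{\Sigma_{T}}+A_{\Sigma_{T}}^{\top}\Sigma+C_{\Sigma_{T}}^{\top}\Sigma C_{\Sigma_{T}}$ that Lemma \ref{th-1} controls. It also forces the split at $T-T_{0}$, because that combination is guaranteed to be negative only once the coefficients have come exponentially close to $A_{\Sigma},C_{\Sigma}$, which is supplied by the decay \eqref{convergence} from Theorem \ref{th-3}. A secondary subtlety is that the asserted bound on $\widehat{\varphi}_{T}$ is pathwise, so the a priori estimate near $t=T$ must be almost sure rather than merely in $L^{2}$; for a linear BSDE with bounded data on a bounded interval this is provided by the explicit representation of the solution.
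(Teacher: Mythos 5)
Your proposal is correct, and it rests on the same two pillars as the paper's proof: the Lyapunov weight $P=\Sigma^{-1}$ applied to $\widehat{\varphi}_{T}$ via It\^o's formula, with the negativity of $\Sigma A_{\Sigma}+A_{\Sigma}^{\top}\Sigma+C_{\Sigma}^{\top}\Sigma C_{\Sigma}$ from Lemma \ref{th-1} supplying the dissipation, followed by a second It\^o computation on $|\widehat{\varphi}_{T}|^{2}$ to extract the $\widehat{\beta}_{T}$ bound. Where you genuinely diverge is in how the time-dependence of $\Sigma_{T}(\cdot)$ is absorbed. The paper freezes the coefficients at their limits $A_{\Sigma},C_{\Sigma}$ and pushes all deviations into a remainder $\rho(t)$; since that remainder then contains a term proportional to $\widehat{\beta}_{T}$, the paper must work with the $\varepsilon$-strengthened inequality $\Sigma A_{\Sigma}+A_{\Sigma}^{\top}\Sigma+(1+\varepsilon)C_{\Sigma}^{\top}\Sigma C_{\Sigma}<0$ and tune a parameter $\gamma$ so that the stray $\widehat{\beta}_{T}$ contributions are dominated by the quadratic-variation term, after which a Gronwall inequality with the integrable coefficient $e^{-2\sigma(T-s)}$ closes the estimate on all of $[0,T]$ at once. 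You instead keep the exact time-dependent coefficients $A_{\Sigma_{T}}(t),C_{\Sigma_{T}}(t)$ in the linear part, which makes the inhomogeneity $g_{T}$ purely deterministic and lets the completion of squares in $\widehat{\beta}_{T}$ go through with no slack parameter; the price is that the Lyapunov inequality is only guaranteed once $T-t\geqslant T_{0}$, so you need the window split and a separate almost-sure a priori bound on $[T-T_{0},T]$ (which your appeal to the closed-form representation of the linear BSDE with bounded data does legitimately provide, and which is needed in almost-sure rather than $L^{2}$ form, as you correctly flag). Your route buys cleaner bookkeeping in the $\widehat{\beta}_{T}$ absorption at the cost of an extra near-terminal estimate; the paper's route avoids the split at the cost of the $(\varepsilon,\gamma)$ tuning. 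Both yield a rate $2\theta$ capped by the coefficient convergence rate from Theorem \ref{th-3} and by the spectral gap of the Lyapunov matrix, so the conclusions are quantitatively comparable.
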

\begin{proof}
Let $\Sigma$ be the solution of ARE \eqref{Sigma}. We recall 
$$
A_{\Sigma}=-(A+\Sigma Q)^{\top},\quad C_{\Sigma}=-[C(I_{n}+\Sigma N)^{-1}]^{\top}
$$
and rewrite \eqref{SDE} as:
\begin{equation}\label{SDE-1}
\left\{\begin{array}{l}
\begin{aligned}
d\widehat{\varphi}_{T}(t)=&-\left[A_{\Sigma}^{\top}\widehat{\varphi}_{T}(t)+C_{\Sigma}^{\top}\widehat{\beta}_{T}(t)+\rho(t)\right]dt+\widehat{\beta}_{T}(t)dW_{t},\quad t\in[0,T],
\end{aligned}\\
\widehat{\varphi}_{T}(T)=y^{*}-\xi.
\end{array}\right.
\end{equation}
where
$$
\begin{aligned}
\rho(t)=&\left(\Sigma-\Sigma_{T}(t)\right)Q\widehat{\varphi}_{T}(t)+C\left(I_{n}+\Sigma_{T}(t)N\right)^{-1}(\Sigma_{T}(t)-\Sigma)N\left(I_{n}+\Sigma N\right)^{-1}\widehat{\beta}_{T}(t)\\
&\qquad\quad+C\left(I_{n}+\Sigma_{T}(t)N\right)^{-1}(\Sigma_{T}(t)-\Sigma)\Sigma^{-1}z^{*}.
\end{aligned}
$$
Since 
$$
\Sigma A_{\Sigma}+A_{\Sigma}^{\top}\Sigma+C_{\Sigma}^{\top}\Sigma C_{\Sigma}<0,
$$
one can choose $\varepsilon>0$, such that
$$
\Sigma A_{\Sigma}+A_{\Sigma}^{\top}\Sigma+(1+\varepsilon)C_{\Sigma}^{\top}\Sigma C_{\Sigma}<0.
$$
Let $P$ be the solution of ARE \eqref{P}. We denote the positive definite matrix
$$
\Lambda=-[\Sigma A_{\Sigma}+A_{\Sigma}^{\top}\Sigma+(1+\varepsilon)C_{\Sigma}^{\top}\Sigma C_{\Sigma}]
$$
and have
\begin{equation}\label{PLP}
P\Lambda P=-[A_{\Sigma}P+PA_{\Sigma}^{\top}+(1+\varepsilon)PC_{\Sigma}^{\top}\Sigma C_{\Sigma}P].
\end{equation}
For $\theta>0$, applying It\^o's formula to $s\mapsto e^{2\theta(T-s)}\langle P\widehat{\varphi}_{T}(s),\widehat{\varphi}_{T}(s)\rangle$ yields
\begin{equation}\label{Ito}
\begin{aligned}
&e^{2\theta(T-t)}\langle P\widehat{\varphi}_{T}(t),\widehat{\varphi}_{T}(t)\rangle-\langle P(y^{*}-\xi),y^{*}-\xi\rangle\\
&=\int_{t}^{T}e^{2\theta(T-s)}\left[2\langle P\widehat{\varphi}_{T}(s),A_{\Sigma}^{\top}\widehat{\varphi}_{T}(s)+C_{\Sigma}^{\top}\widehat{\beta}_{T}(s)+\rho(s)\rangle-\langle P\widehat{\beta}_{T}(s),\widehat{\beta}_{T}(s)\rangle+2\theta\langle P\widehat{\varphi}_{T}(s),\widehat{\varphi}_{T}(s)\rangle\right]ds\\
&\qquad\quad-2\int_{t}^{T}e^{2\theta(T-s)}\langle P\widehat{\varphi}_{T}(s),\widehat{\beta}_{T}(s)\rangle dW_{s}.
\end{aligned}
\end{equation}
From \eqref{PLP}, we have
\begin{equation}\label{ineq-1}
\begin{aligned}
&2\langle P\widehat{\varphi}_{T}(s),A_{\Sigma}^{\top}\widehat{\varphi}_{T}(s)+C_{\Sigma}^{\top}\widehat{\beta}_{T}(s)\rangle\\
&=\langle(A_{\Sigma}P+PA_{\Sigma}^{\top})\widehat{\varphi}_{T}(s),\widehat{\varphi}_{T}(s)\rangle+2\langle C_{\Sigma}P\widehat{\varphi}_{T}(s),\widehat{\beta}_{T}(s)\rangle\\
&=-\langle P\Lambda P\widehat{\varphi}_{T}(s),\widehat{\varphi}_{T}(s)\rangle-(1+\varepsilon)\langle PC_{\Sigma}^{\top}\Sigma C_{\Sigma}P\widehat{\varphi}_{T}(s),\widehat{\varphi}_{T}(s)\rangle+2\langle C_{\Sigma}P\widehat{\varphi}_{T}(s),\widehat{\beta}_{T}(s)\rangle\\
&=-\langle P\Lambda P\widehat{\varphi}_{T}(s),\widehat{\varphi}_{T}(s)\rangle-(1+\varepsilon)\left\langle\Sigma\left(C_{\Sigma}P\widehat{\varphi}_{T}(s)-\frac{1}{1+\varepsilon}P\widehat{\beta}_{T}(s)\right),C_{\Sigma}P\widehat{\varphi}_{T}(s)-\frac{1}{1+\varepsilon}P\widehat{\beta}_{T}(s)\right\rangle\\
&\qquad\quad+\frac{1}{1+\varepsilon}\langle P\widehat{\beta}_{T}(s),\widehat{\beta}_{T}(s)\rangle\\
&\leqslant-\langle P\Lambda P\widehat{\varphi}_{T}(s),\widehat{\varphi}_{T}(s)\rangle+\frac{1}{1+\varepsilon}\langle P\widehat{\beta}_{T}(s),\widehat{\beta}_{T}(s)\rangle.
\end{aligned}
\end{equation}
On the other hand, we have
\begin{equation}\label{ineq-2}
\begin{aligned}
2\langle P\widehat{\varphi}_{T}(s),\rho(s)\rangle=&2\langle P\widehat{\varphi}_{T}(s),\left(\Sigma-\Sigma_{T}(s)\right)Q\widehat{\varphi}_{T}(s)\rangle\\
&\qquad\quad+2\langle P\widehat{\varphi}_{T}(s),C\left(I_{n}+\Sigma_{T}(s)N\right)^{-1}(\Sigma_{T}(s)-\Sigma)N\left(I_{n}+\Sigma N\right)^{-1}\widehat{\beta}_{T}(s)\rangle\\
&\qquad\quad+2\langle P\widehat{\varphi}_{T}(s),C\left(I_{n}+\Sigma_{T}(s)N\right)^{-1}(\Sigma_{T}(s)-\Sigma)\Sigma^{-1}z^{*}\rangle\\
\leqslant&2\langle Q\left(\Sigma-\Sigma_{T}(s)\right)P\widehat{\varphi}_{T}(s),\widehat{\varphi}_{T}(s)\rangle+\frac{1}{\gamma}|\left(\Sigma_{T}(s)-\Sigma\right)\left(I_{n}+N\Sigma_{T}(s)\right)^{-1}C^{\top}P\widehat{\varphi}_{T}(s)|^{2}\\
&\qquad\quad+\gamma |N\left(I_{n}+\Sigma N\right)^{-1}\widehat{\beta}_{T}(s)|^2+\frac{1}{2}\langle P\Lambda P\widehat{\varphi}_{T}(s),\widehat{\varphi}_{T}(s)\rangle\\
&\qquad\quad+2|\Lambda^{-\frac{1}{2}}C\left(I_{n}+\Sigma_{T}(s)N\right)^{-1}(\Sigma_{T}(s)-\Sigma)\Sigma^{-1}z^{*}|^{2}\\
\leqslant&k_{51}|\Sigma_{T}(s)-\Sigma||\widehat{\varphi}_{T}(s)|^{2}+\frac{k_{51}}{\gamma}|\Sigma_{T}(s)-\Sigma|^{2}|\widehat{\varphi}_{T}(s)|^{2}+\gamma |N\left(I_{n}+\Sigma N\right)^{-1}\widehat{\beta}_{T}(s)|^2\\
&\qquad\quad+\frac{1}{2}\langle P\Lambda P\widehat{\varphi}_{T}(s),\widehat{\varphi}_{T}(s)\rangle+k_{51}|\Sigma_{T}(s)-\Sigma|^{2}
\end{aligned}
\end{equation}
where $k_{51}>0$ is a constant independent of $T$ and $\gamma>0$ is a constant to be determined later. With $\sigma$ given in Theorem \ref{convergence}, we choose $\theta=\frac{\lambda_{\min}(P\Lambda P)}{4\lambda_{\max}(P)}\bigwedge\sigma$ and some constant $\gamma>0$ such that
$$
\gamma |N\left(I_{n}+\Sigma N\right)^{-1}\widehat{\beta}_{T}(s)|^2\leqslant\frac{\varepsilon}{1+\varepsilon}\langle P\widehat{\beta}_{T}(s),\widehat{\beta}_{T}(s)\rangle.
$$
Substituting \eqref{ineq-1} and \eqref{ineq-2} into \eqref{Ito} gives
$$
\begin{aligned}
&e^{2\theta(T-t)}\langle P\widehat{\varphi}_{T}(t),\widehat{\varphi}_{T}(t)\rangle-\langle P(y^{*}-\xi),y^{*}-\xi\rangle\\
&\leqslant\int_{t}^{T}e^{2\theta(T-s)}\left[2\theta\langle P\widehat{\varphi}_{T}(s),\widehat{\varphi}_{T}(s)\rangle-\frac{1}{2}\langle P\Lambda P\widehat{\varphi}_{T}(s),\widehat{\varphi}_{T}(s)\rangle+\frac{k_{51}}{\gamma}|\Sigma_{T}(s)-\Sigma|^{2}|\widehat{\varphi}_{T}(s)|^{2}\right.\\
&\quad+\left.k_{51}|\Sigma_{T}(s)-\Sigma||\widehat{\varphi}_{T}(s)|^{2}+k_{51}|\Sigma_{T}(s)-\Sigma|^{2}\right]ds-2\int_{t}^{T}e^{2\theta(T-s)}\langle P\widehat{\varphi}_{T}(s),\widehat{\beta}_{T}(s)\rangle dW_{s}.
\end{aligned}
$$
For $0\leqslant r\leqslant t\leqslant T$, by taking the conditional expectation and recalling Theorem \ref{th-3}, we can derive the following estimate:
$$
\begin{aligned}
&\lambda_{\min}(P)\cdot\mathbb{E}_{r}[e^{2\theta(T-t)}|\widehat{\varphi}_{T}(t)|^{2}]\\
&\leqslant \lambda_{\max}(P)\cdot\mathbb{E}_{r}[|y^{*}-\xi|^{2}]+k_{51}K_{4}^{2}\int_{t}^{T}e^{(2\theta-4\sigma)(T-s)}ds\\
&\qquad+k_{51}K_{4}\mathbb{E}_{r}[\int_{t}^{T}(\frac{K_{4}}{\gamma}e^{(2\theta-4\sigma)(T-s)}+e^{(2\theta-2\sigma)(T-s)})|\widehat{\varphi}_{T}(s)|^{2}ds].
\end{aligned}
$$
Therefore, there exists a positive constant $k_{52}$ such that
$$
\mathbb{E}_{r}[e^{2\theta(T-t)}|\widehat{\varphi}_{T}(t)|^{2}]\leqslant k_{52}+k_{52}\int_{t}^{T}e^{-2\sigma(T-s)}\mathbb{E}_{r}[e^{2\theta(T-s)}|\widehat{\varphi}_{T}(s)|^{2}]ds.
$$
By Gronwall's inequality, we find
$$
\mathbb{E}_{r}[e^{2\theta(T-t)}|\widehat{\varphi}_{T}(t)|^{2}]\leqslant k_{52}+\int_{t}^{T}k_{52}\cdot k_{52}e^{-2\sigma(T-s)}\cdot e^{\int_{s}^{T}k_{52}e^{-2\sigma(T-u)}du}ds\leqslant k_{52}+\frac{k_{52}^2}{2\sigma}e^{\frac{k_{52}}{2\sigma}}:=k_{53}
$$
Therefore, we let $r=t$ and finally get
$$
|\widehat{\varphi}_{T}(t)|^{2}\leqslant k_{53}e^{-2\theta(T-t)},
$$
where $k_{53}$ is independent of $T$. Also from \eqref{SDE}, we use It\^o's formula and have
\begin{equation*}
\begin{aligned}
&\left|\widehat{\varphi}_{T}(t)\right|^{2}+\int_{t}^{T}|\widehat{\beta}_{T}(s)|^{2}ds\\
&=\left|\widehat{\varphi}_{T}(T)\right|^{2}+\int_{t}^{T}2\langle \widehat{\varphi}_{T}(s),\left(A+\Sigma Q\right)^{\top}\widehat{\varphi}_{T}(s)\rangle ds-\int_{t}^{T}2\langle \widehat{\varphi}_{T}(s),\left(\Sigma_{T}(s)-\Sigma\right)Q\widehat{\varphi}_{T}(s)\rangle ds\\
&\quad-\int_{t}^{T}2\langle \widehat{\varphi}_{T}(s),C\left(I_{n}+\Sigma_{T}(s)N\right)^{-1}\widehat{\beta}_{T}(s)\rangle ds\\
&\quad-\int_{t}^{T}2\langle \widehat{\varphi}_{T}(s),C\left(I_{n}+\Sigma_{T}(s)N\right)^{-1}\left(\Sigma-\Sigma_{T}(s)\right)\Sigma^{-1}z^{*}\rangle ds-\int_{t}^{T}2\langle\widehat{\varphi}_{T}(s),\widehat{\beta}_{T}(s)\rangle dW_{s}.
\end{aligned}
\end{equation*}
Letting $t=0$ and taking expectation, there exists a positive constant $k_{54}$ independent of $T$ such that
\begin{equation*}
\begin{aligned}
&\mathbb{E}\left[\int_{0}^{T}|\widehat{\beta}_{T}(s)|^{2}ds\right]\\
&\leqslant\mathbb{E}\left[\left|\widehat{\varphi}_{T}(T)\right|^{2}\right]+k_{54}\mathbb{E}\left[\int_{0}^{T}|\widehat{\varphi}_{T}(s)|^{2}ds\right]+k_{54}\mathbb{E}\left[\int_{0}^{T}|\Sigma_{T}(s)-\Sigma||\widehat{\varphi}_{T}(s)|^{2}ds\right]\\
&\quad+\frac{1}{2}\mathbb{E}\left[\int_{0}^{T}|\widehat{\beta}_{T}(s)|^{2}ds\right]+k_{54}\mathbb{E}\left[\int_{0}^{T}|\Sigma_{T}(s)-\Sigma|^{2}ds\right]\\
&\leqslant\frac{1}{2}\mathbb{E}\left[\int_{0}^{T}|\widehat{\beta}_{T}(s)|^{2}ds\right]+k_{53}+\frac{k_{53}k_{54}K_{4}}{2\theta}+\frac{k_{53}k_{54}}{2\theta+2\sigma}+\frac{k_{54}K_{4}^{2}}{4\sigma}.
\end{aligned}
\end{equation*}
Finally, we let $K_{5}=2k_{53}+\frac{k_{53}k_{54}K_{4}}{\theta}+\frac{k_{53}k_{54}}{\theta+\sigma}+\frac{k_{54}K_{4}^{2}}{2\sigma}$ and finish our proof.
\end{proof}
\begin{remark}\label{re-2}
Compared with \cite{Sun et al. 2022}, although the coefficients in this paper are also deterministic, our adjoint equation \eqref{SDE} is indeed BSDE instead of ODE. This is critical due to the random terminal value in BLQ problem and brings additional difficulty.
\end{remark}

\subsection{The weak turnpike property}
We are now ready to prove our weak turnpike property for Problem (BLQ)$_{T}$.

\begin{proof}[Proof of Theorem \ref{th-4}]
Thanks to Lemma \ref{le-1}, we rewrite the state equation for $\widehat{X}_{T}(\cdot)$ as
$$
\left\{\begin{array}{l}
\begin{aligned}
d\widehat{X}_{T}(t)=&\left[-\left(A+\Sigma_{T}(t) Q\right)^{\top}\widehat{X}_{T}(t)-Q\widehat{\varphi}_{T}(t)\right]dt+\left[-\left(C(I_{n}+\Sigma_{T}(t)N)^{-1}\right)^{\top}\widehat{X}_{T}(t)\right.\\
&\left.-(I_{n}+\Sigma_{T}(t)N)^{-1}(N+\Sigma^{-1})z^{*}-N(I_{n}+\Sigma_{T}(t)N)^{-1}\widehat{\beta}_{T}(t)\right]dW_t,\quad t\in[0,T],
\end{aligned}\\
\widehat{X}_{T}(0)=\lambda^{*},
\end{array}\right.
$$
where $(\widehat{\varphi}_{T}(\cdot),\widehat{\beta}_{T}(\cdot))$ is the unique solution of \eqref{SDE}. Taking expectation, we get
$$
\left\{\begin{array}{l}
d\mathbb{E}[\widehat{X}_{T}(t)]=\left\{A_{\Sigma}\mathbb{E}[\widehat{X}_{T}(t)]+Q\left(\Sigma-\Sigma_{T}(t)\right)\mathbb{E}[\widehat{X}_{T}(t)]-Q\mathbb{E}[\widehat{\varphi}_{T}(t)]\right\}dt\\
\mathbb{E}[\widehat{X}_{T}(0)]=\lambda^{*}.
\end{array}\right.
$$
Noting that $\Sigma A^{\top}+A\Sigma+\Sigma Q\Sigma-BR^{-1}B^{\top}-C(I_{n}+\Sigma N)^{-1}\Sigma C^{\top}=0$,
we obtain
\begin{equation}\label{SigmaXX}
\begin{aligned}
&\langle\Sigma\mathbb{E}[\widehat{X}_{T}(t)],\mathbb{E}[\widehat{X}_{T}(t)]\rangle-\langle\Sigma\lambda^{*},\lambda^{*}\rangle\\
&=\int_{0}^{t}\left[-\langle\left(BR^{-1}B^{\top}+\Sigma Q\Sigma+C(I_{n}+\Sigma N)^{-1}\Sigma C^{\top}\right)\mathbb{E}[\widehat{X}_{T}(s)],\mathbb{E}[\widehat{X}_{T}(s)]\rangle\right.\\
&\qquad\quad\left.+2\langle\Sigma Q(\Sigma-\Sigma_{T}(t))\mathbb{E}[\widehat{X}_{T}(s)],\mathbb{E}[\widehat{X}_{T}(s)]\rangle-2\langle\Sigma Q\mathbb{E}[\widehat{\varphi}_{T}(s)],\mathbb{E}[\widehat{X}_{T}(s)]\rangle\right]ds.
\end{aligned}
\end{equation}
Since $\Sigma>0$ and $BR^{-1}B^{\top}+\Sigma Q\Sigma+C(I_{n}+\Sigma N)^{-1}\Sigma C^{\top}>0$, it follows from \eqref{SigmaXX} that
$$
\begin{aligned}
|\mathbb{E}[\widehat{X}_{T}(t)]|^{2}\leqslant& k_{11}+\int_{0}^{t}\left[-2k_{12}|\mathbb{E}[\widehat{X}_{T}(s)]|^{2}+k_{11}|\Sigma_{T}(s)-\Sigma||\mathbb{E}[\widehat{X}_{T}(s)]|^{2}\right.\\
&\qquad\quad\left.+k_{11}|\mathbb{E}[\widehat{X}_{T}(s)]|\cdot|\mathbb{E}[\widehat{\varphi}_{T}(s)]|\right]ds,
\end{aligned}
$$
for some $k_{11},k_{12}>0$ independent of $T$. Recalling
$$
\left|\Sigma_{T}(t)-\Sigma\right|\leqslant K_{4}e^{-2\sigma(T-t)},\quad\left|\widehat{\varphi}_{T}(t)\right|^{2}\leqslant K_{5}e^{-2\theta(T-t)},\quad\forall t\in[0,T],
$$
we obtain that
$$
\begin{aligned}
|\mathbb{E}[\widehat{X}_{T}(t)]|^{2}\leqslant&k_{11}+\int_{0}^{t}\left[-k_{12}|\mathbb{E}[\widehat{X}_{T}(s)]|^{2}+k_{11}|\Sigma_{T}(s)-\Sigma||\mathbb{E}[\widehat{X}_{T}(s)]|^{2}+\frac{k_{11}^{2}}{4k_{12}}|\mathbb{E}[\widehat{\varphi}_{T}(s)]|^{2}\right]ds\\
\leqslant&k_{11}+\int_{0}^{t}\left[\left(k_{11}K_{4}e^{-2\sigma(T-s)}-k_{12}\right)|\mathbb{E}[\widehat{X}_{T}(s)]|^{2}+\frac{k_{11}^{2}K_{5}}{4k_{12}}e^{-2\theta(T-s)}\right]ds\\
\leqslant&k_{13}+\int_{0}^{t}\left[\left(k_{13}e^{-2\sigma(T-s)}-k_{12}\right)|\mathbb{E}[\widehat{X}_{T}(s)]|^{2}+k_{13}e^{-2\theta(T-s)}\right]ds\\
\end{aligned}
$$
where $k_{13}=k_{11}\bigvee (k_{11}K_{4})\bigvee\frac{k_{11}^{2}K_{5}}{4k_{12}}$. For any $0\leqslant s\leqslant t\leqslant T$, we denote
$$
\begin{aligned}
\phi(s,t)\triangleq&\exp\left\{\int_{s}^{t}(k_{13}e^{-2\sigma(T-r)}-k_{12})dr\right\}\\
=&\exp\left\{\frac{k_{13}}{2\sigma}[e^{-2\sigma(T-t)}-e^{-2\sigma(T-s)}]-k_{12}(t-s)\right\}\\
\leqslant&e^{\frac{k_{13}}{2\sigma}-k_{12}(t-s)}
\end{aligned}
$$
and have
$$
\int_{0}^{t}\phi(s,t)e^{-2\theta(T-s)}ds\leqslant e^{\frac{k_{13}}{2\sigma}}\int_{0}^{t}e^{-2\theta(T-s)}ds\leqslant\frac{1}{2\theta}e^{\frac{k_{13}}{2\sigma}}e^{-2\theta(T-t)}.
$$
Hence, by Gronwall's inequality,
$$
\begin{aligned}
|\mathbb{E}[\widehat{X}_{T}(t)]|^{2}&\leqslant k_{13}\phi(0,t)+k_{13}\int_{0}^{t}\phi(s,t)e^{-2\theta(T-s)}ds\\
&\leqslant k_{13}e^{\frac{k_{13}}{2\sigma}}e^{-k_{12}t}+\frac{k_{13}}{2\theta}e^{\frac{k_{13}}{2\sigma}}e^{-2\theta(T-t)}\\
&\leqslant k_{14}(e^{-2\mu t}+e^{-2\mu(T-t)}),\quad t\in[0,T],
\end{aligned}
$$
where $k_{14}=(1\bigvee\frac{1}{2\theta})k_{13}e^{\frac{k_{13}}{2\sigma}}$ and $\mu=\frac{k_{12}}{2}\bigwedge\theta$. By means of the relationship:
$$
\widehat{u}_{T}(t)=R^{-1}B^{\top}\widehat{X}_{T}(t),\quad\widehat{Y}_{T}(t)=-\Sigma_{T}(t)\widehat{X}_{T}(t)-\widehat{\varphi}_{T}(t),\quad t\in[0,T],
$$
we can easily verify that
$$
|\mathbb{E}[\widehat{u}_{T}(t)]|^{2}\leqslant k_{15}(e^{-2\mu t}+e^{-2\mu(T-t)}),\quad|\mathbb{E}[\widehat{Y}_{T}(t)]|^{2}\leqslant k_{16}(e^{-2\mu t}+e^{-2\mu(T-t)}),\quad t\in[0,T],
$$
for some $k_{15},k_{16}>0$ independent of $T$, which completes the proof.
\end{proof}

As a byproduct of Theorem \ref{th-4}, we have the following corollary.
\begin{corollary}\label{co-2}
Let (H1)-(H2) hold. There exists a positive constant $K_{6}$ independent of $T$  such that
$$
\begin{array}{l}
\int_{0}^{T}\left|\mathbb{E}[\bar{Y}_{T}(t)]-y^{*}\right|dt\leqslant K_{6},\qquad \int_{0}^{T}\left|\mathbb{E}[\bar{u}_{T}(t)]-u^{*}\right|dt\leqslant K_{6},\qquad
\int_{0}^{T}\left|\mathbb{E}[\bar{Z}_{T}(t)]-z^{*}\right|dt\leqslant K_{6}(1+T^{\frac{1}{2}}).
\end{array}
$$
\end{corollary}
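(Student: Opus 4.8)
The first two bounds will follow at once from the pointwise weak turnpike estimate. Since by \eqref{widehat} we have $\mathbb{E}[\bar Y_T(t)]-y^*=\mathbb{E}[\widehat Y_T(t)]$ and $\mathbb{E}[\bar u_T(t)]-u^*=\mathbb{E}[\widehat u_T(t)]$, Theorem \ref{th-4} gives $|\mathbb{E}[\bar Y_T(t)]-y^*|+|\mathbb{E}[\bar u_T(t)]-u^*|\le K_1[e^{-\mu t}+e^{-\mu(T-t)}]$. The plan is then simply to integrate over $[0,T]$ and use that
$$
\int_0^T\big(e^{-\mu t}+e^{-\mu(T-t)}\big)\,dt=\frac{2}{\mu}\big(1-e^{-\mu T}\big)\le\frac{2}{\mu}
$$
is bounded uniformly in $T$; this yields the first two inequalities.

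For the $\bar Z_T$-estimate Theorem \ref{th-4} is not enough, since it gives no pointwise control of $\mathbb{E}[\bar Z_T(t)]$. I would instead start from the representation of $\bar Z_T(\cdot)$ in Lemma \ref{le-6}, take expectations, and decompose
$$
\begin{aligned}
\mathbb{E}[\bar Z_T(t)]-z^*={}&\big(I_n+\Sigma_T(t)N\big)^{-1}\Sigma_T(t)C^{\top}\mathbb{E}[\widehat X_T(t)]\\
&+\Big[\big(I_n+\Sigma_T(t)N\big)^{-1}\Sigma_T(t)(N+\Sigma^{-1})-I_n\Big]z^*-\big(I_n+\Sigma_T(t)N\big)^{-1}\mathbb{E}[\widehat\beta_T(t)].
\end{aligned}
$$
The first term is handled by combining the uniform boundedness of $\Sigma_T(\cdot)$ (Remark \ref{re-1}), hence of $(I_n+\Sigma_T(t)N)^{-1}\Sigma_T(t)C^{\top}$, with $|\mathbb{E}[\widehat X_T(t)]|\le K_1[e^{-\mu t}+e^{-\mu(T-t)}]$ from Theorem \ref{th-4}; its time-integral is then bounded by a $T$-independent constant. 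For the second term I would use the algebraic identity
$$
\big(I_n+\Sigma_T(t)N\big)^{-1}\Sigma_T(t)(N+\Sigma^{-1})-I_n=\big(I_n+\Sigma_T(t)N\big)^{-1}\big(\Sigma_T(t)-\Sigma\big)\Sigma^{-1},
$$
which together with $|\Sigma_T(t)-\Sigma|\le K_4e^{-2\sigma(T-t)}$ from Theorem \ref{th-3} again gives an integrable, $T$-uniform bound.

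The remaining term involving $\mathbb{E}[\widehat\beta_T(t)]$ is where the $T^{1/2}$ enters, and it is the main obstacle: unlike $\widehat\varphi_T$, the correction process $\widehat\beta_T$ satisfies only the integrated bound $\mathbb{E}\int_0^T|\widehat\beta_T(t)|^2\,dt\le K_5$ of Lemma \ref{le-7}, with no pointwise decay in $t$. I would bound $|\mathbb{E}[\widehat\beta_T(t)]|\le(\mathbb{E}|\widehat\beta_T(t)|^2)^{1/2}$ by Jensen and then apply Cauchy--Schwarz in time,
$$
\int_0^T\big|\mathbb{E}[\widehat\beta_T(t)]\big|\,dt\le\Big(T\int_0^T\mathbb{E}|\widehat\beta_T(t)|^2\,dt\Big)^{1/2}\le K_5^{1/2}\,T^{1/2},
$$
so that, using also the uniform bound on $(I_n+\Sigma_T(t)N)^{-1}$, the three pieces combine to $\int_0^T|\mathbb{E}[\bar Z_T(t)]-z^*|\,dt\le K_6(1+T^{1/2})$ for a suitable $K_6$. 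The hard point is precisely this last step: there is no exponential-in-time decay available for $\widehat\beta_T$, so the $T^{1/2}$ growth appears intrinsic to this method, which is why the $\bar Z_T$-bound is strictly weaker than those for $\bar Y_T$ and $\bar u_T$.
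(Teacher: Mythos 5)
Your proposal is correct and follows essentially the same route as the paper: the first two bounds by integrating Theorem \ref{th-4}, and the $\bar Z_T$-bound via the representation in Lemma \ref{le-6}, the identity $(I_n+\Sigma_T(t)N)^{-1}\Sigma_T(t)(N+\Sigma^{-1})-I_n=(I_n+\Sigma_T(t)N)^{-1}(\Sigma_T(t)-\Sigma)\Sigma^{-1}$, and Cauchy--Schwarz in time on the $\widehat\beta_T$ term, which is exactly where the paper also picks up the $T^{1/2}$ factor.
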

\begin{proof}
The first two inequalities follow immediately from Theorem \ref{th-4} (one of the bounds 
 is $\frac{K_{1}}{2\mu}$). For the third one, recalling Lemma \ref{le-6}, we have
$$
\begin{aligned}
\int_{0}^{T}\left|\mathbb{E}[\bar{Z}_{T}(t)]-z^{*}\right|dt\leqslant&\int_{0}^{T}\left|\left(I+\Sigma_{T}(t)N\right)^{-1}\Sigma_{T}(t)C^{\top}E[\widehat{X}_{T}(t)]\right|dt\\
&\qquad\quad+\int_{0}^{T}\left|\left(I+\Sigma_{T}(t)N\right)^{-1}\left(\Sigma_{T}(t)-\Sigma\right)\Sigma^{-1}z^{*}\right|dt\\
&\qquad\quad+\int_{0}^{T}\left|\mathbb{E}\left[\left(I+\Sigma_{T}(t)N\right)^{-1}\widehat{\beta}_{T}(t)\right]\right|dt\\
\leqslant&k_{6}K_{1}^{2}\left|\int_{0}^{T}e^{-\mu t}+e^{-\mu(T-t)}dt\right|^{2}+k_{6}K_{4}^{2}\left|\int_{0}^{T}e^{-2\sigma(T-t)}dt\right|^{2}\\
&\qquad\quad+k_{6}T^{\frac{1}{2}}\mathbb{E}\left[\int_{0}^{T}|\widehat{\beta}_{T}(t)|^{2}dt\right]^{\frac{1}{2}}\\
\leqslant&\frac{4k_{6}K_{1}^{2}}{\mu^{2}}+\frac{k_{6}K_{4}^{2}}{4\sigma^{2}}+k_{6}K_{5}^{\frac{1}{2}}T^{\frac{1}{2}},
\end{aligned}
$$
where $k_{6}$ is a positive constant independent of $T$. We let $K_{6}=\frac{K_{1}}{2\mu}\bigvee(\frac{4k_{6}K_{1}^{2}}{\mu^{2}}+\frac{k_{6}K_{4}^{2}}{4\sigma^{2}})\bigvee k_{6}K_{5}^{\frac{1}{2}}$ and finish our proof.
\end{proof}

Now we are ready to prove Theorem \ref{th-5}.

\begin{proof}[Proof of \ref{th-5}]
Recalling the cost functional \eqref{cost_1} and Lemma \ref{le-1}, we obtain that 
\begin{equation*}
\begin{aligned}
J(\xi;\widehat{u})=&\frac{1}{2}\mathbb{E}\Bigg\{2\langle\lambda^{*},\widehat{Y}_{T}(0)\rangle+\int_{0}^{T}\bigg[\langle Q\widehat{Y}_{T}(t),\widehat{Y}_{T}(t)\rangle+\langle N\bar{Z}_{T}(t),\bar{Z}_{T}(t)\rangle+\langle R\widehat{u}_{T}(t),\widehat{u}_{T}(t)\rangle\\
&\qquad-2\langle(N+\Sigma^{-1})z^{*},\bar{Z}_{T}(t)\rangle\bigg]dt\Bigg\}\\
=&\frac{1}{2}\mathbb{E}\Bigg\{-2\langle\lambda^{*},\widehat{\varphi}_{T}(0)\rangle-\langle\Sigma_{T}(0)\lambda^{*},\lambda^{*}\rangle+\int_{0}^{T}\bigg[\langle Q\widehat{\varphi}_{T}(t),\widehat{\varphi}_{T}(t)\rangle\\
&\qquad
-\langle\left(I_{n}+\Sigma_{T}(t)N\right)^{-1}\Sigma_{T}(t)(N+\Sigma^{-1})z^{*},(N+\Sigma^{-1})z^{*}\rangle\\
&\qquad+\langle\left(I_{n}+\Sigma_{T}(t)N\right)^{-1}N\widehat{\beta}_{T}(t),\widehat{\beta}_{T}(t)\rangle+\langle(N+\Sigma^{-1})z^{*},\left(I_{n}+\Sigma_{T}(t)N\right)^{-1}\widehat{\beta}_{T}(t)\rangle\bigg]dt\Bigg\}.
\end{aligned}
\end{equation*}
Therefore, we deduce from the above equation that
\begin{equation*}
\begin{aligned}
\frac{1}{T}V_{T}(\xi)=&\frac{1}{T}\mathbb{E}\Bigg\{\int_{0}^{T}\bigg[\langle Q\bar{Y}_{T}(t),\bar{Y}_{T}(t)\rangle+\langle N\bar{Z}_{T}(t),\bar{Z}_{T}(t)\rangle+\langle R\bar{u}_{T}(t),\bar{u}_{T}(t)\rangle+2\langle q,\bar{Y}_{T}(t)\rangle\\
&\quad\qquad+2\langle \mathbf{n},\bar{Z}_{T}(t)\rangle+2\langle r,\bar{u}_{T}(t)\rangle\bigg]dt\Bigg\}\\
=&V+\frac{1}{T}\mathbb{E}\Bigg\{-2\langle\lambda^{*},\bar{Y}_{T}(0)-y^{*}\rangle-2\langle\lambda^{*},\widehat{\varphi}_{T}(0)\rangle-\langle\Sigma\lambda^{*},\lambda^{*}\rangle+\langle(\Sigma-\Sigma_{T}(0))\lambda^{*},\lambda^{*}\rangle\\
&\qquad+\int_{0}^{T}\bigg[\langle Q\widehat{\varphi}_{T}(t),\widehat{\varphi}_{T}(t)\rangle+2\langle Qy^{*}+q,\bar{Y}_{T}(t)-y^{*}\rangle+2\langle(N+\Sigma^{-1})z^{*}+\mathbf{n},\bar{Z}_{T}(t)-z^{*}\rangle\\
&\qquad+2\langle Ru^{*}+r,\bar{u}_{T}(t)-u^{*}\rangle+\langle(N+\Sigma^{-1})z^{*},(I+\Sigma_{T}(t)N)^{-1}(\Sigma-\Sigma_{T}(t))\Sigma^{-1}z^{*}\rangle\\
&\qquad+\langle\left(I_{n}+\Sigma_{T}(t)N\right)^{-1}N\widehat{\beta}_{T}(t),\widehat{\beta}_{T}(t)\rangle+\langle(N+\Sigma^{-1})z^{*},\left(I_{n}+\Sigma_{T}(t)N\right)^{-1}\widehat{\beta}_{T}(t)\rangle\bigg]dt\Bigg\}.
\end{aligned}
\end{equation*}
Thus, it follows from Theorem \ref{th-3}, Lemma \ref{le-7}, and Corollary \ref{co-2} that
\begin{equation*}
\begin{aligned}
\left|\frac{1}{T}V_{T}(\xi)-V\right|\leqslant&\frac{k_{2}}{T}+\frac{k_{2}}{T}\mathbb{E}\left[\int_{0}^{T}|\widehat{\varphi}_{T}(t)|^{2}dt\right]+\frac{k_{2}}{T}\left|\int_{0}^{T}\left(\mathbb{E}[\bar{Y}_{T}(t)]-y^{*}\right)dt\right|\\
&+\frac{k_{2}}{T}\left|\int_{0}^{T}\left(\mathbb{E}[\bar{Z}_{T}(t)]-z^{*}\right)dt\right|+\frac{k_{2}}{T}\left|\int_{0}^{T}\left(\mathbb{E}[\bar{u}_{T}(t)]-u^{*}\right)dt\right|\\
&+\frac{k_{2}}{T}\mathbb{E}\left[\int_{0}^{T}|\Sigma-\Sigma_{T}(t)|dt\right]+\frac{k_{2}}{T}\mathbb{E}\left[\int_{0}^{T}|\widehat{\beta}_{T}(t)|^{2}dt\right]+\frac{k_{2}}{T^{\frac{1}{2}}}\mathbb{E}\left[\int_{0}^{T}|\widehat{\beta}_{T}(t)|^{2}dt\right]^{\frac{1}{2}}\\
\leqslant& \frac{k_{2}}{T}+\frac{k_{2}K_{5}}{2\theta T}+\frac{2k_{2}K_{6}}{T}+k_{2}K_{6}(\frac{1}{T}+\frac{1}{T^{\frac{1}{2}}})+\frac{k_{2}K_{4}}{2\sigma T}+\frac{k_{2}K_{5}}{T}+\frac{k_{2}K_{5}^{\frac{1}{2}}}{T^{\frac{1}{2}}}
\end{aligned}
\end{equation*}
where $k_{2}$ is a positive constant independent of $T$. We let 
$$
K_{2}=(k_{2}+k_{2}K_{5}+3k_{2}K_{6}+\frac{k_{2}K_{5}}{2\theta}+\frac{k_{2}K_{4}}{2\sigma})\bigvee(k_{2}K_{6}+k_{2}K_{5}^{\frac{1}{2}})
$$ and finish our proof.
\end{proof}
\subsection{The strong turnpike property}
In this subsection, we will prove strong turnpike property for Problem (BLQ)$_{T}$. We first introduce the following lemma, which gives an estimate for $X_{T}^{*}$.
\begin{lemma}\label{le-8}
Let (H1)-(H2) hold. There exists a positive constant $K_{7}$ independent of $T$, such that the solution $X_{T}^{*}(\cdot)$ to the SDE \eqref{SDE-2} satisfies
$$
\mathbb{E}|X_{T}^{*}(t)|^{2}\leqslant K_{7},\qquad \forall t\in[0,T].
$$
\end{lemma}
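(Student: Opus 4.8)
The plan is to run a Lyapunov-function argument for the linear SDE \eqref{SDE-2}, using the matrix $\Sigma$ itself as the Lyapunov weight. First I would rewrite \eqref{SDE-2} in the compact form
$$
dX^{*}_{T}(t)=A_{\Sigma}X^{*}_{T}(t)\,dt+\big[C_{\Sigma}X^{*}_{T}(t)-\Sigma^{-1}z^{*}\big]dW_{t},\qquad X^{*}_{T}(0)=0,
$$
with $A_{\Sigma},C_{\Sigma}$ as in \eqref{A,C_Sigma}, and recall from the computation in the proof of Lemma \ref{th-1} the identity
$$
\Sigma A_{\Sigma}+A_{\Sigma}^{\top}\Sigma+C_{\Sigma}^{\top}\Sigma C_{\Sigma}=-\Sigma Q\Sigma-BR^{-1}B^{\top}-C(I_{n}+\Sigma N)^{-1}\Sigma N\Sigma(I_{n}+N\Sigma)^{-1}C^{\top}\leqslant-\Sigma Q\Sigma,
$$
so that this quantity is $\leqslant-2c_{0}I_{n}$ with $c_{0}:=\tfrac12\lambda_{\min}(\Sigma Q\Sigma)>0$, using $\Sigma,Q>0$ from (H2) and Lemma \ref{le-4}.

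Next I would apply It\^o's formula to $t\mapsto\langle\Sigma X^{*}_{T}(t),X^{*}_{T}(t)\rangle$ and take expectation. Since \eqref{SDE-2} has bounded deterministic coefficients, $X^{*}_{T}(\cdot)$ has finite moments of all orders on $[0,T]$, so the stochastic integral is a genuine martingale and drops out; writing $g(t):=\mathbb{E}\langle\Sigma X^{*}_{T}(t),X^{*}_{T}(t)\rangle$, one is left with
$$
\dot g(t)=\mathbb{E}\big\langle(\Sigma A_{\Sigma}+A_{\Sigma}^{\top}\Sigma+C_{\Sigma}^{\top}\Sigma C_{\Sigma})X^{*}_{T}(t),X^{*}_{T}(t)\big\rangle-2\,\mathbb{E}\langle C_{\Sigma}X^{*}_{T}(t),z^{*}\rangle+\langle\Sigma^{-1}z^{*},z^{*}\rangle.
$$
The first term is $\leqslant-2c_{0}\,\mathbb{E}|X^{*}_{T}(t)|^{2}$; the cross term is linear in $X^{*}_{T}$, so I would dominate it by Young's inequality, $2|\mathbb{E}\langle C_{\Sigma}X^{*}_{T}(t),z^{*}\rangle|\leqslant c_{0}\,\mathbb{E}|X^{*}_{T}(t)|^{2}+c_{0}^{-1}|C_{\Sigma}^{\top}z^{*}|^{2}$, paying only a $T$-independent constant. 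This gives a differential inequality $\dot g(t)\leqslant-c_{0}\,\mathbb{E}|X^{*}_{T}(t)|^{2}+C_{1}$ with $C_{1}:=c_{0}^{-1}|C_{\Sigma}^{\top}z^{*}|^{2}+\langle\Sigma^{-1}z^{*},z^{*}\rangle$ independent of $T$.

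Finally, since $\lambda_{\min}(\Sigma)|X^{*}_{T}(t)|^{2}\leqslant\langle\Sigma X^{*}_{T}(t),X^{*}_{T}(t)\rangle\leqslant\lambda_{\max}(\Sigma)|X^{*}_{T}(t)|^{2}$, the estimate becomes $\dot g(t)\leqslant-\tfrac{c_{0}}{\lambda_{\max}(\Sigma)}g(t)+C_{1}$, and because $g(0)=0$ a standard comparison for linear ODEs yields $g(t)\leqslant C_{1}\lambda_{\max}(\Sigma)/c_{0}$ for all $t\in[0,T]$; hence $\mathbb{E}|X^{*}_{T}(t)|^{2}\leqslant g(t)/\lambda_{\min}(\Sigma)\leqslant C_{1}\lambda_{\max}(\Sigma)/(c_{0}\lambda_{\min}(\Sigma))=:K_{7}$. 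I do not expect a genuine obstacle here; the only point to keep an eye on is that $c_{0}$, $C_{1}$ and the eigenvalue ratios of $\Sigma$ are built solely from the fixed data and the ARE solution $\Sigma$, hence independent of $T$ — which is exactly the content of the statement — and the mild technical matter of justifying that the It\^o integral is a true martingale, handled either by invoking the finiteness of moments for linear SDEs or by a routine localization followed by monotone convergence.
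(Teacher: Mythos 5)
Your proof is correct and takes essentially the same route as the paper's: apply It\^o's formula to $t\mapsto\langle\Sigma X^{*}_{T}(t),X^{*}_{T}(t)\rangle$, use the negative definiteness of $\Sigma A_{\Sigma}+A_{\Sigma}^{\top}\Sigma+C_{\Sigma}^{\top}\Sigma C_{\Sigma}$ established in Lemma \ref{th-1}, absorb the cross term with $z^{*}$ by Young's inequality, and close with an ODE comparison from $g(0)=0$. The only differences are cosmetic --- your constant $c_{0}=\tfrac12\lambda_{\min}(\Sigma Q\Sigma)$ versus the paper's $k_{71}=\lambda_{\min}\bigl(-(A_{\Sigma}^{\top}\Sigma+\Sigma A_{\Sigma}+C_{\Sigma}^{\top}\Sigma C_{\Sigma})\bigr)$, and your explicit (and welcome) justification that the stochastic integral is a true martingale.
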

\begin{proof}
Recalling 
$$
A_{\Sigma}=-(A+\Sigma Q)^{\top},\quad C_{\Sigma}=-[C(I_{n}+\Sigma N)^{-1}]^{\top},
$$
applying It\^o's formula to $s\mapsto\langle\Sigma X_{T}^{*}(s),X_{T}^{*}(s)\rangle$ yields,
$$
\begin{aligned}
&\mathbb{E}\langle\Sigma X_{T}^{*}(t),X_{T}^{*}(t)\rangle\\
&=\mathbb{E}\int_{0}^{t}\bigg[\langle\left(A_{\Sigma}^{\top}\Sigma+\Sigma A_{\Sigma}+C_{\Sigma}^{\top}\Sigma C_{\Sigma}\right)X_{T}^{*}(s),X_{T}^{*}(s)\rangle+2\langle C_{\Sigma}X_{T}^{*}(s),z^{*}\rangle+\langle\Sigma^{-1}z^{*},z^{*}\rangle\bigg]dt\\
&\leqslant\mathbb{E}\int_{0}^{t}\bigg[-k_{71}|X_{T}^{*}(s)|^{2}+2|X_{T}^{*}(s)|\cdot|C_{\Sigma}^{\top}z^{*}|+\langle\Sigma^{-1}z^{*},z^{*}\rangle\bigg]ds\\
&\leqslant\mathbb{E}\int_{0}^{t}\bigg[-\frac{k_{71}}{2}|X_{T}^{*}(s)|^{2}+\frac{2}{k_{71}}|C_{\Sigma}^{\top}z^{*}|^{2}+\langle\Sigma^{-1}z^{*},z^{*}\rangle\bigg]ds
\end{aligned}
$$
where $k_{71}>0$ is the smallest eigenvalue of the positive definite matrix 
$-(A_{\Sigma}^{\top}\Sigma+\Sigma A_{\Sigma}+C_{\Sigma}^{\top}\Sigma C_{\Sigma})$, i.e.
\begin{equation}\label{k71}
k_{71}=\lambda_{min}\left(-(A_{\Sigma}^{\top}\Sigma+\Sigma A_{\Sigma}+C_{\Sigma}^{\top}\Sigma C_{\Sigma})\right)
\end{equation}
Since $\Sigma>0$, then we can obtain
$$
\mathbb{E}|X_{T}^{*}(t)|^{2}\leqslant k_{72}\int_{0}^{t}\bigg[-\frac{k_{71}}{2}\mathbb{E}|X_{T}^{*}(s)|^{2}+\frac{2}{k_{71}}|C_{\Sigma}^{\top}z^{*}|^{2}+\langle\Sigma^{-1}z^{*},z^{*}\rangle\bigg]ds
$$
where $k_{72}$ is the largest eigenvalue of $\Sigma^{-1}$, i.e. $k_{72}=\lambda_{\max}(\Sigma^{-1})$. By comparison theorem from standard ODE theory, we finally get
$$
\mathbb{E}|X_{T}^{*}(t)|^{2}\leqslant k_{72}\left[\frac{2}{k_{71}}|C_{\Sigma}^{\top}z^{*}|^{2}+\langle\Sigma^{-1}z^{*},z^{*}\rangle\right]\int_{0}^{t}e^{\frac{k_{71}k_{72}}{2}(s-t)}ds\leqslant \frac{2}{k_{71}}\left[\frac{2}{k_{71}}|C_{\Sigma}^{\top}z^{*}|^{2}+\langle\Sigma^{-1}z^{*},z^{*}\rangle\right],
$$
which completes the proof.
\end{proof}

Finally, we give the proof of Theorem \ref{th-6}.

\begin{proof}[Proof of Theorem \ref{th-6}]
Recalling Remark \ref{re-1}, we have
$$
\begin{aligned}
A_{T}(t)=-\left[A+\Sigma_{T}(t)Q\right]^{\top},\qquad C_{T}(t)=-[C(I_{n}+\Sigma_{T}(t)N)^{-1}]^{\top},\qquad t\in[0,T],
\end{aligned}
$$
are uniformly bounded. Moreover, it follows from Theorem \ref{th-3} that for any $t\in[0,T]$,
$$
\begin{aligned}
&\left|A_{T}(t)-A_{\Sigma}\right|=\left|Q\left(\Sigma_{T}(t)-\Sigma\right)\right|\leqslant k_{31}e^{-2\sigma(T-t)}\\
&\left|C_{T}(t)-C_{\Sigma}\right|=\left|\left(I_{n}+N\Sigma\right)^{-1}N\left(\Sigma_{T}(t)-\Sigma\right)\left(I_{n}+N\Sigma_{T}(t)\right)^{-1}C^{\top}\right|\leqslant k_{31}e^{-2\sigma(T-t)}
\end{aligned}
$$
for some $k_{31},\sigma>0$, independent of $T$. We now introduce the following process
$$
\mathbb{X}_{T}(t)\triangleq\widehat{X}_{T}(t)-\mathbb{E}[\widehat{X}_{T}(t)]-X^{*}_{T}(t),\quad t\in[0,T]
$$
which satisfies $\mathbb{X}_{T}(0)=0$ and
$$
\begin{aligned}
d\mathbb{X}_{T}(t)=&\bigg\{A_{T}(t)\mathbb{X}_{T}(t)+\big[A_{T}(t)-A_{\Sigma}\big]X^{*}_{T}(t)\bigg\}dt+\bigg\{C_{T}(t)\mathbb{X}_{T}(t)+\big[C_{T}(t)-C_{\Sigma}\big]X^{*}_{T}(t)\\
&+C_{T}(t)\mathbb{E}[\widehat{X}_{T}(t)]+N(I+\Sigma_{T}(t)N)^{-1}\big[\Sigma_{T}(t)-\Sigma\big]\Sigma^{-1}z^{*}-N\big(I+\Sigma_{T}(t)N\big)^{-1}\widehat{\beta}_{T}(t)\bigg\}dW_{t}
\end{aligned}
$$
for $t\in[0,T]$. Noting $\mathbb{E}[X^{*}_{T}(t)]=\mathbb{E}[\mathbb{X}_{T}(t)]=0$ for any $t\in[0,T]$, applying Ito's formula yields
\begin{equation}\label{eq-th6-1}
\begin{aligned}
\mathbb{E}\langle\Sigma\mathbb{X}_{T}(t),\mathbb{X}_{T}(t)\rangle=&\mathbb{E}\int_{0}^{t}\bigg\{\langle\big[\Sigma A_{T}(s)+A_{T}(s)^{\top}\Sigma+C_{T}(s)^{\top}\Sigma C_{T}(s)\big]\mathbb{X}_{T}(s),\mathbb{X}_{T}(s)\rangle\\
&-2\langle\Sigma C_{T}(s)\mathbb{X}_{T}(s),N\big(I+\Sigma_{T}(s)N\big)^{-1}\widehat{\beta}_{T}(s)\rangle\\
&+\langle C_{T}(s)^{\top}\Sigma C_{T}(s)\mathbb{E}[\widehat{X}_{T}(s)],\mathbb{E}[\widehat{X}_{T}(s)]\rangle\\
&+2\langle\big[\Sigma(A_{T}(s)-A_{\Sigma})+C_{T}(s)^{\top}\Sigma(C_{T}(s)-C_{\Sigma})\big]X^{*}_{T}(s),\mathbb{X}_{T}(s)\rangle+h_{T}(s)\bigg\}ds
\end{aligned}
\end{equation}
where
$$
\begin{aligned}
h_{T}(s)=&\langle\Sigma(C_{T}(s)-C_{\Sigma})X^{*}_{T}(s),(C_{T}(s)-C_{\Sigma})X^{*}_{T}(s)\rangle\\
&+2\langle\Sigma C_{T}(s)\mathbb{E}[\widehat{X}_{T}(s)],N(I+\Sigma_{T}(s)N)^{-1}\big[\Sigma_{T}(s)-\Sigma\big]\Sigma^{-1}z^{*}\rangle\\
&-2\langle\Sigma(C_{T}(s)-C_{\Sigma})X^{*}_{T}(s)+\Sigma C_{T}(s)\mathbb{E}[\widehat{X}_{T}(s)],N\big(I+\Sigma_{T}(s)N\big)^{-1}\widehat{\beta}_{T}(s)\rangle\\
&+\left|\Sigma^{\frac{1}{2}}N(I+\Sigma_{T}(s)N)^{-1}\big[\Sigma_{T}(s)-\Sigma\big]\Sigma^{-1}z^{*}-\Sigma^{\frac{1}{2}}N\big(I+\Sigma_{T}(s)N\big)^{-1}\widehat{\beta}_{T}(s)\right|^{2}.
\end{aligned}
$$
Meanwhile, we also have
\begin{equation}\label{eq-th6-2}
\begin{aligned}
\langle\Sigma\mathbb{E}[\widehat{X}_{T}(t)],\mathbb{E}[\widehat{X}_{T}(t)]\rangle=\langle\Sigma\lambda^{*},\lambda^{*}\rangle+\int_{0}^{t}&\bigg[\langle\big[\Sigma A_{T}(s)+A_{T}(s)^{\top}\Sigma\big]\mathbb{E}[\widehat{X}_{T}(s)],\mathbb{E}[\widehat{X}_{T}(s)]\rangle\\
&-2\langle\Sigma Q\mathbb{E}[\widehat{\varphi}_{T}(s)],\mathbb{E}[\widehat{X}_{T}(s)]\rangle \bigg]ds.
\end{aligned}
\end{equation}
Therefore, by combining \eqref{eq-th6-1} and \eqref{eq-th6-2}, we obtain
\begin{equation}\label{eq-th6-3}
\begin{aligned}
\mathbb{E}&\langle\Sigma\mathbb{X}_{T}(t),\mathbb{X}_{T}(t)\rangle+\langle\Sigma\mathbb{E}[\widehat{X}_{T}(t)],\mathbb{E}[\widehat{X}_{T}(t)]\rangle\\
&=\langle\Sigma\lambda^{*},\lambda^{*}\rangle+\mathbb{E}\int_{0}^{t}\bigg\{\langle\big[\Sigma A_{T}(s)+A_{T}(s)^{\top}\Sigma+C_{T}(s)^{\top}\Sigma C_{T}(s)\big]\mathbb{X}_{T}(s),\mathbb{X}_{T}(s)\rangle\\
&\quad+\langle\big[\Sigma A_{T}(s)+A_{T}(s)^{\top}\Sigma+C_{T}(s)^{\top}\Sigma C_{T}(s)\big]\mathbb{E}[\widehat{X}_{T}(s)],\mathbb{E}[\widehat{X}_{T}(s)]\rangle\\
&\quad-2\langle\Sigma C_{T}(s)\mathbb{X}_{T}(s),N\big(I+\Sigma_{T}(s)N\big)^{-1}\widehat{\beta}_{T}(s)\rangle-2\langle\Sigma Q\mathbb{E}[\widehat{\varphi}_{T}(s)],\mathbb{E}[\widehat{X}_{T}(s)]\rangle\\
&\quad+2\langle\big[\Sigma(A_{T}(s)-A_{\Sigma})+C_{T}(s)^{\top}\Sigma(C_{T}(s)-C_{\Sigma})\big]X^{*}_{T}(s),\mathbb{X}_{T}(s)\rangle+h_{T}(s)\bigg\}ds.
\end{aligned}
\end{equation}
Noting that the positive constant $k_{71}$ is the smallest eigenvalue of the positive definite matrix 
$$-(A_{\Sigma}^{\top}\Sigma+\Sigma A_{\Sigma}+C_{\Sigma}^{\top}\Sigma C_{\Sigma}),
$$
we get
$$
\begin{aligned}
\mathbb{E}&\langle\big[\Sigma A_{T}(s)+A_{T}(s)^{\top}\Sigma+C_{T}(s)^{\top}\Sigma C_{T}(s)\big]\mathbb{X}_{T}(s),\mathbb{X}_{T}(s)\rangle\\
&=\mathbb{E}\langle\big[\Sigma A_{\Sigma}+A_{\Sigma}^{\top}\Sigma+C_{\Sigma}^{\top}\Sigma C_{\Sigma}\big]\mathbb{X}_{T}(s),\mathbb{X}_{T}(s)\rangle+\mathbb{E}\langle\Sigma\left(A_{T}(s)-A_{\Sigma}\right)\mathbb{X}_{T}(s),\mathbb{X}_{T}(s)\rangle\\
&\quad+\mathbb{E}\langle\left(A_{T}(s)-A_{\Sigma}\right)^{\top}\Sigma\mathbb{X}_{T}(s),\mathbb{X}_{T}(s)\rangle+\mathbb{E}\langle C_{T}(s)^{\top}\Sigma \left(C_{T}(s)-C_{\Sigma}\right)\mathbb{X}_{T}(s),\mathbb{X}_{T}(s)\rangle\\
&\quad+\mathbb{E}\langle\left(C_{T}(s)-C_{\Sigma}\right)^{\top}\Sigma C_{\Sigma}\mathbb{X}_{T}(s),\mathbb{X}_{T}(s)\rangle\\
&\leqslant\left(-k_{71}+k_{32}e^{-2\sigma(T-s)}\right)\mathbb{E}[|\mathbb{X}_{T}(s)|^{2}]
\end{aligned}
$$
and
$$
\begin{aligned}
\langle\big[\Sigma A_{T}(s)+A_{T}(s)^{\top}\Sigma+C_{T}(s)^{\top}\Sigma C_{T}(s)\big]\mathbb{E}[\widehat{X}_{T}(s)],\mathbb{E}[\widehat{X}_{T}(s)]\rangle\leqslant\left(-k_{71}+k_{32}e^{-2\sigma(T-s)}\right)|\mathbb{E}[\widehat{X}_{T}(s)]|^{2}
\end{aligned}
$$
where $k_{32}$ is a positive constant independent of $T$ and $k_{71}$ is defined in \eqref{k71}. Furthermore, by Theorem \ref{th-4}, it holds that
$$
\langle\big[\Sigma A_{T}(s)+A_{T}(s)^{\top}\Sigma+C_{T}(s)^{\top}\Sigma C_{T}(s)\big]\mathbb{E}[\widehat{X}_{T}(s)],\mathbb{E}[\widehat{X}_{T}(s)]\rangle\\
\leqslant 4k_{32}K_{1}^{2}e^{-2\sigma(T-s)}.
$$
On the other hand, by the Cauchy-Schwarz inequality, we have
$$
\begin{aligned}
&\mathbb{E}[-2\langle\Sigma C_{T}(s)\mathbb{X}_{T}(s),N\big(I+\Sigma_{T}(s)N\big)^{-1}\widehat{\beta}_{T}(s)\rangle]\\
&\quad\leqslant\frac{k_{71}}{4}\mathbb{E}[|\mathbb{X}_{T}(s)|^{2}]+\frac{4}{k_{71}}\mathbb{E}[|C_{T}(s)^{\top}\Sigma N\big(I+\Sigma_{T}(s)N\big)^{-1}\widehat{\beta}_{T}(s)|^{2}]\\
&\quad\leqslant\frac{k_{71}}{4}\mathbb{E}[|\mathbb{X}_{T}(s)|^{2}]+k_{33}\mathbb{E}[|\widehat{\beta}_{T}(s)|^{2}],\\
&\mathbb{E}[2\langle\big[\Sigma(A_{T}(s)-A_{\Sigma})+C_{T}(s)^{\top}\Sigma(C_{T}(s)-C_{\Sigma})\big]X^{*}_{T}(s),\mathbb{X}_{T}(s)\rangle]\\
&\quad\leqslant\frac{k_{71}}{4}\mathbb{E}[|\mathbb{X}_{T}(s)|^{2}]+\frac{4}{k_{71}}\mathbb{E}[|\big(\Sigma(A_{T}(s)-A_{\Sigma})+C_{T}(s)^{\top}\Sigma(C_{T}(s)-C_{\Sigma})\big)X^{*}_{T}(s)|^{2}]\\
&\quad\leqslant\frac{k_{71}}{4}\mathbb{E}[|\mathbb{X}_{T}(s)|^{2}]+k_{34}e^{-4\sigma(T-s)},\\
&\mathbb{E}[-2\langle\Sigma Q\mathbb{E}[\widehat{\varphi}_{T}(s)],\mathbb{E}[\widehat{X}_{T}(s)]\rangle]\leqslant\mathbb{E}[|\widehat{\varphi}_{T}(s)|^{2}]+\mathbb{E}[|Q\Sigma\widehat{X}_{T}(s)|^{2}]\leqslant K_{5}e^{-2\theta(T-t)}+k_{35}\mathbb{E}[|\widehat{X}_{T}(s)|^{2}],\\
&\mathbb{E}[h_{T}(s)]\leqslant2\mathbb{E}[|\Sigma^{\frac{1}{2}}(C_{T}(s)-C_{\Sigma})X^{*}_{T}(s)|^{2}]+2|\Sigma^{\frac{1}{2}}C_{T}(s)\mathbb{E}[\widehat{X}_{T}(s)]|^{2}\\
&\qquad\qquad\qquad+2|\Sigma^{\frac{1}{2}}N(I+\Sigma_{T}(s)N)^{-1}\big[\Sigma_{T}(s)-\Sigma\big]\Sigma^{-1}z^{*}|^{2}+4\mathbb{E}[|\Sigma^{\frac{1}{2}}N\big(I+\Sigma_{T}(s)N\big)^{-1}\widehat{\beta}_{T}(s)|^{2}]\\
&\qquad\quad~~\leqslant k_{36}e^{-4\sigma(T-s)}+k_{36}\mathbb{E}[|\widehat{X}_{T}(s)|^{2}]+k_{36}\mathbb{E}[|\widehat{\beta}_{T}(s)|^{2}]
\end{aligned}
$$
for some constants $k_{33},k_{34},k_{35},k_{36}>0$ independent of $T$. Since it follows from Lemma \ref{le-7} and Theorem \ref{th-4} that
$$
\mathbb{E}[\int_{0}^{T}|\widehat{\beta}_{T}(t)|^{2}dt]\leqslant K_{5},\quad
\int_{0}^{T}\mathbb{E}[|\widehat{X}_{T}(t)|^{2}]dt\leqslant 2K_{1}^{2}\int_{0}^{T}(e^{-2\mu t}+e^{-2\mu(T-t)})dt\leqslant\frac{2K_{1}^{2}}{\mu},
$$
we obtain from \eqref{eq-th6-3} that
$$
\begin{aligned}
\mathbb{E}[|\mathbb{X}_{T}(t)|^{2}]\leqslant k_{37}+\int_{0}^{t}\left[\left(k_{32}e^{-2\sigma(T-s)}-\frac{k_{71}k_{72}}{2}\right)\mathbb{E}[|\mathbb{X}_{T}(s)|^{2}]+k_{37}e^{-2\theta(T-s)}\right]ds.
\end{aligned}
$$
where $k_{72}=\lambda_{\max}(\Sigma^{-1})$ and $k_{37}>0$ is independent of $T$. By comparison theorem from standard ODE theory, we have
$$
\mathbb{E}[|\mathbb{X}_{T}(t)|^{2}]\leqslant k_{38}[e^{-\eta t}+e^{-\eta(T-t)}]
$$
where $\eta=\frac{k_{71}k_{72}}{2}\bigwedge2\theta$ and $k_{38}>0$ is still independent of $T$. Therefore, we have
$$
\begin{aligned}
\mathbb{E}\left[|\bar{X}_{T}(t)-\bar{X}^{*}_{T}(t)|^{2}\right]&=\mathbb{E}\left[|\mathbb{X}_{T}(t)+\mathbb{E}[\widehat{X}_{T}(t)]|^{2}\right]\\
&\leqslant2\mathbb{E}\left[|\mathbb{X}_{T}(t)|^{2}\right]+2\mathbb{E}\left[|\mathbb{E}[\widehat{X}_{T}(t)]|^{2}\right]\\
&\leqslant k_{39}[e^{-\zeta t}+e^{-\zeta(T-t)}]
\end{aligned}
$$
where $\zeta=\eta\bigwedge2\mu$ and $k_{39}=2k_{38}+4K_{1}^{2}$. Finally, we get
$$
\begin{aligned}
\bar{Y}_{T}(t)-\bar{Y}^{*}_{T}(t)=&\widehat{Y}_{T}(t)+y^{*}+\Sigma X^{*}_{T}(t)-y^{*}\\
=&-\Sigma_{T}(t)\widehat{X}_{T}(t)-\widehat{\varphi}_{T}(t)+\Sigma X^{*}_{T}(t)\\
=&-\Sigma_{T}(t)\left(\widehat{X}_{T}(t)-X^{*}_{T}(t)\right)-\widehat{\varphi}_{T}(t)+\left(\Sigma-\Sigma_{T}(t)\right)X^{*}_{T}(t)\\
=&-\Sigma_{T}(t)\left(\bar{X}_{T}(t)-\bar{X}^{*}_{T}(t)\right)-\widehat{\varphi}_{T}(t)+\left(\Sigma-\Sigma_{T}(t)\right)X^{*}_{T}(t),\\
\bar{u}_{T}(t)-\bar{u}^{*}_{T}(t)=&\widehat{u}_{T}(t)+u^{*}-R^{-1}B^{\top}X^{*}_{T}(t)-u^{*}\\
=&R^{-1}B^{\top}\left(\widehat{X}_{T}(t)-X^{*}_{T}(t)\right)\\
=&R^{-1}B^{\top}\left(\bar{X}_{T}(t)-\bar{X}^{*}_{T}(t)\right)\\
\end{aligned}
$$
and finish our proof.
\end{proof}

\end{document}